\newtheorem{thm}{Theorem}
\newtheorem{lem}{Lemma}
\newtheorem{cor}{Corollary}
\theoremstyle{definition}
\newtheorem{assumptionA}{A-\hspace{-1.2mm}}
\newtheorem{example}{Example}[section]
\theoremstyle{remark}
\newtheorem{remark}{Remark}
\begin{document}

\title{On Tamed Milstein Schemes of SDEs Driven by L\'{e}vy Noise}
\author{Chaman Kumar and Sotirios Sabanis \\ \small School of Mathematics,  University of Edinburgh, \\ \small Edinburgh, EH9 3JZ, U.K.}

\pagenumbering{arabic}
\date{\today}
\maketitle
\begin{abstract}
We extend the taming techniques developed in \cite{konstantinos2014,sabanis2013} to construct explicit Milstein schemes that numerically approximate L\'evy driven stochastic differential equations with super-linearly growing drift coefficients. The classical rate of convergence is recovered when the first derivative of the drift coefficient satisfies a polynomial Lipschitz condition.
\bigskip

\noindent

\noindent {\it Keywords}: Explicit Milstein scheme, super-linear growth, rate of convergence, SDEs driven by L\'{e}vy noise.

\noindent {\it AMS subject classifications}: Primary 60H35; secondary 65C30.

\bigskip

\end{abstract}

\section{Introduction}
The models incorporating event-driven uncertainties are increasingly becoming popular in  finance, economics, engineering, medical sciences, ecology  and many other branches of sciences. For example, in finance, often the stock price movements are suddenly and significantly influenced by market crashes, market booms, announcements made by central banks, changes in credit ratings, defaults, etc. The stochastic differential equations (SDEs) driven by L\'{e}vy noise are more realistic models to be used in such event-driven phenomena. Some of the references are \cite{cont2004},  \cite{oksendal2005}, \cite{situ2005}  and references therein.

The L\'evy driven SDEs rarely  possess explicit solutions, that necessitates the development of numerical schemes to approximate their  solutions.  Recently, many explicit and implicit numerical schemes of L\'{e}vy driven SDEs  have been studied and their convergence in  strong as well as in weak sense have been proved. The interested reader may refer to \cite{bruti2007}, \cite{Dereich}, \cite{Dereich_Heidenreich}, \cite{higham2005}, \cite{higham2006}, \cite{jacod2005}, \cite{platen2010} along with their references for a comprehensive discussion on numerous numerical schemes of different orders.

In \cite{hutzenthaler2010}, authors have shown that the efficient Euler schemes of SDEs (without jumps) with super-linearly growing drift coefficients diverge in $\mathcal{L}^p$-sense. Several numerical schemes based on taming techniques have been proposed in the past few years to overcome these difficulties. One could refer to  \cite{HJ}, \cite{hutzenthaler2014}, \cite{hutzenthaler2012}, \cite{sabanis2013}, \cite{Tretyakov-Zhang}, \cite{zhang2014} and \cite{sabanis2015} for order $0.5$ tamed Euler schemes and \cite{wanga2013} and \cite{zhang2014} for order $1.0$ tamed Milstein schemes of SDEs (without jumps). For $0.5$ order tamed schemes of SDEs driven by L\'{e}vy noise, the only reference available to us is \cite{konstantinos2014}. The taming techniques proposed in this article can be extended to any desired order of convergence. However, for mathematical simplicity, we discuss order $1.0$ tamed Milstein scheme of L\'{e}vy driven SDEs with super-linearly growing drift coefficients. It can also be concluded that our methodology and techniques can be extended to a tamed scheme of any order of convergence when drift coefficient grows super-linearly. Further, our calculations are motivated by the classical methodology (see also \cite{konstantinos2014}, \cite{sabanis2013}) and hence are more refined than those present in the literature. To the best of our knowledge, this is the first article on Milstein  schemes of SDEs driven by L\'{e}vy noise when drift coefficient grows super-linearly.

We conclude this section by introducing some basic notations. We use $|a|$ to denote the Euclidean norm of a $d$-dimensional vector $a$, whereas $|A|$ for the Hilbert-Schmidt norm of a $d\times m$ matrix $A$. Also, the transpose of the matrix $A$ is denoted  by $A^*$. The inner product of two $d$-dimensional vectors $x$ and $y$ is denoted by $xy$. The $i$th element of a $d$-dimensional vector is denoted by $a^i$, whereas $A^{(i,j)}$ and $A^{(j)}$ stand for $(i,j)$th element and $j$th column of a $d \times m$ matrix $A$ respectively, for $i=1,\ldots,d$, $j=1,\ldots,m$. The notation $\lfloor x \rfloor$ stands for the integer part of a positive real number $x$. Finally, we use $K>0$ to denote a generic constant, which is  independent of $n$ and varies from place to place.

\section{Main Results}
Let $(\Omega, \{\mathscr{F}_t\}_{t \geq 0}, \mathscr{F}, P)$  be a complete filtered probability space satisfying the usual conditions, i.e. the filtration is  right continuous and $\mathscr{F}_0$ contains all $P$-null sets. Suppose that $T>0$ is a  fixed constant. We assume that  $w_t:=(w_t^i)_{i=1}^m$ is an ${\mathbb{R}}^m-$valued  standard Wiener process. Also let $(Z,\mathscr{Z}, \nu)$ be $\sigma$-finite measure space and $N(dt,dz)$ be Poisson random measure defined on $(Z,\mathscr{Z}, \nu)$ with intensity $\nu$ satisfying $\nu(Z)<\infty$. We set $\tilde N(dt,dz):=N(dt,dz)-\nu(dz)dt$. Let $b(x)$ and $\sigma(x)$ be  $\mathscr{B}(\mathbb R^d)$-measurable functions with values in ${\mathbb{R}}^d$ and ${\mathbb{R}}^{d\times m}$ respectively. Also, let $\gamma(x,z)$ be a $\mathscr{B}(\mathbb{R}^d)\otimes \mathscr{Z}$-measurable function with values in $\mathbb{R}^d$. Moreover, $b(x)$, $\sigma(x)$ and $\gamma(x,z)$ are assumed to be twice differentiable functions in $x\in \mathbb{R}^d$. For the purpose of this article,  we consider the following $d$-dimensional SDE,
\begin{align} \label{eq:sde}
x_t=& \xi + \int_{0}^t b(x_s)ds+\int_{0}^t \sigma(x_s)dw_s+ \int_{0}^t\int_{Z}\gamma(x_s,z)\tilde N(ds,dz),
\end{align}
almost surely for any $t \in [0,T] $, where $\xi$ is an $\mathscr{F}_{0}$-measurable  random variable in $\mathbb{R}^d$.
\begin{remark}
In the right hand side of the above equation \eqref{eq:sde}, we write $x_t$ instead of $x_{t-}$ in order to ease the notation. This does not cause any problem, since the compensator of the martingales driving the equation are continuous. This notational convenience shall be followed throughout this article.
\end{remark}

We make the following assumptions.

\begin{assumptionA} \label{as:sde:ini}
For a fixed $p\geq 2$, $E|\xi|^p < \infty$.
\end{assumptionA}

\begin{assumptionA} \label{as:sde:lipschitz}
There exists a constant $L>0$ such that
$$
(x-\bar{x})(b(x)-b(\bar{x}))\vee |\sigma(x)-\sigma(\bar{ x})|^2 \vee \int_Z|\gamma(x,z)-\gamma(\bar{x},z)|^2\nu(dz) \leq L|x-\bar{x}|^2
$$
for any $x, \bar{x} \in \mathbb{R}^d$.
\end{assumptionA}

\begin{assumptionA} \label{as:sde:lip:der}
There exists a constant $L>0$ such that
\begin{align*}
\big|\frac{\partial \sigma^{(i,j)}(x)}{\partial x^k}  -\frac{\partial \sigma^{(i,j)}(\bar{x})}{\partial x^k}\big|^2 \vee \int_Z \big|\frac{\partial \gamma^{i}(x,z)}{\partial x^k}  -\frac{\partial \gamma^{i}(\bar{x},z)}{\partial x^k}\big|^2\nu(dz) \leq L|x-\bar{x}|^2
\end{align*}
for any $x, \bar{x} \in \mathbb{R}^d$  and $i,k=1,\ldots,d$ and $j=1,\ldots,m$.
\end{assumptionA}

\begin{assumptionA} \label{as:sde:growth-p:jump}
There exists a constant $L>0$ such that
$$
\int_Z |\gamma(x,z)|^p\nu(dz) \leq L (1+|x|^p)
$$
for any $x \in \mathbb{R}^d$.
\end{assumptionA}

\begin{assumptionA} \label{as:sde:ploylip}
There exist  constants $L>0$ and $\chi>0$ such that
$$
\big|\frac{\partial b^i(x)}{\partial x^k}-\frac{\partial b^i(\bar{x})}{\partial x^k}\big| \leq L(1+|x|^{\chi}+|\bar{x}|^{\chi})|x-\bar{x}|
$$
for any $x,\bar{x}  \in \mathbb{R}^d$, $i,k=1,\ldots,d$.
\end{assumptionA}

The main results of this article on the convergence of the newly proposed tamed Milstein scheme, which is formally introduced in  Section \ref{sec:tamed:milstein}, follows.

\begin{thm} \label{thm:main:thm}
Let Assumptions A-1 to  A-5 hold with $p \geq 6(\chi+2)$. Then, the tamed Milstein scheme \eqref{eq:milstein} converges to the solution of SDE \eqref{eq:sde} in $\mathcal{L}^2$ with rate of convergence arbitrarily close to $1$, i.e.,
$$
\sup_{0 \leq t \leq T}E|x_t-x_t^n|^2 \leq K n^{-\frac{2}{2+\delta}-1}
$$
with $\delta \in (4/(p-2),1)$.
\end{thm}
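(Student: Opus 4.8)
The plan is to follow the classical three-step programme for strong convergence of numerical schemes, adapted to the super-linear and jump setting. The first step is to establish uniform-in-$n$ moment bounds of order $p$ for the continuous-time interpolation of the tamed Milstein scheme \eqref{eq:milstein}, together with the moment bound $\sup_{t\le T}E|x_t|^p<\infty$ for the exact solution of \eqref{eq:sde} (which follows from Assumptions A-1, A-2 and A-4 by standard coercivity arguments); the taming of the drift is precisely what prevents the scheme's moments from blowing up, while Assumption A-4 controls the jump contribution. The second step is to write the error $e_t:=x_t-x_t^n$ as an Itô process and apply Itô's formula for semimartingales with jumps to $|e_t|^2$. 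The third step is to bound the drift of $E|e_t|^2$ by $K\,E|e_t|^2+(\text{remainder})$, where the remainder is of order $n^{-\frac{2}{2+\delta}-1}$, and then to conclude via Gronwall's inequality.

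Carrying out the second step, I would split each coefficient difference appearing in $de_t$ into three pieces, writing (with $\kappa_n(t)$ the last grid point before $t$ and $b_n$ the tamed drift)
\[
b(x_t)-b_n(x^n_{\kappa_n(t)})=\big(b(x_t)-b(x_t^n)\big)+\big(b(x_t^n)-b(x^n_{\kappa_n(t)})\big)+\big(b(x^n_{\kappa_n(t)})-b_n(x^n_{\kappa_n(t)})\big),
\]
and analogously for $\sigma$ and $\gamma$, whose discretization differences additionally carry the first-order Milstein corrections (including the mixed Wiener--Poisson double integrals). The first piece is handled by the monotonicity/Lipschitz Assumption A-2, giving a contribution $\le K|e_t|^2$ once paired with $e_t$; this is the term that feeds Gronwall. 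The taming piece is controlled by the explicit form of $b_n$ together with the moment bounds from the first step, contributing at the required order in $n$. The substantive work lies in the middle, \emph{discretization}, piece.

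The main obstacle is the estimation of this discretization remainder for the super-linearly growing drift while still recovering the order-one rate. Here I would Taylor-expand $b$, $\sigma$ and $\gamma$ between $x^n_{\kappa_n(t)}$ and $x_t^n$: the first-order terms are exactly the Milstein corrections built into \eqref{eq:milstein}, so they cancel, leaving only second-order Taylor remainders. Using Assumption A-5 (polynomial Lipschitz continuity of $\nabla b$) and Assumption A-3 (Lipschitz continuity of $\nabla\sigma$ and $\nabla\gamma$), these remainders are bounded by $K\big(1+|x^n_{\kappa_n(t)}|^{\chi}+|x_t^n|^{\chi}\big)|x_t^n-x^n_{\kappa_n(t)}|^2$ and the corresponding $\nu$-integrated jump quantities. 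The crux is then to show that $E|x_t^n-x^n_{\kappa_n(t)}|^2$, and its version weighted by the polynomial factor, is of order $n^{-1}$ uniformly over the grid; combining this one-step increment estimate with repeated Hölder and Burkholder--Davis--Gundy inequalities against the high moment bound is where the requirement $p\ge 6(\chi+2)$ and the admissible range $\delta\in(4/(p-2),1)$ arise, balancing the taming error against the Milstein remainder. Once these estimates are assembled and substituted into the Itô drift, Gronwall's inequality yields the stated bound $\sup_{0\le t\le T}E|x_t-x_t^n|^2\le K\,n^{-\frac{2}{2+\delta}-1}$.
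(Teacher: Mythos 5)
Your skeleton --- uniform moment bounds for the scheme, It\^o's formula applied to $|e_t^n|^2$, the three-way splitting of the drift difference and two-way splittings of the diffusion and jump differences, and Gronwall --- is the paper's, and your treatment of $\sigma$ and $\gamma$ (stochastic Taylor expansion in which the corrections built into the scheme cancel the first-order terms, with Assumption A-3 controlling the replacement remainders and the weighted one-step estimates) is exactly the content of Lemmas \ref{lem:b-tilde b:rate} and \ref{c-tilde c:rate}. However, there is a genuine gap in your treatment of the middle drift term $E\int_0^t e_s^n\,(b(x_s^n)-b(x_{\kappa(n,s)}^n))\,ds$. The scheme \eqref{eq:milstein} carries Milstein corrections only in $\tilde\sigma$ and $\tilde\gamma$; its drift is the plain $\tilde b^n(x^n_{\kappa(n,s)})$, so when you expand $b$ between $x^n_{\kappa(n,s)}$ and $x_s^n$ there is \emph{no first-order correction to cancel against}, contrary to your claim. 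Your fallback --- showing that $E\big[(1+|x^n_{\kappa(n,t)}|^{\chi}+|x^n_t|^{\chi})^2|x_t^n-x^n_{\kappa(n,t)}|^2\big]=O(n^{-1})$ and pairing with $e_s^n$ via Young/Cauchy--Schwarz --- then yields a Gronwall remainder of order $n^{-1}$ only, i.e.\ strong order $1/2$, not the claimed $n^{-\frac{2}{2+\delta}-1}$. Recovering order one requires exploiting the martingale structure of the first-order increment, which is the entire point of the paper's Lemma \ref{lem:a-tilde a:rate}: expand $b(x_s^n)-b(x^n_{\kappa(n,s)})$ by It\^o's formula over $[\kappa(n,s),s]$, write $e_s^n=e^n_{\kappa(n,s)}+(\text{increments})$, use that the pairing of the $\mathscr{F}_{\kappa(n,s)}$-measurable part with the Brownian and compensated Poisson increments contributes nothing at leading order, and estimate the remaining cross terms one by one (the terms $F_1$--$F_5$ and $F_{31}$--$F_{34}$ there).

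Relatedly, you attribute the exponent $-\frac{2}{2+\delta}-1$ and the constraints $p\geq 6(\chi+2)$, $\delta\in(4/(p-2),1)$ to ``balancing the taming error against the Milstein remainder,'' but the taming error is harmless: Lemma \ref{lem:form:rate} gives $\sup_{t}E|b(x_t^n)-\tilde b^n(x_t^n)|^2\leq Kn^{-2}$. The $\delta$-loss comes from the jump-driven correction terms: by Lemmas \ref{lem:b2:rate}, \ref{lem:b3:rate}, \ref{lem:c2:rate} and \ref{lem:c3:rate}, the quantities $\sigma_2,\sigma_3,\gamma_2,\gamma_3$ satisfy only $E|\cdot|^q\leq Kn^{-1}$ (not $Kn^{-q/2}$), so in terms such as $E\big[(1+|x_r^n|^\chi)^2|\sigma_i(x^n_{\kappa(n,r)})|^2\big]$ one must apply H\"older with exponents $\frac{2+\delta}{2}$ and $\frac{2+\delta}{\delta}$, obtaining $(E|\sigma_i|^{2+\delta})^{2/(2+\delta)}\leq Kn^{-\frac{2}{2+\delta}}$, while the conjugate factor requires $E(1+|x_r^n|^{\chi})^{\frac{2(2+\delta)}{\delta}}<\infty$ --- this is precisely where the moment condition and the admissible range of $\delta$ enter. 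Without this mechanism, the argument as you describe it cannot produce the stated exponent.
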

\begin{remark}
If $E|\xi|^p < \infty$ for all $p>0$, then one observes that the $\delta>0$ appearing in Theorem \ref{thm:main:thm} can be as small as one wishes and  the obtained rate is arbitrarily close to $1.0$ (which agrees with the classical results, see \cite{platen2010}).
\end{remark}
The proof of Theorem \ref{thm:main:thm} can be found in Section \ref{sec:rate}. When one considers only the continuous case, then the rate is improved as stated in the theorems below.
\begin{thm} \label{thm:main:theorem}
Let Assumptions A-1 to
A-5 ($\gamma\equiv0$) with $p \geq 6(\chi+2)$ hold. Then, the tamed Milstein scheme \eqref{eq:milstein:continuous} converges to the solution of SDE \eqref{eq:sde:continuous} in $\mathcal{L}^q$ with rate $1$, i.e.
$$
\sup_{0 \leq t \leq T}E|x_t-x_t^n|^q \leq K n^{-q}
$$
where $0 <q \leq \max(2, p/(3\chi+6)-2)$.
\end{thm}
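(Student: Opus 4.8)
The plan is to run the classical $\mathcal{L}^q$ argument for strong convergence, controlling the error process $e_t:=x_t-x_t^n$ via Itô's formula and Gronwall's inequality, with Assumption A-2 (monotonicity/one–sided Lipschitz) absorbing the drift and the Milstein correction annihilating the leading–order diffusion error. Before the main estimate I would take for granted, from the preceding sections, the moment bound $\sup_{0\le t\le T}E|x_t|^p<\infty$ for the true solution of \eqref{eq:sde:continuous} together with the uniform-in-$n$ bound $\sup_n\sup_{0\le t\le T}E|x_t^n|^p<\infty$ for the scheme \eqref{eq:milstein:continuous} — this last bound being precisely what the taming procedure is designed to deliver. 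I would work with the continuous-time interpolation $x_t^n=\xi+\int_0^t b_n(\bar x_s^n)\,ds+\int_0^t\big(\sigma(\bar x_s^n)+\Lambda_s^n\big)\,dw_s$, where $b_n$ is the tamed drift, $\bar x_s^n:=x^n_{\kappa(s)}$ is the step value at the last grid point $\kappa(s)\le s$, and $\Lambda_s^n$ is the first-order Milstein correction assembled from the derivatives $\partial\sigma/\partial x^k(\bar x_s^n)$, $\sigma(\bar x_s^n)$ and the increments $w_s-w_{\kappa(s)}$. A Milstein-type one-step estimate for $E|x_s^n-\bar x_s^n|^{r}$ (controlling the increment in $\mathcal{L}^r$) would also be assumed.

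The core computation applies Itô's formula to $|e_t|^q$ for $q\ge 2$ (the range $0<q<2$ then follows from the case $q=2$ by Lyapunov's inequality, which is what produces the $\max(2,\cdot)$ in the statement). This generates a drift term $q\int_0^t|e_s|^{q-2}e_s\big(b(x_s)-b_n(\bar x_s^n)\big)\,ds$, a diffusion-trace term of order $\int_0^t|e_s|^{q-2}|\sigma(x_s)-\sigma(\bar x_s^n)-\Lambda_s^n|^2\,ds$, and a local martingale vanishing in expectation. I would decompose the drift difference as $b(x_s)-b(x_s^n)$, plus the discretization defect $b(x_s^n)-b(\bar x_s^n)$, plus the taming defect $b(\bar x_s^n)-b_n(\bar x_s^n)$: the first is bounded by $L|e_s|^2$ after pairing with $e_s$ through A-2, while the other two are $O(n^{-1})$ consistency terms estimated through the polynomial-Lipschitz control A-5 on $\partial b/\partial x$ (which forces $b$ to grow like $|x|^{\chi+2}$) together with the one-step bound. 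For the diffusion I would Taylor-expand $\sigma(x_s^n)-\sigma(\bar x_s^n)$ about $\bar x_s^n$; the Lipschitz-derivative bound A-3 controls the first-order remainder, and the leading term $\sum_k\frac{\partial\sigma}{\partial x^k}(\bar x_s^n)\sigma^{(k,\cdot)}(\bar x_s^n)(w_s-w_{\kappa(s)})$ is cancelled \emph{exactly} by $\Lambda_s^n$, leaving a genuinely $O(n^{-1})$ remainder in the relevant $\mathcal{L}^q$ sense.

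Closing the estimate then proceeds by applying Young's inequality in the form $|e_s|^{q-2}A\le\varepsilon|e_s|^q+C_\varepsilon A^{q/2}$ (and $|e_s|^{q-1}Y\le\varepsilon|e_s|^q+C_\varepsilon Y^q$) to each consistency term, taking expectations, bounding the $A^{q/2}$ and $Y^q$ factors by combining the Milstein one-step rate $n^{-q}$ with the polynomial-growth factors of degree $\chi+2$ coming from A-5 and the derivative bounds, and finally invoking Gronwall to obtain $\sup_{0\le t\le T}E|e_t|^q\le Kn^{-q}$. This is exactly where the moment budget binds: a careful accounting of these polynomial factors through the Hölder/Young estimates, matched against the available moment $p$ from A-1, produces the admissible range $q\le p/(3\chi+6)-2$. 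I expect the genuinely delicate step to be the refined local-error analysis of the discretization terms rather than the taming defect: to reach rate $1$ rather than the Euler rate $1/2$ one must exploit the mean-zero martingale structure of $x_s^n-\bar x_s^n$ (so that the $O(n^{-1/2})$ fluctuation contributes only at order $n^{-1}$ once integrated, while its conditional mean is already $O(n^{-1})$), and this must be carried out simultaneously with the Milstein cancellation in the diffusion term and kept uniform in $n$ under taming, all while the $\chi$-dependent polynomial moments stay finite — it is this balancing that both powers the rate and fixes the range of $q$.
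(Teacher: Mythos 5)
Your proposal follows essentially the same route as the paper: Itô's formula applied to $|e^n_t|^q$, the three-way splitting of the drift error into the one-sided-Lipschitz part, the discretization defect $b(x^n_s)-b(x^n_{\kappa(n,s)})$, and the taming defect (the paper's Lemmas \ref{lem:form:rate:new}, \ref{lem:b-tilde b:rate:new}, \ref{lem:a-tilde a:rate:new}), with the Milstein correction $\sigma_1$ cancelling the leading Wiener term in the Itô expansion of $\sigma(x^n_t)-\sigma(x^n_{\kappa(n,t)})$ up to an $O(n^{-1})$ remainder controlled by A-3, and the rate-$1$ drift estimate obtained exactly as you describe, by a second Itô expansion of $|e^n_s|^{q-2}e^{n,k}_s$ from the grid point so that the leading cross term has zero expectation, the remaining terms being budgeted by Hölder/Young against the $p$-th moment bounds and closed by Gronwall. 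Your reduction of small $q$ via Lyapunov's inequality and your identification of the moment accounting as the source of the range $q\le p/(3\chi+6)-2$ likewise match the paper's treatment.
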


\begin{thm} \label{thm:main:theorem:sup:inside}
Let Assumptions A-1 to
A-5 ($\gamma\equiv0$) with $p \geq 6(\chi+2)$ hold. Then, the tamed Milstein scheme \eqref{eq:milstein:continuous} converges to the solution of SDE \eqref{eq:sde:continuous} in $\mathcal{L}^q$ with rate $1$, i.e.
$$
E\sup_{0 \leq t \leq T}|x_t-x_t^n|^q \leq K n^{-q}
$$
where $0 < q < \max(2, p/(3\chi+6)-2)$.
\end{thm}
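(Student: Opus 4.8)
The proof parallels that of Theorem~\ref{thm:main:theorem}, the only structural change being that the supremum over $t$ is taken \emph{before} the expectation; this is accommodated by a Burkholder--Davis--Gundy (BDG) inequality at each point where the sup-outside argument merely used that a stochastic integral has zero mean. Since $\gamma\equiv0$ the driving noise is continuous, so BDG applies to continuous martingales and no jump terms intervene. I would also freely use the estimates already furnished by Theorem~\ref{thm:main:theorem}, namely $\sup_{0\le t\le T}E|x_t-x_t^n|^{q'}\le Kn^{-q'}$ for every $0<q'\le\max(2,p/(3\chi+6)-2)$, together with the uniform moment bounds $\sup_n\sup_tE|x_t^n|^p<\infty$ and $\sup_tE|x_t|^p<\infty$ established there.

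First I would write $e_t:=x_t-x_t^n$ and, subtracting the integral equations for $x$ and $x^n$, express $e_t$ as a drift integral plus a stochastic integral whose integrands split into a part proportional to the current error and the genuine Milstein remainder terms (the higher-order Taylor contributions carrying the $n^{-1}$ rate, which I would expand exactly as in the proof of Theorem~\ref{thm:main:theorem} to expose their leading first-order martingale parts). Applying It\^o's formula to $(\,|e_t|^2+\varepsilon)^{q/2}$ — the regularisation letting $\varepsilon\downarrow0$ covering $0<q<2$ as well as $q\ge2$ — then taking $\sup_{0\le t\le T}$ and expectation, I would bound the surviving stochastic integral by BDG,
\[
E\sup_{0\le t\le T}\Big|\int_0^t |e_s|^{q-2}e_s^{*}B_s\,dw_s\Big|\le C\,E\Big(\int_0^T|e_s|^{2q-2}|B_s|^2\,ds\Big)^{1/2},
\]
where $B_s$ is the diffusion difference, and then use Young's inequality to peel a multiple of $E\sup_s|e_s|^q$ off the right-hand side, to be absorbed on the left.

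For the coefficients, the drift difference is handled by the one-sided Lipschitz bound A-2, which turns its stable part into a Gronwall-type contribution $K\int_0^tE\sup_{r\le s}|e_r|^q\,ds$; the diffusion difference is controlled by the Lipschitz bounds A-2 and A-3 on $\sigma$ and $\nabla\sigma$, so that $|B_s|\le K|e_s|$ plus a remainder of at most linear growth, keeping the absorbed term harmless. The polynomial Lipschitz bound A-5 on $\nabla b$ governs the Milstein drift remainder, whose expectation reduces, through H\"older, to products of solution moments with a factor carrying the $n^{-q}$ rate.

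The main obstacle is precisely this moment bookkeeping for the remainder. Since $\nabla b$ grows polynomially (degree $\chi+1$ after integrating A-5) and the Milstein remainder involves second-order expansions of $b$, the relevant integrands carry polynomial factors of degree of order $3(\chi+2)$; the BDG/Young step then requires their $(q+2)$-th moments, i.e. a bound of the form $(q+2)\,3(\chi+2)\le p$, which is exactly $q\le p/(3\chi+6)-2$. The extra sliver of integrability consumed by the maximal inequality — absent in Theorem~\ref{thm:main:theorem}, where the martingale simply vanished under the expectation — is what forces $p\ge6(\chi+2)$ and turns the closed endpoint there into the \emph{strict} bound $q<\max(2,p/(3\chi+6)-2)$ here. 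With every contribution bounded by $Kn^{-q}$ plus $K\int_0^tE\sup_{r\le s}|e_r|^q\,ds$, Gronwall's lemma then yields $E\sup_{0\le t\le T}|e_t|^q\le Kn^{-q}$.
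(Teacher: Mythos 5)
Your overall strategy is genuinely different from the paper's, and it has a gap at its central point. The paper does not redo any maximal estimates: it obtains Theorem \ref{thm:main:theorem:sup:inside} in one step by combining Theorem \ref{thm:main:theorem} with Lemma \ref{lem:yor} (the Lenglart-type domination lemma from \cite{yor}). Writing $q_0:=\max(2,p/(3\chi+6)-2)$, the It\^o/Gronwall computation behind Theorem \ref{thm:main:theorem} shows that $|e_\tau^n|^{q_0}$ is dominated in (conditional) expectation, at every bounded stopping time $\tau$, by a continuous increasing process whose terminal expectation is of order $n^{-q_0}$; Lemma \ref{lem:yor} with $\varsigma=q/q_0\in(0,1)$ then yields $E\sup_{0\le t\le T}|e_t^n|^q\le Kn^{-q}$. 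In particular, the strict inequality $q<q_0$ comes precisely from the requirement $\varsigma<1$ in Lemma \ref{lem:yor}, and not, as you assert, from a moment budget of the form $(q+2)\cdot 3(\chi+2)\le p$ consumed by a maximal inequality; that diagnosis does not correspond to any step either you or the paper actually performs.

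The concrete gap in your direct BDG route is the one-step drift term. After It\^o's formula you must control $\sup_{0\le t\le T}\int_0^t |e_s^n|^{q-2}e_s^n\big(b(x_s^n)-b(x_{\kappa(n,s)}^n)\big)\,ds$ \emph{inside} the expectation. In the sup-outside proof (Lemma \ref{lem:a-tilde a:rate:new}), the leading contribution, namely $|e^n_{\kappa(n,s)}|^{q-2}e^{n}_{\kappa(n,s)}$ multiplied by the stochastic-integral part of the It\^o expansion of $b(x_s^n)-b(x_{\kappa(n,s)}^n)$, is killed because its expectation vanishes (this is exactly the remark ``the first term is zero'' in the estimation of $T_{31}$). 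Once the supremum is inside, this Lebesgue integral must be bounded through $\int_0^T|\cdot|\,ds$, the cancellation is destroyed, and the fallback you propose (H\"older against solution moments) gives for that cross term only $\big(E|e^n_{\kappa(n,s)}|^{q}\big)^{(q-1)/q}\big(E|M_s|^{q}\big)^{1/q}\le Kn^{-(q-1)}\,n^{-1/2}$, where $M_s$ is the increment martingale of order $n^{-1/2}$; integrated over $[0,T]$ this is $Kn^{-q+1/2}$, half an order short of the claimed $n^{-q}$. A repair exists (rewrite $\int_0^t|e^n_{\kappa(n,s)}|^{q-2}e^n_{\kappa(n,s)}M_s\,ds$ via stochastic Fubini as a martingale in $t$ and apply BDG to \emph{that}, recovering the missing $n^{-1/2}$), but nothing of the sort appears in your proposal: the only BDG application you state concerns the diffusion martingale $\int_0^t|e_s^n|^{q-2}e_s^{n*}B_s\,dw_s$, which is the unproblematic term. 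Given that Theorem \ref{thm:main:theorem} is already in hand, the paper's two-line Lenglart argument is both simpler and sidesteps this pitfall entirely.
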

\begin{remark}
If one assumes that $E|\xi|^p<\infty$ is true for all $p>0$ as assumed in \cite{wanga2013}, then  Theorem \ref{thm:main:theorem:sup:inside} holds for any $q>0$, which is in agreement with the findings of \cite{wanga2013}.
\end{remark}
The proofs of Theorems [\ref{thm:main:theorem}, \ref{thm:main:theorem:sup:inside}] as stated above are given in Section \ref{sec:con}.
\subsection{Preliminary Observations}
The proof of the following lemma can be found in \cite{Mik}.
\begin{lem} \label{lem:BGD:jump}
Let $r \geq 2$. There exists a constant $K$, depending only on $r$, such that for every real-valued, $\mathscr{B}([0,T]) \otimes \mathscr Z-$measurable function $g$ satisfying
$$
\int_0^T\int_Z |g_t(z)|^2\nu(dz) dt < \infty
$$
the following estimate holds,
\begin{align*}
E\sup_{0 \leq t \leq T} \Big|\int_0^t \int_Z & g_s(z)\tilde{N}(ds,dz)\Big|^r \leq K E\Big(\int_0^T \int_Z|g_t(z)|^2\nu(dz)dt\Big)^{r/2}
\\
&+K E\int_0^T \int_Z |g_t(z)|^r \nu(dz)dt.
\end{align*}
It is  known  that if $1\leq r \leq 2$, then the second term of the right hand side can be dropped.
\end{lem}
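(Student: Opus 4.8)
The plan is to reduce the whole estimate to a single application of It\^o's formula combined with Doob's maximal inequality, using that $M_t:=\int_0^t\int_Z g_s(z)\tilde N(ds,dz)$ is a purely discontinuous square-integrable martingale whose only jumps are $\Delta M_s=g_s(z)$. First I would fix $r\ge 2$, assume the right-hand side is finite (otherwise there is nothing to prove), and localise by the stopping times $\tau_N=\inf\{t\ge 0:|M_t|\ge N\}\wedge T$ so that all moments below are a priori finite; the resulting bound is uniform in $N$ and one passes to the limit by monotone convergence as $\tau_N\uparrow T$. By Doob's $\mathcal L^r$ inequality it then suffices to control $E|M_{\tau_N}|^r$, since $E\sup_{t\le T}|M_{t\wedge\tau_N}|^r\le (r/(r-1))^r E|M_{\tau_N}|^r$.

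Next I would apply It\^o's formula to $\phi(M_t)$ with $\phi(x)=|x|^r$, which is $C^2$ for $r\ge 2$. Since $M$ has no continuous martingale part, taking expectations annihilates the $\tilde N$-integral and leaves only the compensator,
\[
E|M_t|^r=E\int_0^t\int_Z\big[\,|M_{s-}+g_s(z)|^r-|M_{s-}|^r-\phi'(M_{s-})\,g_s(z)\,\big]\,\nu(dz)\,ds .
\]
The subtraction of the first-order term $\phi'(M_{s-})g$ is automatic here, and it is exactly what makes the integrand amenable to a second-order Taylor estimate: for $r\ge 2$ there is a constant $C_r$ with $\big|\,|x+y|^r-|x|^r-\phi'(x)y\,\big|\le C_r\big(|x|^{r-2}y^2+|y|^r\big)$. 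Inserting this produces two pieces, one of which is already the target term $C_r\,E\int_0^T\int_Z|g|^r\nu(dz)\,ds$.

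The remaining piece is $C_r\,E\int_0^T\int_Z |M_{s-}|^{r-2}|g|^2\,\nu(dz)\,ds$, and closing it in the correct form is the delicate point. Writing $M^*_T:=\sup_{t\le T}|M_t|$ and $V:=\int_0^T\int_Z|g|^2\nu(dz)\,ds$, I would bound $|M_{s-}|^{r-2}\le (M^*_T)^{r-2}$, factor it out, and split by Young's inequality with conjugate exponents $r/(r-2)$ and $r/2$: for every $\epsilon>0$,
\[
E\big[(M^*_T)^{r-2}V\big]\le \epsilon\,E(M^*_T)^r+C_\epsilon\,E\big(V^{r/2}\big).
\]
This is the only step where the power must be kept \emph{outside} the time integral in order to recover the stated form $E\big(\int_0^T\int_Z|g|^2\nu\,dt\big)^{r/2}$; a premature Young step (or a Gronwall argument) would instead generate the strictly weaker quantity $E\int_0^T(\int_Z|g|^2\nu)^{r/2}\,ds$, which is genuinely larger when $g$ is concentrated in time. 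For this reason I deliberately factor out the running maximum rather than the pointwise value. Combining with Doob's inequality to replace $E|M_T|^r$ by $E(M^*_T)^r$ and choosing $\epsilon$ small enough to absorb the $\epsilon\,E(M^*_T)^r$ term on the left then yields $E(M^*_T)^r\le K\big[E(V^{r/2})+E\int_0^T\int_Z|g|^r\nu\big]$, which is the assertion.

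The main obstacle throughout is therefore the bookkeeping that sends each estimate onto the correct one of the two summands, and in particular securing the ``power-outside'' form in the dominant step above; the integrability and localisation issues are routine once the right-hand side is assumed finite. For the borderline range $1\le r\le 2$ the Taylor route is unavailable because $\phi$ is no longer $C^2$, and here I would argue directly from the quadratic variation $[M]_T=\int_0^T\int_Z|g|^2 N(ds,dz)$: the Burkholder--Davis--Gundy bound $E(M^*_T)^r\le C_r\,E[M]_T^{r/2}$ together with the concavity (equivalently subadditivity) of $x\mapsto x^{r/2}$ already controls everything in terms of the quadratic variation alone, so that no second-order contribution — and hence no second summand — is produced.
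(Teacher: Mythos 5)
Your proposal is essentially correct, but note that the paper does not prove this lemma at all: it is quoted verbatim with a pointer to Mikulevicius and Pragarauskas \cite{Mik}, so there is no internal proof to compare against. What you have reconstructed is the standard self-contained proof of such Kunita-type maximal inequalities: localisation, Doob's $\mathcal{L}^r$ inequality, It\^o's formula for $|x|^r$ applied to the purely discontinuous martingale $M$, the second-order Taylor bound (which is literally the paper's inequality \eqref{eq:y1y2}, deployed there in the proof of Lemma \ref{lem:scheme:momentbound}), and the absorption step $E[(M_T^*)^{r-2}V]\le\epsilon E(M_T^*)^r+C_\epsilon E(V^{r/2})$ after factoring out the running maximum. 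Your insistence on keeping the power outside the time integral is exactly the right point of emphasis, since a premature Young or Gronwall step would yield the genuinely weaker quantity $E\int_0^T\big(\int_Z|g_t(z)|^2\nu(dz)\big)^{r/2}dt$. Two routine details deserve a word: $|M_{\tau_N}|$ may overshoot $N$ by the final jump, so the a priori finiteness of moments under localisation uses $E\int_0^T\int_Z|g_t(z)|^r\nu(dz)dt<\infty$ (assumed without loss of generality, as you do); and at $r=2$ your Young exponents $r/(r-2)$, $r/2$ degenerate, but that case is the isometry and needs no absorption.

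The one loose spot is the side remark for $1\le r\le 2$. Writing ``concavity (equivalently subadditivity)'' conflates two estimates that point at different summands: subadditivity of $x\mapsto x^{r/2}$ applied to $[M]_T=\sum_s|\Delta M_s|^2$ gives $E[M]_T^{r/2}\le E\int_0^T\int_Z|g_t(z)|^r\nu(dz)dt$, i.e.\ exactly the second term that is supposed to be dropped. What you need is concavity via Jensen, $E[M]_T^{r/2}\le\big(E[M]_T\big)^{r/2}=\big(E\int_0^T\int_Z|g_t(z)|^2\nu(dz)dt\big)^{r/2}$, and this coincides with the stated right-hand side only because the lemma's $g$ is deterministic ($\mathscr{B}([0,T])\otimes\mathscr{Z}$-measurable). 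Since the paper in fact applies the lemma to random predictable integrands such as $\tilde\gamma(x^n_{\kappa(n,s)},z)$, for which Jensen runs in the wrong direction, the clean way to close this case is the domination inequality the paper records as Lemma \ref{lem:yor} (Revuz--Yor/Lenglart) applied with $X=[M]$, $A=\langle M\rangle$ and $\varsigma=r/2\in(0,1)$, followed by BDG. These are fixable peripheral issues; the core $r\ge 2$ argument, which is the assertion the paper actually relies on, is complete and correct.
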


\begin{remark} \label{as:sde:growth}
Due to Assumption A-2, there exits a constant $L>0$ such that
$$
xb(x)\vee |\sigma(x)|^2 \vee \int_Z|\gamma(x,z)|^2 \nu(dz) \leq L(1+|x|^2)
$$
for any $x \in \mathbb{R}^d$.
\end{remark}

The proof of the following lemma can be found in \cite{konstantinos2014} and \cite{situ2005}.
\begin{lem} \label{lem:sde:momentbound}
Let $b(x)$ be a continuous function in $x \in \mathbb{R}^d$. Also, suppose that Assumptions A-1, A-2  and A-4 are satisfied.  Then, there exists a unique solution to SDE \eqref{eq:sde} and moreover
$$
E\sup_{0 \leq t \leq T}|x_t|^p \leq K,
$$
where $K:=K(L,T,p,E|\xi|^p)$.
\end{lem}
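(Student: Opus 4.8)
The plan is to treat the three assertions — existence, uniqueness, and the moment bound — in turn, with the moment bound being the substantive part.

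For uniqueness I would take two solutions $x_t,\bar x_t$ issued from the same initial datum, apply It\^o's formula to $|x_t-\bar x_t|^2$, and exploit the monotonicity packaged in Assumption A-2: the drift contributes $2(x_s-\bar x_s)(b(x_s)-b(\bar x_s))\le 2L|x_s-\bar x_s|^2$, the Brownian correction contributes $|\sigma(x_s)-\sigma(\bar x_s)|^2\le L|x_s-\bar x_s|^2$, and the jump compensator contributes $\int_Z|\gamma(x_s,z)-\gamma(\bar x_s,z)|^2\nu(dz)\le L|x_s-\bar x_s|^2$, while the two stochastic integrals are mean-zero martingales. Gronwall's inequality then forces $E|x_t-\bar x_t|^2=0$. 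Existence follows from the classical approximation scheme (successive Euler or Picard iterates together with a truncation of $b$ and a limit passage resting on the moment bound below), exactly as in \cite{situ2005} and \cite{konstantinos2014}, which I would cite rather than reproduce.

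For the a priori estimate $\sup_{0\le t\le T}E|x_t|^p\le K$ I would localise with the stopping times $\tau_R:=\inf\{t:|x_t|\ge R\}$ so that all stochastic integrals become true martingales, and then apply It\^o's formula to the smooth function $(1+|x_t|^2)^{p/2}$. The finite-variation part splits into a pure-drift term, bounded by $p(1+|x|^2)^{p/2-1}\,x b(x)\le K(1+|x|^2)^{p/2}$ via the growth bound of Remark~\ref{as:sde:growth}; a Brownian It\^o-correction, bounded similarly through $|\sigma(x)|^2\le L(1+|x|^2)$; and a jump-compensator term $\int_Z[(1+|x+\gamma|^2)^{p/2}-(1+|x|^2)^{p/2}-p(1+|x|^2)^{p/2-1}x\gamma]\,\nu(dz)$, which by a second-order Taylor estimate is dominated by $K\int_Z|\gamma|^2(1+|x|^{p-2}+|\gamma|^{p-2})\nu(dz)$ and hence, using $\int_Z|\gamma|^2\nu(dz)\le L(1+|x|^2)$ together with Assumption A-4 (which supplies exactly the $|\gamma|^p$ growth needed), by $K(1+|x|^p)$. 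Collecting terms, the drift of $(1+|x_t|^2)^{p/2}$ is at most $K(1+|x_t|^2)^{p/2}$; taking expectations kills the martingales, Gronwall's lemma gives a bound uniform in $R$, and Fatou's lemma as $R\to\infty$ yields the claim.

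For the supremum bound $E\sup_{0\le t\le T}|x_t|^p\le K$ I would start from the same It\^o representation, take the supremum over $[0,t]$ and then the expectation. The finite-variation part is again $\le K\int_0^t E\sup_{u\le s}(1+|x_u|^2)^{p/2}\,ds$; I emphasise that this is where the monotone structure of Assumption A-2 is indispensable, since it lets one bound the drift contribution \emph{without} any growth assumption on $|b|$ itself. The Brownian martingale is controlled by the Burkholder--Davis--Gundy inequality, and the resulting $\big(\int_0^t|\nabla(\cdot)\sigma|^2ds\big)^{1/2}$ is split by a Young-type inequality so that a small multiple of $E\sup_{s\le t}(1+|x_s|^2)^{p/2}$ is absorbed on the left while the remainder has the Gronwall form $K\int_0^t E\sup_{u\le s}(\cdot)\,ds$. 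The jump martingale is treated with Lemma~\ref{lem:BGD:jump}, and I expect this to be the main obstacle: the crude $\mathcal L^2$ estimate of the jump increment $(1+|x+\gamma|^2)^{p/2}-(1+|x|^2)^{p/2}$ produces the quantity $\int_Z|\gamma|^{2p}\nu(dz)$, which is \emph{not} controlled by Assumption A-4. The remedy is to estimate the jump contribution at the $\mathcal L^p$ level instead, applying Lemma~\ref{lem:BGD:jump} at order $p$ directly to the original integral $\int_0^t\int_Z\gamma\,\tilde N$, whose two resulting terms $E\big(\int_0^T\int_Z|\gamma|^2\nu\,ds\big)^{p/2}$ and $E\int_0^T\int_Z|\gamma|^p\nu\,ds$ are both finite by Remark~\ref{as:sde:growth}, Assumption A-4, the finiteness $\nu(Z)<\infty$, and the a priori bound already established. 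A concluding application of Gronwall's inequality then closes the estimate.
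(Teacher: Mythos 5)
Your uniqueness argument, your localized It\^o--Gronwall--Fatou derivation of $\sup_{0\le t\le T}E|x_t|^p\le K$, and your insistence on routing the drift through the monotone condition of A-2 (since $|b|$ admits no growth bound) are all sound, and existence by citation is fair game here: the paper itself gives no proof of this lemma, delegating it entirely to \cite{konstantinos2014} and \cite{situ2005}. The genuine gap is in your last step, the upgrade from $\sup_t E|x_t|^p$ to $E\sup_{t\le T}|x_t|^p$. You correctly diagnose that applying Lemma \ref{lem:BGD:jump} (in its $1\le r\le 2$ form) to the jump martingale $\int_0^t\int_Z\{f(x_s+\gamma(x_s,z))-f(x_s)\}\tilde N(ds,dz)$ with $f(x)=(1+|x|^2)^{p/2}$ produces the uncontrolled quantity $\int_Z|\gamma|^{2p}\nu(dz)$. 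But your proposed remedy --- applying Lemma \ref{lem:BGD:jump} at order $p$ to the \emph{original} integral $\int_0^t\int_Z\gamma(x_s,z)\tilde N(ds,dz)$ --- does not connect back to the quantity you must bound. That integral is not the martingale appearing in the It\^o expansion of $f(x_t)$, and finiteness of $E\sup_t\big|\int_0^t\int_Z\gamma\,\tilde N\big|^p$ gives no route to $E\sup_t f(x_t)$: the only way to use it would be an additive decomposition $x_t=y_t+\int_0^t\int_Z\gamma\,\tilde N$, and then the drift contribution to $|y_t|^p$ is estimated through $y_s\,b(x_s)$ rather than $x_s\,b(x_s)$, which destroys exactly the monotone structure you yourself called indispensable. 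As written, the jump-martingale term in the supremum estimate remains unbounded.

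The standard repair --- and the one the paper itself deploys for the scheme in the proof of Lemma \ref{lem:scheme:momentbound}, see \eqref{eq:rr} --- is to write It\^o's formula with the jump contribution pre-split into a \emph{first-order} compensated term $p\int_0^t\int_Z|x_s|^{p-2}x_s\,\gamma(x_s,z)\,\tilde N(ds,dz)$ plus the second-order remainder integrated against the uncompensated measure $N(ds,dz)$. Lemma \ref{lem:BGD:jump} with $r=1$ applied to the first-order term yields $KE\big(\int_0^T\int_Z|x_s|^{2p-2}|\gamma(x_s,z)|^2\nu(dz)\,ds\big)^{1/2}$; one extracts $\sup_s|x_s|^{p-1}$, absorbs $\tfrac18E\sup_s|x_s|^p$ by Young's inequality, and controls the leftover $KE\big(\int_0^T\int_Z|\gamma|^2\nu(dz)\,ds\big)^{p/2}$ via Remark \ref{as:sde:growth}. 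The remainder is bounded through \eqref{eq:y1y2} by $K(|x_s|^{p-2}|\gamma|^2+|\gamma|^p)$; since this integrand is nonnegative, the running supremum of its $N$-integral is its terminal value, whose expectation equals the $\nu$-compensated integral and is controlled by Remark \ref{as:sde:growth}, Assumption A-4 and Young's inequality. In this organisation only second and $p$-th powers of $\gamma$ ever appear, so the problematic $\int_Z|\gamma|^{2p}\nu(dz)$ never arises, and Gronwall closes the estimate exactly as in your treatment of the Brownian term (this mirrors the terms $C_5$ and $C_6$ in the paper's proof of Lemma \ref{lem:scheme:momentbound}).
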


\begin{remark} \label{rem:der:bounded}
Due to Assumptions A-2 and A-3, there exists a constant $L>0$ such that
$$
\Big|\frac{\partial \sigma^{(i,j)}(x)}{\partial x^k}\Big| \vee \Big|\frac{\partial^2 \sigma^{(i,j)}(x)}{\partial x^k \partial x^u}\Big| \vee \int_Z\Big|\frac{\partial \gamma^i(x,z)}{\partial x^k}\Big|\nu(dz) \vee \int_Z\Big|\frac{\partial^2 \gamma^i(x,z)}{\partial x^k \partial x^u}\Big|\nu(dz)\leq L
$$
for any $x \in \mathbb{R}^d$, $i,k,u=1,\ldots,d$ and $j=1,\ldots,m$.
\end{remark}

\begin{remark} \label{as:sde:poly:Der}
Due to Assumption A-5, there exist constants $L>0$ and $\chi>0$ such that
$$
\Big|\frac{\partial b^i(x)}{\partial x^k}\Big| \leq  L(1+|x|^{\chi+1}) \quad \mbox{and} \quad \Big|\frac{\partial^2 b^i(x)}{\partial x^j \partial x^k}\Big|\leq  L(1+|x|^{\chi})
$$
for any $x \in \mathbb{R}^d$ and $i,j, k=1,\ldots,d$.
\end{remark}

\begin{remark} \label{as:sde:poly}
Due to Remark \ref{as:sde:poly:Der}, there exist constants $L>0$ and $\chi>0$ such that
$$
|b(x)-b(\bar{x})| \leq L(1+|x|^{\chi+1}+|\bar{x}|^{\chi+1})|x-\bar{x}|
$$
which further implies that
$$
|b(x)| \leq L(1+|x|^{\chi+2})
$$
for any $x,\bar{x}  \in \mathbb{R}^d$.
\end{remark}
\begin{remark}
One notices that Assumption 3.1 in \cite{wanga2013} implies Assumption A-3 of this article due to the fact that, for a function $f:\mathbb{R}^d \to \mathbb{R}^d$, the condition $\sup_{h_1,h_2 \in \mathbb{R}^d,|h_1|\leq 1, |h_1|\leq 1} |\sum_{i,j=1}^d \frac{\partial^2 f(x)}{\partial x^i \partial x^j}h_1^ih_2^j| \leq K$ implies that $|\frac{\partial^2 f^k(x)}{\partial x^i \partial x^j}|\leq K$ for every $i,j,k=1,\ldots,d$. One can make similar conclusion for Assumption  A-5.  Thus, the assumptions under consideration in this article are not stronger than those in \cite{wanga2013}.
\end{remark}

\begin{remark}
For the practical implementation of the schemes \eqref{eq:milstein} and \eqref{eq:milstein:continuous} which have been introduced in this article, one requires commutative conditions on the coefficients, which have been discussed \cite{kloeden1999} and \cite{platen2010}.
\end{remark}

\begin{remark}
No comparison is made here with \cite{zhang2014} as the main theorem for convergence in that article i.e. Theorem 2.2. p. 4 reproduces the corresponding theorem from \cite{Tretyakov-Zhang}, i.e. Theorem 2.1, p. 3137, which is known to contain an imprecise statement about the moments requirement for the $\mathcal{L}^{2p}$-convergence of their proposed numerical scheme.
\end{remark}

The proof of the following lemma can be found in  \cite{yor}.
\begin{lem} \label{lem:yor}
Let $X$ be a positive, adapted right continuous process and $A$ be a continuous increasing process such that
$$
E[X_\tau| \mathscr{F}_0] \leq E[A_\tau | \mathscr{F}_0]
$$
for any bounded stopping time $\tau$. Then for any $\varsigma \in (0,1)$,
$$
E\big[\big( X_\infty^*\big)^\varsigma\big] \leq \frac{2-\varsigma}{1-\varsigma} E\big[\big( A_\infty\big)^\varsigma\big]
$$
where $X_\infty^*=\sup_{t\geq 0} X_t$.
\end{lem}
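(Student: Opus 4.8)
The plan is to prove this by the classical two-step Lenglart argument: first establish a weak-type (distributional) domination, then integrate it against the measure $\varsigma c^{\varsigma-1}\,dc$ via the layer-cake representation of the $\varsigma$-th moment. Since taking $\mathscr{F}_0$-expectations in the hypothesis yields $E[X_\tau] \le E[A_\tau]$ for every bounded stopping time $\tau$, I would work directly with unconditional expectations throughout.

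First I would prove that for all $c,d>0$,
$$ P(X_\infty^* \ge c) \le \frac{1}{c}\, E[A_\infty \wedge d] + P(A_\infty \ge d). $$
To this end, introduce the hitting times $T_c = \inf\{t \ge 0 : X_t \ge c\}$ and $R_d = \inf\{t \ge 0 : A_t \ge d\}$, and apply the domination hypothesis at the bounded stopping time $T_c \wedge R_d \wedge N$. On the event $\{T_c \le R_d \wedge N\}$ right-continuity of $X$ gives $X_{T_c \wedge R_d \wedge N} = X_{T_c} \ge c$, while continuity of $A$ identifies $A_{T_c \wedge R_d \wedge N} \le A_{R_d} = A_\infty \wedge d$; hence Chebyshev's inequality applied to $c\,\mathbf{1}_{\{T_c \le R_d \wedge N\}} \le X_{T_c \wedge R_d \wedge N}$ yields $c\,P(T_c \le R_d \wedge N) \le E[A_\infty \wedge d]$. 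Letting $N \to \infty$ by monotone convergence and noting the inclusion $\{X_\infty^* \ge c\} \subseteq \{T_c \le R_d\} \cup \{A_\infty \ge d\}$ (since on $\{R_d < T_c < \infty\}$ one necessarily has $A_\infty \ge d$) produces the claimed bound.

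In the second step I would couple the two levels by setting $d=c$ and integrate:
$$ E[(X_\infty^*)^\varsigma] = \int_0^\infty \varsigma c^{\varsigma-1} P(X_\infty^* \ge c)\,dc \le \int_0^\infty \varsigma c^{\varsigma-2} E[A_\infty \wedge c]\,dc + \int_0^\infty \varsigma c^{\varsigma-1} P(A_\infty \ge c)\,dc. $$
The last integral equals exactly $E[A_\infty^\varsigma]$ by the layer-cake formula. For the first, Tonelli's theorem together with the elementary computation $\int_0^\infty c^{\varsigma-2}(a \wedge c)\,dc = a^\varsigma/(\varsigma(1-\varsigma))$ (split at $c=a$, using $\varsigma-1<0$ for convergence at infinity) gives $\frac{1}{1-\varsigma} E[A_\infty^\varsigma]$. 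Adding the two contributions produces the constant $\frac{1}{1-\varsigma}+1 = \frac{2-\varsigma}{1-\varsigma}$, exactly as stated.

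The main obstacle I anticipate lies in the rigorous justification of the weak-type inequality rather than in the final integration: one must handle the bounded-stopping-time restriction by truncating at $N$ and passing to the limit, and carefully exploit right-continuity of $X$ (to ensure $X_{T_c} \ge c$ on $\{T_c < \infty\}$) together with continuity of $A$ (to identify $A_{R_d}$ with $A_\infty \wedge d$ and to guarantee $A_t < d$ strictly before $R_d$, with the case $A_0 > d$ harmlessly absorbed into the term $P(A_\infty \ge d)$). The inequality is of course trivial when $E[A_\infty^\varsigma] = \infty$, so no integrability of $A_\infty$ need be assumed a priori.
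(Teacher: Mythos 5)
Your proposal is correct and is essentially the paper's proof: the paper offers no argument of its own but cites Revuz--Yor, where this is exactly Lenglart's domination inequality, established by the same two steps you give (the weak-type bound obtained by applying the hypothesis at $T_c\wedge R_d\wedge N$, then layer-cake integration with $d=c$, yielding $\frac{1}{1-\varsigma}+1=\frac{2-\varsigma}{1-\varsigma}$). The only hairline points are that the inclusion $\{X_\infty^*\ge c\}\subseteq\{T_c\le R_d\}\cup\{A_\infty\ge d\}$ can fail when the supremum equals $c$ without the level being attained --- use the strict-level version $\{X_\infty^*>c\}\subseteq\{T_c<\infty\}$ or let $c'\uparrow c$, which changes nothing in the integral --- and that absorbing $\{A_0>d\}$ into $P(A_\infty\ge d)$ is cleanest if you retain the $\mathscr{F}_0$-conditional form of the hypothesis on the $\mathscr{F}_0$-event $\{A_0\le d\}$ rather than passing to unconditional expectations at the outset.
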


\section{Tamed Milstein Scheme} \label{sec:tamed:milstein}
For every $n \in \mathbb{N}$, we propose the following form for taming the super-linearly growing drift coefficient of  SDE \eqref{eq:sde},
\begin{equation} \label{eq:an:tilde}
\tilde b^n(x)= \frac{b(x)}{1+n^{-\theta}|b(x)|^{2 \theta }}
\end{equation}
for any $\theta \geq \frac{1}{2}$,  and $x \in \mathbb{R}^d$. One notes that $\tilde b^n$ is an $\mathbb{R}^d$ -valued function and its $i$-th element is denoted by $\tilde b^{n,i}$ for $i=1,\ldots,d$.
\begin{remark}
One observes that when $\theta=\frac{1}{2}$, then we obtain  tamed Euler schemes similar to those discussed in \cite{konstantinos2014} and \cite{sabanis2013}. In this article, we  discuss a tamed Milstein scheme of SDE \eqref{eq:sde} by taking $\theta=1$. It is important to note that by assigning different values to $\theta$ and appropriately including multiple stochastic integrals in the scheme, one could write a tamed scheme and then  perform  calculations similar to the methodology developed in this article to achieve an order $\theta$. For the purpose of this article, we only discuss the case when $\theta=1$ i.e. the tamed Milstein scheme of L\'{e}vy driven SDE with super-linear drift coefficient.  From now onward, throughout this article, we take $\theta=1$.
\end{remark}

\begin{remark} \label{as:scheme:a^n}
For every $n \in \mathbb{N}$, equation \eqref{eq:an:tilde} implies
$$
|\tilde{b}^n(x)| \leq \min(n^{1/2}, |b(x)|)
$$
for any $x \in \mathbb{R}^d$ which is in the same spirit as in \cite{konstantinos2014} and \cite{sabanis2013}.
\end{remark}
For every $n \in \mathbb{N}$, we propose the following  tamed Milstein scheme,
\begin{align}  \label{eq:milstein}
x_t^n =\xi + \int_{0}^t \tilde{b}^n(x_{\kappa(n,s)}^n)ds+\int_{0}^t \tilde{\sigma}(x_{\kappa(n,s)}^n)dw_s+\int_{0}^t\int_Z \tilde{\gamma}(x_{\kappa(n,s)}^n, z_2)\tilde N(ds,dz_2)
\end{align}
almost surely for any $t \in [0,T]$ where  $\kappa(n,t):=\lfloor nt \rfloor/n$.  The drift coefficient $\tilde{b}^n$ of scheme \eqref{eq:milstein} is defined in equation \eqref{eq:an:tilde} whereas $\tilde{\sigma}$ and  $\tilde{\gamma}$ are diffusion and jump coefficients of the scheme \eqref{eq:milstein} which are defined below. For every $n \in \mathbb{N}$ and  $ s \in [0,T]$, the diffusion coefficient $\tilde{\sigma}(x_{\kappa(n,s)}^n)$ of scheme \eqref{eq:milstein} is given by
\begin{equation}
\tilde{\sigma}(x_{\kappa(n,s)}^n):=\sigma(x_{\kappa(n,s)}^n)+\sigma_{1}(x_{\kappa(n,s)}^n)+\sigma_{2}(x_{\kappa(n,s)}^n)+\sigma_{3}(x_{\kappa(n,s)}^n) \label{eq:bn:tilde}
\end{equation}
where $\sigma_{1}(x_{\kappa(n,s)}^n)$, $\sigma_{2}(x_{\kappa(n,s)}^n)$  and $\sigma_{3}(x_{\kappa(n,s)}^n)$ are $d\times m$ matrices with their $(i,k)$-th elements given by
\begin{align}
{\sigma}^{(i,k)}_{1}(x_{\kappa(n,s)}^n):=& \sum_{j=1}^m\sum_{u=1}^d \int_{\kappa(n,s)}^s  \sigma^{(u,j)}(x_{\kappa(n,r)}^n)\frac{\partial \sigma^{(i,k)} (x_{\kappa(n,r)}^n)}{\partial x^u} dw_r^j \label{eq:sigma:1}
\\
\sigma^{(i,k)}_{2}(x_{\kappa(n,s)}^n):=& \sum_{u=1}^d \int_{\kappa(n, s)}^{s} \int_Z \frac{\partial \sigma^{(i,k)}(x_{\kappa(n,r)}^n)}{\partial x^u} \gamma^u(x_{\kappa(n,r)}^n,z_1) \tilde N(dr, dz_1) \notag
\\
\sigma^{(i,k)}_{3}(x_{\kappa(n,s)}^n):=& \int_{\kappa(n,s)}^s \int_{Z}\Big(\sigma^{(i,k)}\big(x_{\kappa(n,r)}^n+\gamma(x_{\kappa(n,r)}^n, z_1)\big)-\sigma^{(i,k)}\big(x_{\kappa(n,r)}^n\big) \notag
\\
& \hspace{1cm}-\sum_{u=1}^d  \frac{\partial \sigma^{(i,k)}(x_{\kappa(n,r)}^n)}{\partial x^u} \gamma^u(x_{\kappa(n,r)}^n,z_1)\Big)N(dr,dz_1) \notag
\end{align}
respectively, for every $i=1,\ldots,d$, $k=1,\ldots,m$. Similarly, for every $n \in \mathbb{N}$, $s\in [0,T]$ and $z_2 \in Z$, the jump coefficient $\tilde \gamma(x_{\kappa(n,s)}^n, z_2)$ of scheme \eqref{eq:milstein} is given by
\begin{equation}
\tilde{\gamma}(x_{\kappa(n,s)}^n, z_2):=\gamma(x_{\kappa(n,s)}^n, z_2)+\gamma_1(x_{\kappa(n,s)}^n, z_2)+\gamma_2(x_{\kappa(n,s)}^n, z_2)+\gamma_3(x_{\kappa(n,s)}^n, z_2) \label{eq:cn:tilde}
\end{equation}
where  $\gamma_1(x_{\kappa(n,s)}^n, z_2)$, $\gamma_2(x_{\kappa(n,s)}^n, z_2)$ and $\gamma_3(x_{\kappa(n,s)}^n, z_2)$   are $d$-dimensional vectors with their $i$th elements given by,
\begin{align*}
\gamma_1^{i}(x_{\kappa(n,s)}^n, z_2):= & \sum_{j=1}^m \sum_{u=1}^d  \int_{\kappa(n,s)}^s \frac{\partial \gamma^{i}(x_{\kappa(n,r)}^n, z_2)}{\partial x^u} \sigma^{(u,j)}(x_{\kappa(n,r)}^n)dw_r^j
\\
\gamma^{i}_{2}(x_{\kappa(n,s)}^n,z_2):=& \sum_{u=1}^d \int_{\kappa(n, s)}^{s} \int_Z \frac{\partial \gamma^{i}(x_{\kappa(n,r)}^n, z_2)}{\partial x^u} \gamma^u(x_{\kappa(n,r)}^n,z_1) \tilde N(dr, dz_1)
\\
\gamma_3^{i}(x_{\kappa(n,s)}^n, z_2):= & \int_{\kappa(n,s)}^s \int_{Z} \Big( \gamma^{i}(x_{\kappa(n,r)}^n+\gamma(x_{\kappa(n,r)}^n, z_1), z_2)-\gamma^{i}(x_{\kappa(n,r)}^n, z_2)
\\
& \hspace{1cm}-\sum_{u=1}^d \frac{\partial \gamma^{i}(x_{\kappa(n,r)}^n,z_2)}{\partial x^u} \gamma^u(x_{\kappa(n,r)}^n,z_1)
 \Big) N(dr,dz_1)
\end{align*}
respectively, for every $i=1,\ldots,d$.

\subsection{Moment Bounds}
One observes that due to Remark \ref{as:scheme:a^n}, for a fixed $n \in \mathbb{N}$, the drift coefficient of the scheme \eqref{eq:milstein} is bounded. Also, the diffusion and jump coefficients grow linearly due to Remark \ref{as:sde:growth}. Hence, one can refer to \cite{platen2010} to conclude that Assumptions A-1,  A-2 and A-4 along with equation \eqref{eq:an:tilde} imply that, for a fixed $n \in \mathbb{N}$,
\begin{align} \label{eq:scheme:mb:finite}
E\sup_{0 \leq t \leq T}|x_t^n|^p < \infty.
\end{align}
Clearly, one can not claim at this stage that the bound is independent of $n$. However, it guarantees that all local martingales appearing henceforth are in fact true martingales and thus the use of stopping time arguments is avoided. Before proving the moment bounds of the scheme \eqref{eq:milstein} in Lemma \ref{lem:scheme:momentbound}, one requires to show the following     lemmas.

\begin{lem} \label{lem:b1:rate}
Let Assumption A-2 hold, then
$$
E|\sigma_{1}(x_{\kappa(n,s)}^n)|^p \leq Kn^{-\frac{p}{2}}  \big(1+E|x_{\kappa(n,s)}^n|^p\big),
$$
for every $n \in \mathbb{N}$ and $s \in [0,T]$, where  $K:=K(L,p, m,d)$.
\end{lem}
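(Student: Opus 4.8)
The plan is to reduce the estimate on the Hilbert--Schmidt norm of the matrix $\sigma_1$ to an estimate on each of its scalar entries and then to apply the Burkholder--Davis--Gundy (BDG) inequality entry by entry. Since $\sigma_1$ is a $d\times m$ matrix, for $p\geq 2$ the power-mean (or Jensen) inequality gives
$$
E|\sigma_1(x^n_{\kappa(n,s)})|^p \leq K \sum_{i=1}^d\sum_{k=1}^m E\bigl|\sigma_1^{(i,k)}(x^n_{\kappa(n,s)})\bigr|^p,
$$
with $K=K(p,d,m)$, so it suffices to bound each term $E|\sigma_1^{(i,k)}|^p$ by $Kn^{-p/2}(1+E|x^n_{\kappa(n,s)}|^p)$.

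First I would record the structural observation that, for $r\in[\kappa(n,s),s]$, one has $\kappa(n,r)=\kappa(n,s)$, because $r$ stays in the same mesh interval of length $1/n$. Consequently every coefficient appearing in the integrand of \eqref{eq:sigma:1}, namely $\sigma^{(u,j)}(x^n_{\kappa(n,r)})$ and $\partial\sigma^{(i,k)}(x^n_{\kappa(n,r)})/\partial x^u$, is $\mathscr{F}_{\kappa(n,s)}$-measurable and constant in $r$ on the interval of integration. Next I would fix the two pointwise bounds that are needed: from Remark~\ref{as:sde:growth} (a consequence of Assumption A-2) one has $|\sigma^{(u,j)}(x)|\leq|\sigma(x)|\leq K(1+|x|)$, while the Lipschitz condition on $\sigma$ contained in Assumption A-2 (equivalently Remark~\ref{rem:der:bounded}) yields $|\partial\sigma^{(i,k)}(x)/\partial x^u|\leq L$.

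With these in hand, applying the triangle inequality in the finite sums over $j$ and $u$ and then the BDG inequality to each scalar stochastic integral gives
$$
E\bigl|\sigma_1^{(i,k)}\bigr|^p \leq K\sum_{j=1}^m E\Bigl(\int_{\kappa(n,s)}^s \Bigl|\sum_{u=1}^d \sigma^{(u,j)}(x^n_{\kappa(n,r)})\tfrac{\partial\sigma^{(i,k)}(x^n_{\kappa(n,r)})}{\partial x^u}\Bigr|^2 dr\Bigr)^{p/2}.
$$
Using the two pointwise bounds, the integrand is dominated by $K(1+|x^n_{\kappa(n,r)}|^2)=K(1+|x^n_{\kappa(n,s)}|^2)$, which is constant in $r$; hence the time integral contributes exactly the factor $(s-\kappa(n,s))^{p/2}\leq n^{-p/2}$. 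Taking expectations and using $(1+|x|^2)^{p/2}\leq K(1+|x|^p)$ produces the bound $Kn^{-p/2}(1+E|x^n_{\kappa(n,s)}|^p)$ for each entry, and summing over $i,k$ finishes the proof. There is no serious obstacle here; the only points requiring care are tracking the dimensional constants through the norm-equivalence and the finite sums, and invoking the correct growth and boundedness estimates --- the $n^{-p/2}$ rate is forced entirely by the length $1/n$ of the integration interval together with the $\mathcal{L}^2$-isometry built into BDG.
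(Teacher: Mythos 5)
Your proof is correct and follows essentially the same route as the paper: reduce the Hilbert--Schmidt norm to entrywise $p$-th moments, apply the BDG inequality to each scalar stochastic integral, bound $\sigma^{(u,j)}$ by linear growth (Remark \ref{as:sde:growth}) and $\partial\sigma^{(i,k)}/\partial x^u$ by $L$, and extract $n^{-p/2}$ from the interval length. The only cosmetic difference is that the paper uses H\"older's inequality to handle the time integral, whereas you observe that the integrand is constant on $[\kappa(n,s),s]$ (since $\kappa(n,r)=\kappa(n,s)$ there) and compute it directly --- a valid, slightly cleaner step that changes nothing substantive.
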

\begin{proof}
First one writes,
\begin{align*}
E|\sigma_{1}(x_{\kappa(n,s)}^n)|^p  = & E\Big(\sum_{k=1}^m \sum_{i=1}^d |\sigma^{(i,k)}_{1}(x_{\kappa(n,s)}^n)|^2\Big)^\frac{p}{2}
\\
\leq & K\sum_{k,j=1}^m \sum_{i,u=1}^d E\Big|\int_{\kappa(n,s)}^s  \sigma^{(u,j)}(x_{\kappa(n,r)}^n)\frac{\partial \sigma^{(i,k)} (x_{\kappa(n,r)}^n)}{\partial x^u} dw_r^j\Big|^p
\end{align*}
which on the application of an elementary inequality for stochastic integrals and H\"{o}lder's  inequality yields
\begin{align*}
E|\sigma_{1}(x_{\kappa(n,s)}^n)&|^p  \leq K\sum_{k,j=1}^m \sum_{i,u=1}^d E\Big(\int_{\kappa(n,s)}^s  \Big|\sigma^{(u,j)}(x_{\kappa(n,r)}^n)\frac{\partial \sigma^{(i,k)} (x_{\kappa(n,r)}^n)}{\partial x^u}\Big|^2 dr\Big)^\frac{p}{2}
\\
& \leq Kn^{-(\frac{p}{2}-1)}\sum_{k,j=1}^m \sum_{i, u=1}^d E\int_{\kappa(n,s)}^s  |\sigma^{(u,j)}(x_{\kappa(n,r)}^n)|^p\Big|\frac{\partial \sigma^{(i,k)} (x_{\kappa(n,r)}^n)}{\partial x^u}\Big|^p dr
\end{align*}
and then due to Remarks [\ref{as:sde:growth},  \ref{rem:der:bounded}], one obtains
\begin{align}
E|\sigma_{1}(x_{\kappa(n,s)}^n)|^p & \leq Kn^{-(\frac{p}{2}-1)} E\int_{\kappa(n,s)}^s  \big(1+|x_{\kappa(n,r)}^n|^p\big) dr = Kn^{-\frac{p}{2}}  \big(1+E|x_{\kappa(n,s)}^n|^p\big) \notag
\end{align}
for every $n \in \mathbb{N}$ and $s \in [0,T]$. This completes the proof.
\end{proof}
\begin{lem} \label{lem:b2:rate}
Let Assumptions A-2 and A-4 hold, then
$$
E|\sigma_{2}(x_{\kappa(n,s)}^n)|^p \leq  K n^{-1} \big(1+E|x_{\kappa(n,s)}^n|^p\big),
$$
for every $n \in \mathbb{N}$ and $s \in [0,T]$, where  $K:=K(L,p, m,d)$.
\end{lem}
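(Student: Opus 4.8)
The plan is to mirror the proof of Lemma \ref{lem:b1:rate}, replacing the elementary stochastic-integral inequality used there by the Burkholder--Davis--Gundy type estimate for compensated Poisson integrals from Lemma \ref{lem:BGD:jump}. I would begin by passing from the Hilbert--Schmidt norm to the individual entries: writing $E|\sigma_2(x_{\kappa(n,s)}^n)|^p = E\big(\sum_{k=1}^m\sum_{i=1}^d|\sigma_2^{(i,k)}(x_{\kappa(n,s)}^n)|^2\big)^{p/2}$, using the equivalence of the $\ell^2$ and $\ell^p$ norms on a finite index set, and then pulling the finite sum over $u$ outside by convexity, so that the task reduces to bounding $E\big|\int_{\kappa(n,s)}^s\int_Z \frac{\partial\sigma^{(i,k)}(x_{\kappa(n,r)}^n)}{\partial x^u}\,\gamma^u(x_{\kappa(n,r)}^n,z_1)\,\tilde N(dr,dz_1)\big|^p$ for each fixed triple $(i,k,u)$.

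The key step is to apply Lemma \ref{lem:BGD:jump} with $r=p\geq 2$ to each such integral, taking $g_r(z_1)=\frac{\partial\sigma^{(i,k)}(x_{\kappa(n,r)}^n)}{\partial x^u}\gamma^u(x_{\kappa(n,r)}^n,z_1)$ on $[\kappa(n,s),s]$. A simplifying observation that I would exploit throughout is that $\kappa(n,r)=\kappa(n,s)$ for all $r\in[\kappa(n,s),s]$, so the grid value is the constant $x_{\kappa(n,s)}^n$ over the whole interval and every time integral over this interval of length at most $n^{-1}$ contributes only that length as a factor.

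Lemma \ref{lem:BGD:jump} produces two terms. For the first, the boundedness of $\partial\sigma^{(i,k)}/\partial x^u$ from Remark \ref{rem:der:bounded} together with the $\mathcal L^2$-growth of $\gamma$ from Remark \ref{as:sde:growth} gives $\int_Z|g_r(z_1)|^2\nu(dz_1)\leq K(1+|x_{\kappa(n,s)}^n|^2)$; raising the resulting length-$n^{-1}$ integral to the power $p/2$ yields a contribution of order $n^{-p/2}(1+E|x_{\kappa(n,s)}^n|^p)$. For the second term I would use the same boundedness together with the $\mathcal L^p$-growth of $\gamma$ from Assumption A-4 to get $\int_Z|g_r(z_1)|^p\nu(dz_1)\leq K(1+|x_{\kappa(n,s)}^n|^p)$, and here the length-$n^{-1}$ time integral contributes only a single factor $n^{-1}$, giving order $n^{-1}(1+E|x_{\kappa(n,s)}^n|^p)$.

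Since $p\geq 2$ forces $n^{-p/2}\leq n^{-1}$, the second term dominates; summing over the finitely many indices then gives exactly $Kn^{-1}(1+E|x_{\kappa(n,s)}^n|^p)$. There is no serious obstacle here, but the point worth flagging is that this second, purely $\mathcal L^p$ term of Lemma \ref{lem:BGD:jump} has no Brownian analogue, and it is precisely what degrades the rate from the $n^{-p/2}$ of Lemma \ref{lem:b1:rate} to $n^{-1}$; one should verify that Assumption A-4 (rather than merely the $\mathcal L^2$-growth of Remark \ref{as:sde:growth}) is genuinely what controls it.
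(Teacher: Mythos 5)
Your proposal is correct and follows essentially the same route as the paper: the same reduction from the Hilbert--Schmidt norm to componentwise integrals, the same application of Lemma \ref{lem:BGD:jump}, and the same use of Remarks [\ref{as:sde:growth}, \ref{rem:der:bounded}] and Assumption A-4 to obtain the two contributions $Kn^{-p/2}(1+E|x_{\kappa(n,s)}^n|^p)$ and $Kn^{-1}(1+E|x_{\kappa(n,s)}^n|^p)$, with the latter dominating since $p\geq 2$. Your closing observation that the second, purely $\mathcal{L}^p$ term of Lemma \ref{lem:BGD:jump} is what degrades the rate relative to Lemma \ref{lem:b1:rate}, and that it is genuinely Assumption A-4 that controls it, matches the paper exactly.
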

\begin{proof} One notes that,
\begin{align*}
E|\sigma_{2}(&x_{\kappa(n,s)}^n)|^p  = E\Big(\sum_{i=1}^d\sum_{k=1}^m |\sigma^{(i,k)}_{2}(x_{\kappa(n,s)}^n)|^2\Big)^\frac{p}{2} \leq K\sum_{i=1}^d\sum_{k=1}^m E|\sigma^{(i,k)}_{2}(x_{\kappa(n,s)}^n)|^p
\\
&\leq K\sum_{i=1}^d\sum_{k=1}^m  \sum_{u=1}^d E\Big|\int_{\kappa(n, s)}^{s} \int_Z \frac{\partial \sigma^{(i,k)}(x_{\kappa(n,r)}^n)}{\partial x^u} \gamma^u(x_{\kappa(n,r)}^n,z_1) \tilde N(dr, dz_1)\Big|^p
\end{align*}
which due to Lemma \ref{lem:BGD:jump} yields
\begin{align*}
& \qquad \qquad \qquad \qquad \qquad \qquad E|\sigma_{2}  (x_{\kappa(n,s)}^n)|^p
\\
&\leq  K \sum_{i=1}^d\sum_{k=1}^m  \sum_{u=1}^d E\Big(\int_{\kappa(n, s)}^{s} \int_Z \Big|\frac{\partial \sigma^{(i,k)}(x_{\kappa(n,r)}^n)}{\partial x^u} \gamma^u(x_{\kappa(n,r)}^n,z_1)\Big|^2 \nu(dz_1)dr\Big)^\frac{p}{2}
\\
& +K \sum_{i=1}^d\sum_{k=1}^m  \sum_{u=1}^d E\int_{\kappa(n, s)}^{s} \int_Z \Big|\frac{\partial \sigma^{(i,k)}(x_{\kappa(n,r)}^n)}{\partial x^u} \gamma^u(x_{\kappa(n,r)}^n,z_1)\Big|^p \nu(dz_1)dr
\end{align*}
and then on using Remarks [\ref{as:sde:growth},  \ref{rem:der:bounded}] and Assumption A-4, one obtains
\begin{align*}
E|\sigma_{2}(x_{\kappa(n,s)}^n)|^p & \leq  K n^{-\frac{p}{2}} \big(1+E|x_{\kappa(n,s)}^n|^p\big)+ K n^{-1} \big(1+E|x_{\kappa(n,s)}^n|^p\big)
\end{align*}
for every $n \in \mathbb{N}$ and $s \in [0,T]$. This finishes the proof.
\end{proof}
\begin{lem} \label{lem:b3:rate}
Let Assumptions A-2 to A-4 hold, then
$$
E|\sigma_{3}(x_{\kappa(n,s)}^n)|^p \leq K n^{-1} \big(1+E|x_{\kappa(n,s)}^n|^p),
$$
for every $n \in \mathbb{N}$ and $s \in [0,T]$, where $K:=K(L,p, m,d)$.
\end{lem}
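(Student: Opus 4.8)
The plan is to follow the same strategy as in the proofs of Lemmas \ref{lem:b1:rate} and \ref{lem:b2:rate}, but with one crucial change in how the integrand is estimated. First I would pass to scalar components, writing $E|\sigma_{3}(x_{\kappa(n,s)}^n)|^p \leq K\sum_{i=1}^d\sum_{k=1}^m E|\sigma_{3}^{(i,k)}(x_{\kappa(n,s)}^n)|^p$. The key simplification is that for $r\in[\kappa(n,s),s]$ one has $\kappa(n,r)=\kappa(n,s)$, so throughout the inner integral the argument equals the fixed $\mathscr{F}_{\kappa(n,s)}$-measurable random variable $X:=x_{\kappa(n,s)}^n$, and the integrand reduces to the quantity $G^{(i,k)}(z_1):=\sigma^{(i,k)}(X+\gamma(X,z_1))-\sigma^{(i,k)}(X)-\sum_{u=1}^d \frac{\partial \sigma^{(i,k)}(X)}{\partial x^u}\gamma^u(X,z_1)$, which depends only on $z_1$ and the frozen point $X$.

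The decisive estimate is the pointwise bound $|G^{(i,k)}(z_1)|\leq K|\gamma(X,z_1)|$, obtained from the mean value theorem together with the boundedness of the first derivatives of $\sigma$ supplied by Remark \ref{rem:der:bounded}. Although Taylor's theorem with the bounded second derivatives would give the sharper $|G^{(i,k)}|\leq K|\gamma(X,z_1)|^2$, I would deliberately avoid that bound: the square would force the moment $\int_Z|\gamma|^{2}\nu(dz_1)\leq K(1+|X|^2)$ to be raised to the $p$-th power, producing the wrong growth $(1+E|X|^{2p})$ instead of the required $(1+E|X|^p)$. This is exactly the main obstacle, and using only the first-order bound is what resolves it.

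Next I would split $N(dr,dz_1)=\tilde N(dr,dz_1)+\nu(dz_1)dr$, so that $\sigma_{3}^{(i,k)}=M^{(i,k)}+D^{(i,k)}$, where $M^{(i,k)}$ is the compensated integral and $D^{(i,k)}$ the compensator drift, and estimate each by $|\sigma_{3}^{(i,k)}|^p\leq 2^{p-1}(|M^{(i,k)}|^p+|D^{(i,k)}|^p)$. For $M^{(i,k)}$ I would apply Lemma \ref{lem:BGD:jump}; using $|G^{(i,k)}|\leq K|\gamma|$, the first term is controlled by $\int_Z|\gamma|^2\nu(dz_1)\leq K(1+|X|^2)$ (Remark \ref{as:sde:growth}) over an interval of length at most $1/n$, which after taking the $p/2$ power and the elementary inequality $(1+|X|^2)^{p/2}\leq K(1+|X|^p)$ yields $Kn^{-p/2}(1+E|X|^p)$, while the second term is controlled by Assumption A-4, namely $\int_Z|\gamma|^p\nu(dz_1)\leq L(1+|X|^p)$, yielding $Kn^{-1}(1+E|X|^p)$.

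Finally, for the drift $D^{(i,k)}=(s-\kappa(n,s))\int_Z G^{(i,k)}(z_1)\nu(dz_1)$ I would use that the interval has length at most $1/n$ together with the first-order bound once more: $\int_Z|G^{(i,k)}|\nu(dz_1)\leq K\int_Z|\gamma|\nu(dz_1)\leq K(1+|X|)$, where the last step interpolates by Hölder's inequality from Assumption A-4 and $\nu(Z)<\infty$. This gives $E|D^{(i,k)}|^p\leq Kn^{-p}(1+E|X|^p)$. Collecting the contributions from $M^{(i,k)}$ and $D^{(i,k)}$, using $n^{-p/2}\leq n^{-1}$ and $n^{-p}\leq n^{-1}$ for $p\geq 2$, and summing over the finitely many indices $i,k$ completes the proof.
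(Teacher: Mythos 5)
Your proposal is correct and takes essentially the same route as the paper's proof: both estimate the integrand of $\sigma_3^{(i,k)}$ by the first-order bound $K|\gamma(\cdot,z_1)|$ (via Lipschitzness of $\sigma$, equivalently bounded first derivatives from Remark \ref{rem:der:bounded}), split $N(dr,dz_1)=\tilde N(dr,dz_1)+\nu(dz_1)dr$, and apply Lemma \ref{lem:BGD:jump} together with Remark \ref{as:sde:growth} and Assumption A-4 to obtain the three contributions of orders $n^{-p/2}$, $n^{-1}$ and $n^{-p}$. Your explicit observation about deliberately avoiding the sharper second-order Taylor bound (which would demand $2p$-th moments) just makes transparent a choice the paper makes implicitly.
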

\begin{proof}
By using Assumption A-2 and Remark \ref{rem:der:bounded}, one obtains
\begin{align*}
E&|\sigma_{3}(x_{\kappa(n,s)}^n)|^p= E\Big(\sum_{i=1}^{d}\sum_{k=1}^{m}|\sigma^{(i,k)}_{3}(x_{\kappa(n,s)}^n)|^2\Big)^\frac{p}{2} 
\\
\leq & K E\sum_{i=1}^{d}\sum_{k=1}^{m}\Big|\int_{\kappa(n,s)}^s \int_{Z}\Big(|\sigma^{(i,k)}\big(x_{\kappa(n,r)}^n+\gamma(x_{\kappa(n,r)}^n, z_1)\big)-\sigma^{(i,k)}\big(x_{\kappa(n,r)}^n\big)|
\\
&\hspace{3cm}+\sum_{u=1}^d  \Big|\frac{\partial \sigma^{(i,k)}(x_{\kappa(n,r)}^n)}{\partial x^u}\Big| |\gamma^u(x_{\kappa(n,r)}^n,z_1)|\Big)N(dr,dz_1)\Big|^p
\\
\leq & K E\Big(\int_{\kappa(n,s)}^s \int_{Z}|\gamma(x_{\kappa(n,r)}^n, z_1)|N(dr,dz_1)\Big)^p
\end{align*}
which due to Lemma \ref{lem:BGD:jump} gives
\begin{align*}
E|\sigma_{3}(x_{\kappa(n,s)}^n)|^p \leq & KE\Big(\int_{\kappa(n,s)}^s \int_{Z}|\gamma(x_{\kappa(n,r)}^n, z_1)| \tilde N(dr,dz_1)\Big)^p
\\
&+ K E\Big(\int_{\kappa(n,s)}^s \int_{Z}|\gamma(x_{\kappa(n,r)}^n, z_1)| \nu(dz_1)dr\Big)^p
\\
\leq & KE\Big(\int_{\kappa(n,s)}^s \int_{Z}|\gamma(x_{\kappa(n,r)}^n, z_1)|^2 \nu(dz_1)dr \Big)^\frac{p}{2}
\\
& + K E\int_{\kappa(n,s)}^s \int_{Z}|\gamma(x_{\kappa(n,r)}^n, z_1)|^p \nu(dz_1)dr
\\
& \hspace{2cm} + K E\Big(\int_{\kappa(n,s)}^s \int_{Z}|\gamma(x_{\kappa(n,r)}^n, z_1)| \nu(dz_1)dr\Big)^p
\end{align*}
and then on applying Remark \ref{as:sde:growth} and Assumption  A-4, one obtains
\begin{align*}
E|\sigma_{3}(x_{\kappa(n,s)}^n)|^p & \leq K n^{-\frac{p}{2}} \big(1+E|x_{\kappa(n,s)}^n|^p)+K n^{-1} \big(1+E|x_{\kappa(n,s)}^n|^p)
\\
&+K n^{-p} \big(1+E|x_{\kappa(n,s)}^n|^p)
\end{align*}
for every $n \in \mathbb{N}$ and $s \in [0,T]$.  This completes the proof.
\end{proof}
The following corollary is a consequence of Lemmas [\ref{lem:b1:rate}, \ref{lem:b2:rate}, \ref{lem:b3:rate}].

\begin{cor} \label{lem:tildeb:momentbound}
Let Assumptions A-2  to A-4 hold, then
$$
E| \tilde{\sigma}(x_{\kappa(n,s)}^n)|^p \leq  K(1+E|x_{\kappa(n,s)}^n|^p),
$$
for every $n \in \mathbb{N}$ and $s \in [0,T]$, where $K:=K(L,p, m,d)$.
\end{cor}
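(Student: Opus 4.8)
The plan is to exploit the additive decomposition \eqref{eq:bn:tilde}, namely $\tilde{\sigma}(x_{\kappa(n,s)}^n)=\sigma(x_{\kappa(n,s)}^n)+\sigma_{1}(x_{\kappa(n,s)}^n)+\sigma_{2}(x_{\kappa(n,s)}^n)+\sigma_{3}(x_{\kappa(n,s)}^n)$, and to treat each of the four summands separately. First I would apply the elementary inequality $|a_1+a_2+a_3+a_4|^p\leq 4^{p-1}(|a_1|^p+|a_2|^p+|a_3|^p+|a_4|^p)$ together with the subadditivity of the Hilbert--Schmidt norm, and then take expectations, so that
$$
E|\tilde{\sigma}(x_{\kappa(n,s)}^n)|^p\leq 4^{p-1}\Big(E|\sigma(x_{\kappa(n,s)}^n)|^p+\sum_{\ell=1}^{3}E|\sigma_{\ell}(x_{\kappa(n,s)}^n)|^p\Big).
$$

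For the leading term I would invoke the linear growth of $\sigma$ recorded in Remark \ref{as:sde:growth}, which gives $|\sigma(x)|^2\leq L(1+|x|^2)$ and hence, after raising to the power $p/2$, $|\sigma(x)|^p\leq K(1+|x|^p)$; taking expectations yields $E|\sigma(x_{\kappa(n,s)}^n)|^p\leq K(1+E|x_{\kappa(n,s)}^n|^p)$. For the three correction terms I would simply quote Lemmas \ref{lem:b1:rate}, \ref{lem:b2:rate} and \ref{lem:b3:rate}, which already supply bounds of the form $Kn^{-p/2}$, $Kn^{-1}$ and $Kn^{-1}$ (the latter also carrying an $n^{-p}$ contribution) times $(1+E|x_{\kappa(n,s)}^n|^p)$ respectively.

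The single observation that makes everything collapse is that each of the $n$-dependent prefactors $n^{-p/2}$, $n^{-1}$ and $n^{-p}$ is bounded above by $1$ for every $n\in\mathbb{N}$ and every $p\geq 2$, so they can be absorbed into the generic constant $K$. Summing the four estimates then gives $E|\tilde{\sigma}(x_{\kappa(n,s)}^n)|^p\leq K(1+E|x_{\kappa(n,s)}^n|^p)$, as claimed. There is no genuine obstacle here: the corollary is a direct bookkeeping consequence of the three preceding lemmas together with the linear growth of $\sigma$, the only point to note being that the sharp decay rates furnished by those lemmas are far stronger than what is needed and may be discarded in favour of the uniform constant.
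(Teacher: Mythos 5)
Your proposal is correct and follows exactly the route the paper intends: the paper states the corollary as a direct consequence of Lemmas \ref{lem:b1:rate}, \ref{lem:b2:rate} and \ref{lem:b3:rate}, and your argument simply fills in the obvious details (the decomposition \eqref{eq:bn:tilde}, the power inequality, the linear growth of $\sigma$ from Remark \ref{as:sde:growth}, and absorbing the factors $n^{-p/2}$, $n^{-1}$, $n^{-p}\leq 1$ into $K$). Nothing is missing.
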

\begin{lem} \label{lem:c1:rate}
Let Assumptions A-2 and A-4 hold, then
$$
E\int_Z|\gamma_1(x_{\kappa(n,s)}^n, z_2)|^p \nu(dz_2) \leq K n^{-\frac{p}{2}} (1+E|x_{\kappa(n,s)}^n|^{p}),
$$
for every $n \in \mathbb{N}$ and $s \in [0,T]$, where $K:=K(L,p, m,d)$.
\end{lem}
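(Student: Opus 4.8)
The plan is to follow the template of Lemma~\ref{lem:b1:rate} (the $\sigma_{1}$ estimate) almost verbatim, the only genuinely new feature being the outer integration against $\nu(dz_2)$. First I would expand the Euclidean norm via $|\gamma_1(x_{\kappa(n,s)}^n,z_2)|^p \leq K\sum_{i=1}^d |\gamma_1^i(x_{\kappa(n,s)}^n,z_2)|^p$ and interchange the expectation with the $\nu(dz_2)$-integration by Tonelli (the integrand is nonnegative and finiteness is guaranteed by \eqref{eq:scheme:mb:finite}). This reduces the claim to estimating $\int_Z E|\gamma_1^i(x_{\kappa(n,s)}^n,z_2)|^p\,\nu(dz_2)$ for each fixed $i$.

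Next, for each fixed $z_2$ and $i$, the quantity $\gamma_1^i(x_{\kappa(n,s)}^n,z_2)$ is a sum over $j,u$ of It\^o integrals in $dw_r^j$, so I would apply the elementary inequality for stochastic integrals followed by H\"older's inequality in time, exactly as in Lemma~\ref{lem:b1:rate}. Since the interval $[\kappa(n,s),s]$ has length at most $1/n$, this produces a factor $n^{-(p/2-1)}$ and leaves $E\int_{\kappa(n,s)}^s |\partial\gamma^i(x_{\kappa(n,r)}^n,z_2)/\partial x^u|^p\,|\sigma^{(u,j)}(x_{\kappa(n,r)}^n)|^p\,dr$ inside. (Alternatively, since $\kappa(n,r)=\kappa(n,s)$ throughout this interval, the integrand is constant in $r$, so $\gamma_1^i$ is a constant multiple of the increment $w_s^j-w_{\kappa(n,s)}^j$; independence of that increment then yields $E|w_s^j-w_{\kappa(n,s)}^j|^p\leq Kn^{-p/2}$ directly, which is a cleaner route to the same factor.)

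I would then interchange the $\nu(dz_2)$-integration with the time-integral, bound $|\sigma^{(u,j)}(x_{\kappa(n,r)}^n)|^p \leq K(1+|x_{\kappa(n,r)}^n|^p)$ via Remark~\ref{as:sde:growth}, and control the $z_2$-integral of the jump derivative. The residual time integral over an interval of length $1/n$ contributes the final factor $n^{-1}$, which combines with $n^{-(p/2-1)}$ to give the claimed $n^{-p/2}$, and the moment $(1+E|x_{\kappa(n,s)}^n|^p)$ emerges because the integrand depends on $r$ only through the left grid point.

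The hard part will be the final ingredient: controlling $\int_Z |\partial\gamma^i(x,z)/\partial x^u|^p\,\nu(dz)$ uniformly in $x$ by a constant. Unlike the $\sigma_1$ case, where $|\partial\sigma/\partial x|\leq L$ holds pointwise, here one needs the $L^p(\nu)$ bound on the jump derivative rather than the $L^1(\nu)$ bound recorded in Remark~\ref{rem:der:bounded}; since $\nu(Z)<\infty$ gives $L^p(\nu)\subset L^1(\nu)$ and not the reverse, the $L^1$ estimate alone does not suffice. I would therefore invoke the $L^p(\nu)$ counterpart of that bound (the natural strengthening consistent with Assumption~\ref{as:sde:growth-p:jump} for $\gamma$ itself), or derive it from the $L^2(\nu)$-Lipschitz structure of Assumption~\ref{as:sde:lip:der}. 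It is essential that this bound be uniform in $x$ and free of extra powers of $|x|$, as that is precisely what produces the clean factor $(1+E|x_{\kappa(n,s)}^n|^p)$ in the statement rather than a higher moment of the scheme.
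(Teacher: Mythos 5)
Your proposal is correct and follows the paper's proof essentially verbatim: the paper expands the norm componentwise, interchanges $E$ with $\int_Z\cdot\,\nu(dz_2)$, applies the elementary inequality for stochastic integrals followed by H\"older's inequality in time to extract the factor $n^{-(p/2-1)}$, bounds $|\sigma^{(u,j)}|$ by Remark \ref{as:sde:growth}, and closes by citing Remark \ref{rem:der:bounded} — exactly your route (your observation that the integrand is constant in $r$ on $[\kappa(n,s),s]$, so that $\gamma_1^i$ is a fixed multiple of a Brownian increment, is a harmless simplification the paper does not use). Your final paragraph identifies a genuine soft spot, and it is one the paper glosses rather than resolves: the last step needs $\int_Z|\partial\gamma^i(x,z_2)/\partial x^u|^p\,\nu(dz_2)\leq K$ uniformly in $x$, whereas Remark \ref{rem:der:bounded}, which is all the paper cites at this point, literally records only an $L^1(\nu)$ bound (and from Assumptions A-2 and A-3 one can extract, via a Fatou argument on difference quotients, a uniform $L^2(\nu)$ bound, but nothing stronger). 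So your primary fix — reading the hypotheses as including the $L^p(\nu)$ counterpart of the derivative bound, in the spirit of Assumption A-4 — is the correct interpretation of what the authors implicitly use here, and again in Lemmas \ref{lem:c2:rate} and \ref{lem:c3:rate}. One caution on your fallback: deriving the $L^p(\nu)$ bound from the $L^2(\nu)$-Lipschitz structure of Assumption A-3 cannot work for $p>2$, since uniform $L^2(\nu)$ control of $\partial\gamma/\partial x$ gives no $L^p(\nu)$ control — as you note, $\nu(Z)<\infty$ yields only the inclusion $L^p(\nu)\subset L^2(\nu)$, not the reverse — so the strengthened integrability must be assumed, not derived.
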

\begin{proof}
One observes that
\begin{align*}
E\int_Z|\gamma_1(&x_{\kappa(n,s)}^n, z_2)|^p \nu(dz_2)=E\int_Z\Big(\sum_{i=1}^d|\gamma_1^{i}(x_{\kappa(n,s)}^n, z_2)|^2\Big)^\frac{p}{2} \nu(dz_2)
\\
\leq & K \sum_{i=1}^d\int_Z\sum_{j=1}^m \sum_{u=1}^d E\Big| \int_{\kappa(n,s)}^s \frac{\partial \gamma^{i}(x_{\kappa(n,r)}^n, z_2)}{\partial x^u} \sigma^{(u,j)}(x_{\kappa(n,r)}^n)dw_r^j\Big|^p \nu(dz_2)
\end{align*}
which on using an elementary inequality for stochastic integrals  and H\"{o}lder's inequality implies
\begin{align*}
& E\int_Z|\gamma_1(x_{\kappa(n,s)}^n, z_2)|^p \nu(dz_2)
\\
& \leq  K \sum_{i,u=1}^d\sum_{j=1}^m \int_Z E\Big( \int_{\kappa(n,s)}^s \Big|\frac{\partial \gamma^{i}(x_{\kappa(n,r)}^n, z_2)}{\partial x^u} \sigma^{(u,j)}(x_{\kappa(n,r)}^n)\Big|^2dr\Big)^\frac{p}{2} \nu(dz_2)
\\
& \leq  K n^{-(\frac{p}{2}-1)}\sum_{i,u=1}^d\sum_{j=1}^m \int_Z E\int_{\kappa(n,s)}^s \Big|\frac{\partial \gamma^{i}(x_{\kappa(n,r)}^n, z_2)}{\partial x^u}\Big|^{p} |\sigma^{(u,j)}(x_{\kappa(n,r)}^n)|^{p} dr \nu(dz_2)
\end{align*}
and then due to Remarks [\ref{as:sde:growth},  \ref{rem:der:bounded}], one obtains
\begin{align}
E\int_Z|\gamma_1(x_{\kappa(n,s)}^n, z_2)|^p \nu(dz_2) \leq  K n^{-\frac{p}{2}} (1+E|x_{\kappa(n,s)}^n|^p) \notag
\end{align}
for every $n \in \mathbb{N}$ and $s \in [0,T]$. Hence the proof follows.
\end{proof}
\begin{lem} \label{lem:c2:rate}
Let Assumptions A-2 and A-4 hold, then
$$
E\int_Z|\gamma_2(x_{\kappa(n,s)}^n, z_2)|^p \nu(dz_2) \leq K n^{-1} (1+E|x_{\kappa(n,s)}^n|^p ),
$$
for every $n \in \mathbb{N}$ and $s \in [0,T]$, where $K:=K(L,p, m,d)$.
\end{lem}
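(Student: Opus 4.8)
The plan is to mirror the proof of Lemma~\ref{lem:b2:rate} (the $\sigma_2$ estimate), which treats exactly the same compensated Poisson integral structure, while incorporating the outer $\nu(dz_2)$-integration in the manner of Lemma~\ref{lem:c1:rate}. First I would use the elementary inequality $\big(\sum_{i=1}^d a_i^2\big)^{p/2}\leq K\sum_{i=1}^d|a_i|^p$ (valid since $p\geq 2$) together with the triangle inequality over the finite sum in $u$, so as to reduce the claim to bounding, for each pair $(i,u)$, the quantity
\begin{align*}
E\int_Z\Big|\int_{\kappa(n,s)}^s\int_Z \frac{\partial\gamma^i(x^n_{\kappa(n,r)},z_2)}{\partial x^u}\,\gamma^u(x^n_{\kappa(n,r)},z_1)\,\tilde N(dr,dz_1)\Big|^p\nu(dz_2).
\end{align*}
Here the key point is that for $\nu$-almost every fixed $z_2$ the inner integral against $\tilde N(dr,dz_1)$ is a genuine compensated Poisson integral in the variables $(r,z_1)$, with $\frac{\partial\gamma^i(\cdot,z_2)}{\partial x^u}$ playing the role of a fixed (in $z_1$) integrand factor.

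Next I would apply Lemma~\ref{lem:BGD:jump} pointwise in $z_2$ to this inner integral, which produces two contributions: a quadratic-variation term raised to the power $p/2$ and a pure-jump term raised to the power $p$. For the quadratic-variation term I would invoke Fubini to bring $\int_Z\,\cdot\,\nu(dz_2)$ inside, apply H\"older's inequality on the time integral over $[\kappa(n,s),s]$ (whose length is at most $1/n$) to extract a factor $n^{-(p/2-1)}$, bound $\int_Z|\gamma^u(x^n_{\kappa(n,r)},z_1)|^2\nu(dz_1)$ by Remark~\ref{as:sde:growth}, and bound $\int_Z\big|\frac{\partial\gamma^i(x^n_{\kappa(n,r)},z_2)}{\partial x^u}\big|^p\nu(dz_2)$ by Remark~\ref{rem:der:bounded}; the remaining $dr$-integral over an interval of length $1/n$ supplies the last factor, yielding a bound of order $n^{-p/2}(1+E|x^n_{\kappa(n,s)}|^p)$. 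For the pure-jump term, Fubini separates the $z_1$- and $z_2$-integrals, so I would use Assumption~A-4 for $\int_Z|\gamma^u(x^n_{\kappa(n,r)},z_1)|^p\nu(dz_1)$ and Remark~\ref{rem:der:bounded} for the $z_2$-integral of the derivative, with the single time integral over $[\kappa(n,s),s]$ producing the factor $n^{-1}$; this gives a bound of order $n^{-1}(1+E|x^n_{\kappa(n,s)}|^p)$.

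Finally, combining the two contributions and using $n^{-p/2}\leq n^{-1}$ (since $p\geq 2$), I obtain the asserted estimate $E\int_Z|\gamma_2(x^n_{\kappa(n,s)},z_2)|^p\nu(dz_2)\leq Kn^{-1}(1+E|x^n_{\kappa(n,s)}|^p)$. The main obstacle, and the only genuine difference from the $\sigma_2$ case of Lemma~\ref{lem:b2:rate}, is the measure-theoretic bookkeeping in interchanging the order of the outer $\nu(dz_2)$-integral with the stochastic and time integrals, and in verifying that the $z_2$-dependent derivative factor is controlled in $L^p(\nu)$; everything else is a routine application of Lemma~\ref{lem:BGD:jump}, H\"older's inequality, and the growth bounds already recorded in Remarks~\ref{as:sde:growth} and~\ref{rem:der:bounded} and Assumption~A-4.
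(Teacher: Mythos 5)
Your proposal follows essentially the same route as the paper's own proof: the elementary inequality $\big(\sum_i a_i^2\big)^{p/2}\leq K\sum_i|a_i|^p$, an application of Lemma~\ref{lem:BGD:jump} to the inner compensated integral, and then Remarks~\ref{as:sde:growth} and~\ref{rem:der:bounded} together with Assumption~A-4 to bound the quadratic-variation term by $Kn^{-p/2}(1+E|x^n_{\kappa(n,s)}|^p)$ and the pure-jump term by $Kn^{-1}(1+E|x^n_{\kappa(n,s)}|^p)$, exactly as in the paper (which likewise invokes Remark~\ref{rem:der:bounded} for the $\nu(dz_2)$-integral of the $p$-th power of the derivative, and notes that on $[\kappa(n,s),s)$ the integrand is constant in $r$ since $\kappa(n,r)=\kappa(n,s)$, so your H\"older step in time is a harmless variant). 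The argument is correct and matches the paper's proof in all essentials.
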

\begin{proof}
One observes that
\begin{align*}
& E\int_Z|\gamma_2(x_{\kappa(n,s)}^n, z_2)|^p  \nu(dz_2)= E\int_Z\Big(\sum_{i=1}^d|\gamma_2^{i}(x_{\kappa(n,s)}^n, z_2)|^2\Big)^\frac{p}{2} \nu(dz_2)
\\
& \leq K \int_Z\sum_{i=1}^d \sum_{u=1}^d E\Big|\int_{\kappa(n, s)}^{s} \int_Z \frac{\partial \gamma^{i}(x_{\kappa(n,r)}^n, z_2)}{\partial x^u} \gamma^u(x_{\kappa(n,r)}^n,z_1) \tilde N(dr, dz_1)\Big|^p \nu(dz_2)
\end{align*}
which due to Lemma \ref{lem:BGD:jump} yields
\begin{align*}
& \hspace{3cm} E\int_Z|\gamma_2(x_{\kappa(n,s)}^n, z_2)|^p  \nu(dz_2)
\\
& \leq K \int_Z\sum_{i,u=1}^d E\Big(\int_{\kappa(n, s)}^{s} \int_Z \Big| \frac{\partial \gamma^{i}(x_{\kappa(n,r)}^n, z_2)}{\partial x^u} \gamma^u(x_{\kappa(n,r)}^n,z_1)\Big|^2  \nu(dz_1) dr \Big)^{\frac{p}{2}} \nu(dz_2)
\\
& + K \int_Z\sum_{i,u=1}^d  E\int_{\kappa(n, s)}^{s} \int_Z \Big| \frac{\partial \gamma^{i}(x_{\kappa(n,r)}^n, z_2)}{\partial x^u} \gamma^u(x_{\kappa(n,r)}^n,z_1)\Big|^p  \nu(dz_1) dr  \nu(dz_2)
\end{align*}
and then on using Remarks [\ref{as:sde:growth}, \ref{rem:der:bounded}] and Assumption A-4, one obtains,
\begin{align*}
E\int_Z|\gamma_2(x_{\kappa(n,s)}^n, z_2)|^p \nu(dz_2) & \leq K n^{-\frac{p}{2}} (1+E|x_{\kappa(n,s)}^n|^p ) +K n^{-1} (1+E|x_{\kappa(n,s)}^n|^p)
\end{align*}
for every $n \in \mathbb{N}$ and $s \in [0,T]$. Thus the proof finishes.
\end{proof}
\begin{lem} \label{lem:c3:rate}
Let Assumptions A-2 to A-4 hold, then
$$
E\int_Z|\gamma_3(x_{\kappa(n,s)}^n, z_2)|^p \nu(dz_2) \leq K n^{-1} \big(1+E|x_{\kappa(n,s)}^n|^p \big),
$$
for every $n \in \mathbb{N}$ and $s \in [0,T]$, where $K:=K(L,p, m,d)$.
\end{lem}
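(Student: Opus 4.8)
The plan is to mirror the proof of Lemma \ref{lem:b3:rate} for $\sigma_3$, the one genuinely new ingredient being the outer integration against $\nu(dz_2)$. Writing $x:=x_{\kappa(n,r)}^n$ and denoting by $G^i(r,z_1,z_2)$ the bracketed integrand in the definition of $\gamma_3^i$, I would first use the equivalence of norms on $\mathbb{R}^d$ to reduce to
\[
E\int_Z|\gamma_3(x_{\kappa(n,s)}^n,z_2)|^p\nu(dz_2)\leq K\sum_{i=1}^d E\int_Z|\gamma_3^i(x_{\kappa(n,s)}^n,z_2)|^p\nu(dz_2),
\]
and then, for each fixed $z_2$, view $\gamma_3^i(x_{\kappa(n,s)}^n,z_2)=\int_{\kappa(n,s)}^s\int_Z G^i(r,z_1,z_2)N(dr,dz_1)$ as a scalar Poisson integral in $(r,z_1)$. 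Splitting $N(dr,dz_1)=\tilde N(dr,dz_1)+\nu(dz_1)dr$ and applying Lemma \ref{lem:BGD:jump} to the $\tilde N$-part yields, for each $z_2$, three contributions: $E\big(\int_{\kappa(n,s)}^s\int_Z|G^i|^2\nu(dz_1)dr\big)^{p/2}$, $E\int_{\kappa(n,s)}^s\int_Z|G^i|^p\nu(dz_1)dr$, and the compensator term $E\big(\int_{\kappa(n,s)}^s\int_Z|G^i|\nu(dz_1)dr\big)^p$.

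For each of the three terms I would extract the smallness of the interval $[\kappa(n,s),s]$, whose length is at most $1/n$, by an application of H\"older's (respectively Jensen's) inequality on the finite measure $\nu(dz_1)dr$ of total mass at most $\nu(Z)/n$: this bounds the first term by $(\nu(Z)/n)^{p/2-1}\int_{\kappa(n,s)}^s\int_Z|G^i|^p\nu(dz_1)dr$ and the third by $(\nu(Z)/n)^{p-1}\int_{\kappa(n,s)}^s\int_Z|G^i|^p\nu(dz_1)dr$. After this step all three terms are governed by $E\int_Z\int_{\kappa(n,s)}^s\int_Z|G^i|^p\nu(dz_1)dr\,\nu(dz_2)$, and Fubini's theorem lets me push the outer $\nu(dz_2)$-integration all the way inside, so that everything rests on the core estimate $\int_Z|G^i(r,z_1,z_2)|^p\nu(dz_2)$. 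To obtain it I would split $G^i$ by the triangle inequality into the increment $\gamma^i(x+\gamma(x,z_1),z_2)-\gamma^i(x,z_2)$ and the linear term $\sum_{u=1}^d\frac{\partial\gamma^i(x,z_2)}{\partial x^u}\gamma^u(x,z_1)$; the $\nu(dz_2)$-integral of the $p$-th power of the increment is bounded by $K(1+|x|^p+|\gamma(x,z_1)|^p)$ through Assumption A-4 applied at both $x$ and $x+\gamma(x,z_1)$, while that of the linear term is bounded by $K|\gamma(x,z_1)|^p$ through the integrated derivative bound of Remark \ref{rem:der:bounded}, giving $\int_Z|G^i(r,z_1,z_2)|^p\nu(dz_2)\leq K(1+|x|^p+|\gamma(x,z_1)|^p)$.

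It then remains to integrate in $z_1$, using $\nu(Z)<\infty$ together with Assumption A-4 so that $\int_Z|\gamma(x,z_1)|^p\nu(dz_1)\leq K(1+|x|^p)$, and in $r$ over an interval of length at most $1/n$ on which $x_{\kappa(n,r)}^n=x_{\kappa(n,s)}^n$; collecting the prefactors leaves the three terms of orders $n^{-p/2}$, $n^{-1}$ and $n^{-p}$, all dominated by $n^{-1}$ since $p\geq2$, which is exactly the assertion $E\int_Z|\gamma_3(x_{\kappa(n,s)}^n,z_2)|^p\nu(dz_2)\leq Kn^{-1}(1+E|x_{\kappa(n,s)}^n|^p)$. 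The main obstacle, and the only real departure from Lemma \ref{lem:b3:rate}, is the interchange of the $\nu(dz_2)$-integration with the stochastic and temporal integrations: unlike $\sigma$, whose first derivatives are bounded pointwise (Remark \ref{rem:der:bounded}), the coefficient $\gamma$ is controlled in the $z_2$-variable only after integration against $\nu$ (by Assumption A-2 in the $\mathcal{L}^2$-sense and by Remark \ref{rem:der:bounded} in the integrated sense), so the order of integration must be arranged — extracting the interval length first, then invoking Fubini — so that the $\nu(dz_2)$-integral always falls on a quantity for which the assumptions actually furnish a bound.
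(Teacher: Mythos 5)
Your proposal is correct and follows essentially the same route as the paper's proof: equivalence of norms, the triangle-inequality split of the It\^o--Taylor remainder into an increment plus a first-order term, the compensation $N(dr,dz_1)=\tilde N(dr,dz_1)+\nu(dz_1)dr$, Lemma \ref{lem:BGD:jump} on the martingale part, H\"older's inequality to extract the interval length $n^{-1}$, and Assumption A-4 together with Remarks \ref{as:sde:growth} and \ref{rem:der:bounded}, producing the same three orders $n^{-p/2}$, $n^{-1}$ and $n^{-p}$. The only deviation is one of ordering: the paper collapses the $z_2$-dependence first, bounding the integrand by $K|\gamma(x_{\kappa(n,r)}^n,z_1)|$ before invoking Lemma \ref{lem:BGD:jump}, whereas you apply that lemma at fixed $z_2$ and integrate $\nu(dz_2)$ last --- a rearrangement that, as you observe, better respects the merely integrated control of $\gamma$ in $z_2$, and your reliance on a $p$-th power bound for $\int_Z|\partial\gamma^i/\partial x^u|^p\nu(dz_2)$ is exactly the reading of Remark \ref{rem:der:bounded} that the paper itself uses in Lemmas \ref{lem:c1:rate} and \ref{lem:c2:rate}.
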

\begin{proof}
On using Assumption A-2 and Remark \ref{rem:der:bounded}, one writes
\begin{align*}
&E\int_Z|\gamma_3(x_{\kappa(n,s)}^n, z_2)|^p \nu(dz_2) \leq K E\int_Z\Big(\sum_{i=1}^d|\gamma_3^{i}(x_{\kappa(n,s)}^n, z_2)|^2\Big)^\frac{p}{2} \nu(dz_2)
\\
&= K E\int_Z\sum_{i=1}^d\Big| \int_{\kappa(n,s)}^s \int_{Z} \Big( \gamma^{i}(x_{\kappa(n,r)}^n+\gamma(x_{\kappa(n,r)}^n, z_1), z_2)-\gamma^{i}(x_{\kappa(n,r)}^n, z_2)
\\
& \hspace{2cm}-\sum_{u=1}^d \frac{\partial \gamma^{i}(x_{\kappa(n,r)}^n,z_2)}{\partial x^u} \gamma^u(x_{\kappa(n,r)}^n,z_1)
 \Big) N(dr,dz_1)\Big|^p \nu(dz_2)
\\
&\leq K E\int_Z\sum_{i=1}^d\Big( \int_{\kappa(n,s)}^s \int_{Z} \Big(| \gamma^{i}(x_{\kappa(n,r)}^n+\gamma(x_{\kappa(n,r)}^n, z_1), z_2)-\gamma^{i}(x_{\kappa(n,r)}^n, z_2)|
\\
& \hspace{2cm}+\sum_{u=1}^d \big|\frac{\partial \gamma^{i}(x_{\kappa(n,r)}^n,z_2)}{\partial x^u}\big| |\gamma^u(x_{\kappa(n,r)}^n,z_1)|
 \Big) N(dr,dz_1)\Big)^p\nu(dz_2)
\\
&\leq K E\int_Z \Big( \int_{\kappa(n,s)}^s \int_{Z} |\gamma(x_{\kappa(n,r)}^n, z_1)| N(dr,dz_1)\Big)^p \nu(dz_2)
\end{align*}
which due to Assumption A-4, Lemma \ref{lem:BGD:jump} and Remark \ref{as:sde:growth} gives,
\begin{align*}
&  E\int_Z|\gamma_3(x_{\kappa(n,s)}^n, z_2)|^p  \nu(dz_2)
\\
& \leq K  \int_ZE\Big( \int_{\kappa(n,s)}^s \int_{Z} |\gamma(x_{\kappa(n,r)}^n,z_1)|  \tilde N(dr,dz_1)\Big)^p \nu(dz_2)
\\
& +   K \int_Z E\Big( \int_{\kappa(n,s)}^s \int_{Z} |\gamma(x_{\kappa(n,r)}^n,z_1)| \nu(dz_1)dr\Big)^p \nu(dz_2)
\\
& \leq  K n^{-\frac{p}{2}} (1+E|x_{\kappa(n,s)}^n|^p ) +K n^{-1} (1+E|x_{\kappa(n,s)}^n|^p) +K n^{-p} \big(1+E|x_{\kappa(n,s)}^n|^p)
\end{align*}
for every $n \in \mathbb{N}$ and $s\in [0,T]$. This completes the proof.
\end{proof}
The following corollary is a consequence of Lemmas [\ref{lem:c1:rate}, \ref{lem:c2:rate}, \ref{lem:c3:rate}].

\begin{cor} \label{lem:tilde c:momentbound}
Let Assumptions A-2 to A-4 hold, then
\begin{align*}
E\int_Z|\tilde{\gamma}(x_{\kappa(n,s)}^n, z_2)|^p  \nu(dz_2) & \leq   K(1+E|x_{\kappa(n,s)}^n|^p),
\end{align*}
for every $n \in \mathbb{N}$ and $s \in [0,T]$, where $K:=K(L,p,m,d)$.
\end{cor}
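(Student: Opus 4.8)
The plan is to exploit the additive decomposition \eqref{eq:cn:tilde} of the tamed jump coefficient, namely $\tilde{\gamma}(x_{\kappa(n,s)}^n, z_2)=\gamma(x_{\kappa(n,s)}^n, z_2)+\gamma_1(x_{\kappa(n,s)}^n, z_2)+\gamma_2(x_{\kappa(n,s)}^n, z_2)+\gamma_3(x_{\kappa(n,s)}^n, z_2)$, and to bound the $\nu$-integral of the $p$-th power of each summand separately. The only ingredient needed beyond the three preceding lemmas is the control of the leading term $\gamma$, which is supplied directly by Assumption A-4.

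First I would apply the elementary convexity inequality $|a+b+c+d|^p\leq 4^{p-1}(|a|^p+|b|^p+|c|^p+|d|^p)$ pointwise in $z_2$, integrate against $\nu(dz_2)$, and take expectations, which yields
$$
E\int_Z|\tilde{\gamma}(x_{\kappa(n,s)}^n, z_2)|^p\nu(dz_2)\leq K\sum_{\ell=0}^{3}E\int_Z|\gamma_\ell(x_{\kappa(n,s)}^n, z_2)|^p\nu(dz_2),
$$
where I write $\gamma_0:=\gamma$ for the leading term. Next I would estimate the four resulting pieces. The term involving $\gamma_0=\gamma$ is handled by Assumption A-4, which gives $\int_Z|\gamma(x,z_2)|^p\nu(dz_2)\leq L(1+|x|^p)$ and hence, after taking expectation, a contribution of the form $K(1+E|x_{\kappa(n,s)}^n|^p)$. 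The remaining three terms are precisely the quantities estimated in Lemmas \ref{lem:c1:rate}, \ref{lem:c2:rate} and \ref{lem:c3:rate}, whose bounds are $Kn^{-p/2}(1+E|x_{\kappa(n,s)}^n|^p)$, $Kn^{-1}(1+E|x_{\kappa(n,s)}^n|^p)$ and $Kn^{-1}(1+E|x_{\kappa(n,s)}^n|^p)$ respectively.

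To conclude, I would note that $n\geq 1$ and that every exponent of $n$ appearing above is nonpositive, so each factor $n^{-p/2}$ and $n^{-1}$ is bounded above by $1$. Summing the four estimates therefore collapses the negative powers of $n$ and produces $E\int_Z|\tilde{\gamma}(x_{\kappa(n,s)}^n, z_2)|^p\nu(dz_2)\leq K(1+E|x_{\kappa(n,s)}^n|^p)$ with a constant $K=K(L,p,m,d)$, as claimed.

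There is no genuine obstacle in this argument: the statement is a routine corollary assembled from the three lemmas together with Assumption A-4, and the heavy estimation work has already been carried out there. The only point deserving a moment's care is that the splitting commutes correctly with the $\nu(dz_2)$-integration, i.e.\ that each of the four integrands is integrable against $\nu$ so that the decomposition is legitimate; this is ensured by the finiteness established in the individual lemmas together with $\nu(Z)<\infty$. Hence the proof reduces to collecting the previously established bounds and discarding the favourable negative powers of $n$.
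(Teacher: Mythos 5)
Your proposal is correct and coincides with the paper's intended argument: the paper states the corollary without detail as a direct consequence of Lemmas \ref{lem:c1:rate}, \ref{lem:c2:rate} and \ref{lem:c3:rate}, and your write-up simply makes explicit the decomposition \eqref{eq:cn:tilde}, the bound on the leading term from Assumption A-4, and the discarding of the negative powers of $n$. Nothing is missing.
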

\begin{lem} \label{lem:one-step:norate}
Let Assumptions A-2 to A-4 hold, then
$$
\int_0^uE|x_s^n-x_{\kappa(n,s)}^n|^p ds \leq K n^{-1}\big(1+  \int_0^u  E|x_{\kappa(n,s)}^n|^p ds\big),
$$
for any $u \in [0,T]$, $n \in \mathbb{N}$, where $K:=K(L,p,m,d)$.
\end{lem}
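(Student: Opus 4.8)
The plan is to start from the defining equation \eqref{eq:milstein} of the scheme and, for $s\in[0,T]$, write the one-step increment as
$$
x_s^n-x_{\kappa(n,s)}^n = \int_{\kappa(n,s)}^s \tilde b^n(x_{\kappa(n,r)}^n)\,dr + \int_{\kappa(n,s)}^s \tilde\sigma(x_{\kappa(n,r)}^n)\,dw_r + \int_{\kappa(n,s)}^s\int_Z \tilde\gamma(x_{\kappa(n,r)}^n,z_2)\,\tilde N(dr,dz_2),
$$
and then to estimate the $p$-th moment of each of the three summands separately after the elementary inequality $|a+b+c|^p\le 3^{p-1}(|a|^p+|b|^p+|c|^p)$. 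Throughout one exploits that $s-\kappa(n,s)\le n^{-1}$ and that $\kappa(n,r)=\kappa(n,s)$ for every $r\in[\kappa(n,s),s]$, so that the already established moment bounds may be invoked with argument $x_{\kappa(n,s)}^n$.

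For the drift term, the taming estimate $|\tilde b^n(x)|\le n^{1/2}$ of Remark \ref{as:scheme:a^n} together with H\"older's inequality gives
$$
E\Big|\int_{\kappa(n,s)}^s \tilde b^n(x_{\kappa(n,r)}^n)\,dr\Big|^p \le (s-\kappa(n,s))^{p-1}\, E\int_{\kappa(n,s)}^s |\tilde b^n(x_{\kappa(n,r)}^n)|^p\,dr \le K n^{-\frac{p}{2}},
$$
which for $p\ge 2$ is bounded by $Kn^{-1}$ and requires no moment term at all. For the diffusion term I would apply the elementary inequality for stochastic integrals (Burkholder--Davis--Gundy), then H\"older in $dr$, and finally Corollary \ref{lem:tildeb:momentbound}, obtaining a bound of order $K n^{-\frac{p}{2}}(1+E|x_{\kappa(n,s)}^n|^p)$.

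The jump term is the one requiring the most care and is the main obstacle. Applying Lemma \ref{lem:BGD:jump} produces two contributions. The term $E\int_{\kappa(n,s)}^s\int_Z|\tilde\gamma(x_{\kappa(n,r)}^n,z_2)|^p\nu(dz_2)\,dr$ is handled directly by Corollary \ref{lem:tilde c:momentbound} together with the factor $s-\kappa(n,s)\le n^{-1}$, yielding $Kn^{-1}(1+E|x_{\kappa(n,s)}^n|^p)$. The quadratic contribution $E\big(\int_{\kappa(n,s)}^s\int_Z|\tilde\gamma|^2\nu(dz_2)\,dr\big)^{p/2}$ is more delicate: here I would first use H\"older in $dr$ and then convert the inner quantity $(\int_Z|\tilde\gamma|^2\nu(dz_2))^{p/2}$ into $K\int_Z|\tilde\gamma|^p\nu(dz_2)$ by Jensen's inequality, which is legitimate precisely because $\nu(Z)<\infty$, before again invoking Corollary \ref{lem:tilde c:momentbound}; this gives a bound of order $Kn^{-\frac{p}{2}}(1+E|x_{\kappa(n,s)}^n|^p)$.

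Collecting the three estimates and using $n^{-\frac{p}{2}}\le n^{-1}$ for $p\ge 2$ yields the pointwise bound $E|x_s^n-x_{\kappa(n,s)}^n|^p\le Kn^{-1}(1+E|x_{\kappa(n,s)}^n|^p)$, which is in fact stronger than required; integrating over $s\in[0,u]$ and absorbing the resulting factor $T$ into $K$ then completes the proof.
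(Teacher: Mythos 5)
Your proposal is correct and follows essentially the same route as the paper: the same three-term decomposition of the increment, H\"{o}lder's inequality and the taming bound $|\tilde b^n|\le n^{1/2}$ for the drift, the elementary stochastic-integral inequality with Corollary \ref{lem:tildeb:momentbound} for the diffusion, and Lemma \ref{lem:BGD:jump} with Corollary \ref{lem:tilde c:momentbound} for the jump part, concluding with the pointwise bound and integration in $s$. The only difference is cosmetic: you make explicit the Jensen/H\"{o}lder step (valid since $\nu(Z)<\infty$) converting $\big(\int_Z|\tilde\gamma|^2\nu(dz_2)\big)^{p/2}$ into $K\int_Z|\tilde\gamma|^p\nu(dz_2)$, which the paper leaves implicit.
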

\begin{proof}
Due to H\"{o}lder's inequality,  Lemma \ref{lem:BGD:jump} and Remark \ref{as:scheme:a^n},   one observes that
\begin{align*}
E|&x_s^n-x_{\kappa(n,s)}^n|^p  = E\Big|\int^{s}_{\kappa(n,s)} \tilde{b}^n(x_{\kappa(n,r)}^n)dr+\int^{s}_{\kappa(n,s)} \tilde{\sigma}(x_{\kappa(n,r)}^n)dw_r
\\
& +\int^{s}_{\kappa(n,s)}\int_Z \tilde{\gamma}(x_{\kappa(n,r)}^n, z_2)\tilde N(dr,dz_2)\Big|^p
\\
& \leq K E\Big|\int^{s}_{\kappa(n,s)} \tilde{b}^n(x_{\kappa(n,r)}^n)dr\Big|^p+K E\Big|\int^{s}_{\kappa(n,s)} \tilde{\sigma}(x_{\kappa(n,r)}^n)dw_r\Big|^p
\\
& + K E\Big|\int^{s}_{\kappa(n,s)}\int_Z \tilde{\gamma}(x_{\kappa(n,r)}^n, z_2)\tilde N(dr,dz_2)\Big|^p
\\
& \leq K |s-\kappa(n,s)|^{p-1}E \int_{\kappa(n,s)}^{s}|\tilde{b}^n(x_{\kappa(n,r)}^n)|^pdr
\\
&+ K |s-\kappa(n,s)|^{\frac{p}{2}-1}\int_{\kappa(n,s)}^{s}E|\tilde{\sigma}(x_{\kappa(n,r)}^n)|^pdr
\\
& \quad + K |s-\kappa(n,s)|^{\frac{p}{2}-1}\int^{s}_{\kappa(n,s)}E\Big(\int_Z |\tilde{\gamma}(x_{\kappa(n,r)}^n, z_2)|^2  \nu(dz_2)\Big)^\frac{p}{2} dr
\\
& +K \int^{s}_{\kappa(n,s)}E\int_Z |\tilde{\gamma}(x_{\kappa(n,r)}^n, z_2)|^p  \nu(dz_2)dr
\end{align*}
and then on using Corollaries [\ref{lem:tildeb:momentbound}, \ref{lem:tilde c:momentbound}], one obtains
\begin{align*}
E|x_s^n&-x_{\kappa(n,s)}^n|^p  \leq K n^{-\frac{p}{2}}+ K n^{-\frac{p}{2}} \big(1+E|x_{\kappa(n,s)}^n|^p\big)
\\
& +K n^{-\frac{p}{2}}\big(1+E|x_{\kappa(n,s)}^n|^p\big)+K n^{-1}\big(1+E|x_{\kappa(n,s)}^n|^p\big)
\end{align*}
for any $s \in [0,T]$, $n \in \mathbb{N}$.  This completes the proof.
\end{proof}
\begin{lem} \label{lem:scheme:momentbound}
Let Assumptions A-1 to A-4 hold, then
$$
\sup_{n \in \mathbb{N}}E\sup_{0 \leq t \leq T}|x_t^n|^p \leq K,
$$
where $K:=K(L,T,p,m,d,E|\xi|^p)$.
\end{lem}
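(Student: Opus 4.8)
The plan is to apply the jump--diffusion It\^o formula to the smooth Lyapunov function $V(x):=(1+|x|^2)^{p/2}$ along the scheme \eqref{eq:milstein}, take expectations so that the genuine martingale terms vanish, estimate the remaining finite-variation terms, and close with Gronwall's inequality. I would record that $\nabla V(x)=p(1+|x|^2)^{p/2-1}x$ and $|D^2V(x)|\le K(1+|x|^2)^{p/2-1}$, and that all stochastic integrals appearing are true martingales by \eqref{eq:scheme:mb:finite}. After taking expectations one is then left with a drift term $pE\int_0^t (1+|x_s^n|^2)^{p/2-1}x_s^n\tilde b^n(x_{\kappa(n,s)}^n)\,ds$, an It\^o-correction term controlled by $E\int_0^t(1+|x_s^n|^2)^{p/2-1}|\tilde\sigma(x_{\kappa(n,s)}^n)|^2\,ds$, and a jump-compensator term controlled, via a second-order Taylor estimate $|V(x+y)-V(x)-\nabla V(x)y|\le K(1+|x|^2+|y|^2)^{p/2-1}|y|^2$, by $E\int_0^t\int_Z(1+|x_s^n|^2+|\tilde\gamma|^2)^{p/2-1}|\tilde\gamma(x_{\kappa(n,s)}^n,z_2)|^2\nu(dz_2)\,ds$.

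The drift term is the main obstacle, since $\tilde b^n$ is only bounded by $n^{1/2}$ while $b$ grows super-linearly. Here I would split $x_s^n=x_{\kappa(n,s)}^n+(x_s^n-x_{\kappa(n,s)}^n)$. For the first piece, the taming structure \eqref{eq:an:tilde} together with Remark \ref{as:sde:growth} gives $x_{\kappa(n,s)}^n\tilde b^n(x_{\kappa(n,s)}^n)\le L(1+|x_{\kappa(n,s)}^n|^2)$, and Young's inequality turns $(1+|x_s^n|^2)^{p/2-1}(1+|x_{\kappa(n,s)}^n|^2)$ into $K[V(x_s^n)+V(x_{\kappa(n,s)}^n)]$. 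For the increment piece I would insert the one-step expansion of $x_s^n-x_{\kappa(n,s)}^n$ from \eqref{eq:milstein}: its drift contribution equals $(s-\kappa(n,s))|\tilde b^n(x_{\kappa(n,s)}^n)|^2$, which by \eqref{eq:an:tilde} and $s-\kappa(n,s)\le n^{-1}$ is bounded by $1$, while its two martingale contributions, paired with the leading $\mathscr F_{\kappa(n,s)}$-measurable factor $(1+|x_{\kappa(n,s)}^n|^2)^{p/2-1}\tilde b^n(x_{\kappa(n,s)}^n)$, have zero conditional expectation given $\mathscr F_{\kappa(n,s)}$. The cross terms produced by replacing $(1+|x_s^n|^2)^{p/2-1}$ with $(1+|x_{\kappa(n,s)}^n|^2)^{p/2-1}$ and bounding the difference by the mean value theorem are of higher order in $s-\kappa(n,s)$ and get absorbed using the one-step bound of Lemma \ref{lem:one-step:norate}. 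I expect this to be the most delicate step, because a crude estimate using only $|\tilde b^n|\le n^{1/2}$ together with the one-step bound fails for $p>2$.

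The diffusion and jump terms are routine: Young's inequality followed by Corollaries \ref{lem:tildeb:momentbound} and \ref{lem:tilde c:momentbound}, which bound $E|\tilde\sigma(x_{\kappa(n,s)}^n)|^p$ and $E\int_Z|\tilde\gamma(x_{\kappa(n,s)}^n,z_2)|^p\nu(dz_2)$ by $K(1+E|x_{\kappa(n,s)}^n|^p)$ (and implicitly invoke Remark \ref{rem:der:bounded}), bounds both contributions by $KE\int_0^t[V(x_s^n)+1+E|x_{\kappa(n,s)}^n|^p]\,ds$. Collecting all estimates and setting $\phi(u):=\sup_{0\le t\le u}EV(x_t^n)$, and using $EV(x_{\kappa(n,s)}^n)\le\phi(s)$, I obtain $EV(x_t^n)\le EV(\xi)+K\int_0^t(1+\phi(s))\,ds$. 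Since $EV(\xi)\le K(1+E|\xi|^p)<\infty$ by Assumption A-1, Gronwall's inequality yields $\phi(T)\le K$ uniformly in $n$, hence $\sup_{0\le t\le T}E|x_t^n|^p\le K$.

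Finally, to replace $\sup_t E$ by $E\sup_t$, I would return to the It\^o representation of $V(x_t^n)$ and take the supremum over $t\in[0,T]$ before taking expectations. The finite-variation terms are bounded exactly as above using the uniform bound just obtained, while for the two martingale terms I would apply the Burkholder--Davis--Gundy inequality (Lemma \ref{lem:BGD:jump} for the compensated Poisson integral and its classical analogue for the Wiener integral). Young's inequality then splits off a factor $\tfrac12E\sup_{0\le t\le T}V(x_t^n)$, which is finite for each fixed $n$ by \eqref{eq:scheme:mb:finite} and can therefore be absorbed into the left-hand side, the residual integrals being again controlled by $\sup_sEV(x_s^n)\le K$. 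This delivers $E\sup_{0\le t\le T}V(x_t^n)\le K$ uniformly in $n$, which is the asserted estimate.
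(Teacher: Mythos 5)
Your overall architecture (It\^o's formula applied to a $p$-th power functional, the taming split $x_s^n=x_{\kappa(n,s)}^n+(x_s^n-x_{\kappa(n,s)}^n)$ in the drift term, Corollaries \ref{lem:tildeb:momentbound} and \ref{lem:tilde c:momentbound} for the Milstein coefficients, BDG plus absorption via \eqref{eq:scheme:mb:finite}, Gronwall) matches the paper's proof, and your treatment of the Wiener part of the increment via zero conditional expectation is fine. The genuine gap is in your handling of the \emph{jump} part of the cross term, which is exactly the delicate point the paper's $C_7$-estimate is built for. Your claim that the remainder produced by replacing $(1+|x_s^n|^2)^{p/2-1}$ with $(1+|x_{\kappa(n,s)}^n|^2)^{p/2-1}$ is ``of higher order in $s-\kappa(n,s)$ and absorbed by Lemma \ref{lem:one-step:norate}'' fails in the presence of jumps: since $\nu(Z)<\infty$, a one-step increment contains a jump of size $O(1)$ with probability of order $n^{-1}$, so $E|x_s^n-x_{\kappa(n,s)}^n|^r\asymp n^{-1}$ for \emph{every} $r\ge 2$ (this is visible in the proof of Lemma \ref{lem:one-step:norate}, where the dominant term is $Kn^{-1}$, not $Kn^{-r/2}$). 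Your remainder is bounded by $n^{1/2}E\big[(1+|x_s^n|+|x_{\kappa(n,s)}^n|)^{p-3}|x_s^n-x_{\kappa(n,s)}^n|^2\big]$ (up to lower-order pieces), and any distribution of exponents in H\"older's inequality leaves the increment factors in $L^{q_1},L^{q_2}$ with $1/q_1+1/q_2=3/p$, hence a bound of order $n^{1/2-3/p}$, which diverges for $p>6$ --- and the lemma must hold for $p\ge 6(\chi+2)>12$. (The same computation with $\gamma\equiv 0$ gives $n^{-1/2}$, which is why your step is correct in the continuous case but not here.) A rescue within your framework would require a \emph{conditional} one-step bound $E\big[|x_s^n-x_{\kappa(n,s)}^n|^2\,\big|\,\mathscr F_{\kappa(n,s)}\big]\le Kn^{-1}\big(1+|x_{\kappa(n,s)}^n|^2\big)$ together with a frozen/increment split of the polynomial weight, none of which you state or prove.

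The paper instead never pairs the factor $n^{1/2}$ with a low power of the increment: it splits $|x_s^n|^{p-2}\le K|x_{\kappa(n,s)}^n|^{p-2}+K|x_s^n-x_{\kappa(n,s)}^n|^{p-2}$ (its term $C_7=C_{7a}+C_{7b}$); in $C_{7a}$ the predictable weight $|x_{\kappa(n,s)}^n|^{p-2}$ is moved \emph{inside} the Poisson integral, the first-moment case of Lemma \ref{lem:BGD:jump} gives $E\big(\int\!\!\int |x_{\kappa}^n|^{2p-4}|\tilde\gamma|^2\nu\,dr\big)^{1/2}$, and Young's inequality against $\tfrac18E\sup_t|x_t^n|^p$ leaves powers that, after raising to $p/2$ and using H\"older in $z$, produce $n\cdot n^{-1}=O(1)$; in $C_{7b}$ Young's inequality pushes everything to the full power $p$, where $nE|\!\int\!\!\int\tilde\gamma\,\tilde N|^p\lesssim n\cdot n^{-1}$ is harmless. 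A secondary problem: in your second pass for $E\sup_t$, you assert the finite-variation (drift) term is ``bounded exactly as above,'' but taking the supremum inside destroys the conditional-expectation cancellation you used in the first pass, forcing absolute values and resurrecting the same divergent $n^{1/2}$-times-jump-increment term; the paper avoids a two-pass structure altogether by running the sup-inside estimate from the start with precisely the $C_{7a}$/$C_{7b}$ machinery, and you would need it (or conditional Cauchy--Schwarz estimates) there as well.
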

\begin{proof}
By  It\^{o}'s formula,
\begin{align}
|x_t^n|^p = &|\xi|^p+ p \int_0^{t} |x_s^n|^{p-2} x_s^n \tilde{b}^n(x_{\kappa(n,s)}^n) ds + p\int_{0}^{t} |x_s^n|^{p-2} x_s^n \tilde{\sigma}(x_{\kappa(n,s)}^n)  dw_s  \notag
\\
& + \frac{p(p-2)}{2} \int_{0}^{t} |x_s^n|^{p-4}| \tilde{\sigma}^{*}(x_{\kappa(n,s)}^n)x_s^n|^2 ds  + \frac{p}{2} \int_{0}^{t} |x_s^n|^{p-2}|\tilde{\sigma}(x_{\kappa(n,s)}^n)|^2 ds  \notag
\\
&+  p \int_{0}^{t} \int_{Z} |x_s^n|^{p-2} x_{s}^n \tilde  \gamma(x_{\kappa(n,s)}^n,z_2)   \tilde N(ds,dz_2) \notag
\\
+\int_{0}^{t} \int_{Z}& \big( |x_{s}^n+\tilde \gamma(x_{\kappa(n,s)}^n,z_2)|^p-|x_{s}^n|^p-p|x_{s}^n|^{p-2} x_{s}^n \tilde \gamma(x_{\kappa(n,s)}^n,z_2) \big)N(ds,dz_2)
\label{eq:rr}
\end{align}
almost surely for any $t \in [0,T]$ and $n \in \mathbb{N}$. For estimating the second term on the right hand side of the above equation, one writes
$$
x_s^n \tilde{b}^n(x_{\kappa(n,s)}^n)=(x_s^n-x_{\kappa(n,s)}^n) \tilde{b}^n(x_{\kappa(n,s)}^n)+x_{\kappa(n,s)}^n \tilde{b}^n(x_{\kappa(n,s)}^n)
$$
which on using  Remark \ref{as:sde:growth} yields
\begin{align}
x_s^n \tilde{b}^n(x_{\kappa(n,s)}^n)&\leq\Big(\int_{\kappa(n,s)}^{s}\tilde{b}^n(x_{\kappa(n,r)}^n)dr + \int_{\kappa(n,s)}^{s}\tilde{\sigma}(x_{\kappa(n,r)}^n)dw_r \notag
\\
& +\int_{\kappa(n,s)}^{s}\int_{Z}\tilde{\gamma}(x_{\kappa(n,r)}^n,z_2)\tilde N(dr,dz_2)\Big)\tilde{b}^n(x_{\kappa(n,s)}^n) + L(1+|x_{\kappa(n,s)}^n|^2)\notag
\end{align}
and then by using  Remark \ref{as:scheme:a^n} and Schwartz inequality, one obtains
\begin{align}
x_s^n \tilde{b}^n(x_{\kappa(n,s)}^n) & \leq |\tilde{b}^n(x_{\kappa(n,s)}^n)|\int_{\kappa(n,s)}^{s}|\tilde{b}^n(x_{\kappa(n,r)}^n)|dr \notag
\\
& + |\tilde{b}^n(x_{\kappa(n,s)}^n)|\Big|\int_{\kappa(n,s)}^{s}\tilde{\sigma}(x_{\kappa(n,r)}^n)dw_r\Big| \notag
\\
& + |\tilde{b}^n(x_{\kappa(n,s)}^n)|\Big|\int_{\kappa(n,s)}^{s}\int_{Z}\tilde{\gamma}(x_{\kappa(n,r)}^n,z_2)\tilde N(dr,dz_2)\Big|+L(1+|x_{\kappa(n,s)}^n|^2)\notag
\\
&\leq 1 + n^{\frac{1}{2}}\Big|\int_{\kappa(n,s)}^{s}\tilde{\sigma}(x_{\kappa(n,r)}^n)dw_r\Big| \notag
\\
& + n^{\frac{1}{2}} \Big|\int_{\kappa(n,s)}^{s}\int_{Z}\tilde{\gamma}(x_{\kappa(n,r)}^n,z_2)\tilde N(dr,dz_2)\Big| +L(1+|x_{\kappa(n,s)}^n|^2)\notag
\end{align}
almost surely for any $s \in [0,T]$ and $n \in \mathbb{N}$. Further, due to Young's inequality,
\begin{align}
|x_s^n|^{p-2}x_s^n \tilde{b}^n(x_{\kappa(n,s)}^n) & \leq 1+ K|x_s^n|^{p} +K|x_{\kappa(n,s)}^n|^p + n^{\frac{p}{4}}\Big|\int_{\kappa(n,s)}^{s}\tilde{\sigma}(x_{\kappa(n,r)}^n)dw_r\Big|^{\frac{p}{2}}\notag
\\
&+ n^{\frac{1}{2}} |x_s^n|^{p-2} \Big|\int_{\kappa(n,s)}^{s}\int_{Z}\tilde{\gamma}(x_{\kappa(n,r)}^n,z_2)\tilde N(dr,dz_2)\Big| \label{eq:scheme:moment:1}
\end{align}
almost surely for any $s \in [0,T]$ and $n \in \mathbb{N}$.

Moreover, since the map $y \rightarrow |y|^p$ is of class $C^2$, by the formula for the remainder,
\begin{equation}
|y_1+y_2|^p-|y_1|^p-p|y_1|^{p-2}y_1y_2 \leq K \int_0^1 |y_1+qy_2|^{p-2}|y_2|^2 dq \leq K(|y_1|^{p-2}|y_2|^2+|y_2|^p) \label{eq:y1y2}
\end{equation}
for any $y_1,y_2 \in \mathbb{R}^d$.

Hence, one first substitutes the estimates from \eqref{eq:scheme:moment:1} in \eqref{eq:rr} and uses \eqref{eq:y1y2} to estimate the last term of  \eqref{eq:rr} which  on taking suprema  and expectation gives
\begin{align}
E\sup_{0 \leq t \leq u}|x_t^n|^p & \leq  E|\xi|^p+K+ K \int_{0}^{u}E\sup_{0 \leq r \leq s}|x_r^n|^{p}ds + K E\int_{0}^{u} |x_s^n|^{p-2}| \tilde{\sigma}(x_{\kappa(n,s)}^n)|^2 ds \notag
\\
&+ K n^{\frac{p}{4}} \int_{0}^{u} E\Big|\int_{\kappa(n,s)}^{s}\tilde{\sigma}(x_{\kappa(n,r)}^n)dw_r\Big|^{\frac{p}{2}}ds \notag
\\
& + pE \sup_{0 \leq t \leq u}\Big|\int_{0}^{t} |x_s^n|^{p-2} x_s^n \tilde{\sigma}(x_{\kappa(n,s)}^n)  dw_s\Big|    \notag
\\
&+p E  \sup_{0 \leq t \leq u} \Big|\int_{0}^{t} \int_{Z} |x_s^n|^{p-2} x_s^n\tilde  \gamma(x_{\kappa(n,s)}^n,z_2)  \tilde N(ds,dz_2)\Big|  \notag
\\
&+E\int_{0}^{u} \int_{Z}\big\{ |x_{s}^n|^{p-2} |\tilde \gamma(x_{\kappa(n,s)}^n,z_2)|^2+ |\tilde \gamma(x_{\kappa(n,s)}^n,z_2)|^p\big\}N(ds,dz_2) \notag
\\
& +K n^{\frac{1}{2}} \int_{0}^{u} E|x_s^n|^{p-2} \Big|\int_{\kappa(n,s)}^{s}\int_{Z}\tilde{\gamma}(x_{\kappa(n,r)}^n,z_2) \tilde N(dr,dz_2)\Big|ds \notag
\\
= & C_1+C_2+C_3+C_4+C_5+C_6+C_7. \label{eq:C1+...+C7}
\end{align}
Here, $C_1$ is given by
$$
C_1:=E|\xi|^p+K+ K \int_{0}^{u}E\sup_{0 \leq r \leq s}|x_r^n|^{p}ds
$$
for any $u \in [0,T]$. Due to Young's inequality and Corollary \ref{lem:tildeb:momentbound},  $C_2$ can be estimated by
\begin{align}
C_2:= & K E\int_{0}^{u} |x_s^n|^{p-2}| \tilde{\sigma}(x_{\kappa(n,s)}^n)|^2 ds \leq K \int_{0}^{u} E|x_s^n|^p ds+K\int_{0}^{u}E| \tilde{\sigma}(x_{\kappa(n,s)}^n)|^p ds \notag
\\
\leq & K+K\int_{0}^{u} E|x_{\kappa(n,s)}^n|^p ds. \label{eq:C1}
\end{align}
For $C_3$, one uses an elementary stochastic inequality and Corollary \ref{lem:tildeb:momentbound} to obtain
\begin{align}
C_3& := K n^{\frac{p}{4}} \int_{0}^{u} E\Big|\int_{\kappa(n,s)}^{s}\tilde{\sigma}(x_{\kappa(n,r)}^n)dw_r\Big|^{\frac{p}{2}}ds \notag
\\
& \leq  K n^{\frac{p}{4}} \int_{0}^{u} E\Big(\int_{\kappa(n,s)}^{s}|\tilde{\sigma}(x_{\kappa(n,r)}^n)|^2dr\Big)^{\frac{p}{4}}ds \notag
\\
& \leq 1 + K n^{\frac{p}{2}} \int_{0}^{u} E\Big(\int_{\kappa(n,s)}^{s}|\tilde{\sigma}(x_{\kappa(n,r)}^n)|^2dr\Big)^{\frac{p}{2}}ds \notag
\\
& \leq  K n \int_{0}^{u} \int_{\kappa(n,s)}^{s}E|\tilde{\sigma}(x_{\kappa(n,r)}^n)|^p drds \notag
\\
& \leq  K  + K\int_{0}^{u} E|x_{\kappa(n,s)}^n|^p ds. \label{eq:C2}
\end{align}
Also, by using Burkholder-Gundy-Davis, Young's  and H\"{o}lder's inequalities,  $C_4$ can be estimated as
\begin{align*}
C_4 & := pE \sup_{0 \leq t \leq u}\Big|\int_{0}^{t} |x_s^n|^{p-2} x_s^n \tilde{\sigma}(x_{\kappa(n,s)}^n)  dw_s\Big|\leq K E \Big(\int_{0}^{u} |x_s^n|^{2p-2} |\tilde{\sigma}(x_{\kappa(n,s)}^n)|^2  ds\Big)^\frac{1}{2}
\\
\leq & K E \sup_{0 \leq t \leq u}|x_t^n|^{p-1}\Big(\int_{0}^{u}  |\tilde{\sigma}(x_{\kappa(n,s)}^n)|^2  ds\Big)^\frac{1}{2} \notag
\\
 \leq & \frac{1}{8} E \sup_{0 \leq t \leq u}|x_t^n|^p+K E\Big(\int_{0}^{u}  |\tilde{\sigma}(x_{\kappa(n,s)}^n)|^2  ds\Big)^\frac{p}{2}
\\
\leq & \frac{1}{8} E \sup_{0 \leq t \leq u}|x_t^n|^p+K \int_{0}^{u}  E|\tilde{\sigma}(x_{\kappa(n,s)}^n)|^p  ds
\end{align*}
which due to Corollary \ref{lem:tildeb:momentbound} gives
\begin{align}
C_4 \leq  \frac{1}{8} E \sup_{0 \leq t \leq u}|x_t^n|^p+ K+K\int_{0}^{u} E|x_{\kappa(n,s)}^n|^p ds. \label{eq:C3}
\end{align}
Further, due to Lemma \ref{lem:BGD:jump}, Young's  and H\"{o}lder's inequalities, one obtains
\begin{align*}
C_5:= & E \sup_{0 \leq t \leq u} \int_{0}^{t} \int_{Z} |x_s^n|^{p-2} x_s^n \tilde  \gamma(x_{\kappa(n,s)}^n,z_2)|   \tilde N(ds,dz_2)
\\
\leq & K E  \int_{0}^{u} \Big(\int_{Z} |x_s^n|^{2p-2} |\tilde  \gamma(x_{\kappa(n,s)}^n,z_2)|^2 \nu(dz_2)  \Big)^\frac{1}{2}ds
\\
\leq & K E \sup_{0 \leq t \leq u}|x_t^n|^{p-1} \int_{0}^{u} \Big(\int_{Z}  |\tilde  \gamma(x_{\kappa(n,s)}^n,z_2)|^2 \nu(dz_2)  \Big)^\frac{1}{2}ds
\\
\leq & \frac{1}{8} E \sup_{0 \leq t \leq u}|x_t^n|^p+ K  \int_{0}^{u} E\int_{Z}  |\tilde  \gamma(x_{\kappa(n,s)}^n,z_2)|^p \nu(dz_2) ds
\end{align*}
which due to Corollary \ref{lem:tilde c:momentbound} gives
\begin{align}
C_5 \leq &\frac{1}{8} E \sup_{0 \leq t \leq u}|x_t^n|^p + K +K\int_{0}^{u}E|x_{\kappa(n,s)}^n|^pds. \label{eq:C4}
\end{align}
Moreover, $C_6$ can be estimated as
\begin{align*}
C_6:=&E\int_{0}^{u} \int_{Z}\big( |x_{s}^n|^{p-2} |\tilde \gamma(x_{\kappa(n,s)}^n,z_2)|^2+ |\tilde \gamma(x_{\kappa(n,s)}^n,z_2)|^p\big){N}(ds,dz_2)
\\
\leq & E\int_{0}^{u} \int_{Z}\big( |x_{s}^n|^{p-2} |\tilde \gamma(x_{\kappa(n,s)}^n,z_2)|^2+ |\tilde \gamma(x_{\kappa(n,s)}^n,z_2)|^p\big)\nu(dz_2) ds
\\
\leq & K E\int_{0}^{u} \int_{Z} |x_{s}^n|^{p-2} |\tilde \gamma(x_{\kappa(n,s)}^n,z_2)|^2 \nu(dz_2) ds
\\
& + K E\int_{0}^{u} \int_{Z}|\tilde \gamma(x_{\kappa(n,s)}^n,z_2)|^p \nu(dz_2) ds
\\
\leq & K E\sup_{0 \leq t \leq u}|x_{t}^n|^{p-2}\int_{0}^{u} \int_{Z}  |\tilde \gamma(x_{\kappa(n,s)}^n,z_2)|^2 \nu(dz_2) ds
\\
& + K E\int_{0}^{u} \int_{Z}|\tilde \gamma(x_{\kappa(n,s)}^n,z_2)|^p \nu(dz_2) ds
\end{align*}
which on the application of Young's inequality, H\"{o}lder's inequality and Corollary \ref{lem:tilde c:momentbound} implies
\begin{align}
C_6 \leq & \frac{1}{8}E\sup_{0 \leq t \leq u}|x_{t}^n|^p + K \int_{0}^{u} E \int_{Z}  |\tilde \gamma(x_{\kappa(n,s)}^n,z_2)|^p \nu(dz_2) ds  \notag
\\
\leq & \frac{1}{8}E\sup_{0 \leq t \leq u}|x_{t}^n|^p+K +K\int_{0}^{u}E|x_{\kappa(n,s)}^n|^pds. \label{eq:C5}
\end{align}
Finally, for estimating $C_7$, one uses the inequality $|x_{s}^n|^{p-2} \leq 2^{p-3}|x_{\kappa(n,s)}^n|^{p-2}+2^{p-3}|x_{s}^n-x_{\kappa(n,s)}^n|^{p-2}$ to obtain,
\begin{align}
C_7:= &K n^{\frac{1}{2}} \int_{0}^{u} E|x_s^n|^{p-2} \Big|\int_{\kappa(n,s)}^{s}\int_{Z}\tilde{\gamma}(x_{\kappa(n,r)}^n,z_2)\tilde N(dr,dz_2)\Big|ds \notag
\\
\leq & K n^{\frac{1}{2}} \int_{0}^{u} E|x_{\kappa(n,s)}^n|^{p-2} \Big|\int_{\kappa(n,s)}^{s}\int_{Z}\tilde{\gamma}(x_{\kappa(n,r)}^n,z_2)\tilde N(dr,dz_2)\Big|ds \notag
\\
&+K n^{\frac{1}{2}} \int_{0}^{u} E|x_s^n-x_{\kappa(n,s)}^n|^{p-2} \Big|\int_{\kappa(n,s)}^{s}\int_{Z}\tilde{\gamma}(x_{\kappa(n,r)}^n,z_2)\tilde N(dr,dz_2)\Big|ds \notag
\\
& =:C_{7a}+C_{7b}. \label{eq:C6a+C6b}
\end{align}
To estimate $C_{7a}$, one uses Lemma \ref{lem:BGD:jump} to write
\begin{align*}
C_{7a}:= & K n^{\frac{1}{2}} \int_{0}^{u} E \Big|\int_{\kappa(n,s)}^{s}\int_{Z}|x_{\kappa(n,s)}^n|^{p-2}\tilde{\gamma}(x_{\kappa(n,r)}^n,z_2)\tilde N(dr,dz_2)\Big|ds
\\
\leq & K n^{\frac{1}{2}} \int_{0}^{u} E \Big(\int_{\kappa(n,s)}^{s}\int_{Z}|x_{\kappa(n,s)}^n|^{2p-4}|\tilde{\gamma}(x_{\kappa(n,r)}^n,z_2)|^2\nu(dz_2)dr\Big)^\frac{1}{2} ds
\end{align*}
which on using Young's and H\"{o}lder's inequalities gives
\begin{align*}
C_{7a} \leq & K E \sup_{0 \leq t \leq u}|x_{t}^n|^{p-2} n^{\frac{1}{2}} \int_{0}^{u}  \Big(\int_{\kappa(n,s)}^{s}\int_{Z}|\tilde{\gamma}(x_{\kappa(n,r)}^n,z_2)|^2\nu(dz_2)dr \Big)^\frac{1}{2} ds
\\
\leq &\frac{1}{8} E \sup_{0 \leq t \leq u}|x_{t}^n|^p + K n^{\frac{p}{4}} E  \int_{0}^{u} \Big(\int_{\kappa(n,s)}^{s}\int_{Z}|\tilde{\gamma}(x_{\kappa(n,r)}^n,z_2)|^2\nu(dz_2)dr ds\Big)^\frac{p}{4}
\\
\leq &\frac{1}{8} E \sup_{0 \leq t \leq u}|x_{t}^n|^p+ 1+ K n^{\frac{p}{2}} E  \int_{0}^{u} \Big(\int_{\kappa(n,s)}^{s}\int_{Z}|\tilde{\gamma}(x_{\kappa(n,r)}^n,z_2)|^2\nu(dz_2)dr ds\Big)^\frac{p}{2}
\\
\leq & \frac{1}{8} E \sup_{0 \leq t \leq u}|x_{t}^n|^p + 1+ K n   \int_{0}^{u} \int_{\kappa(n,s)}^{s}E\int_{Z}|\tilde{\gamma}(x_{\kappa(n,r)}^n,z_2)|^p\nu(dz_2) dr ds
\end{align*}
and then using Corollary \ref{lem:tilde c:momentbound},
\begin{align}
C_{7a} \leq & \frac{1}{8} E \sup_{0 \leq t \leq u}|x_{t}^n|^p +K+K\int_{0}^{u}E|x_{\kappa(n,s)}^n|^p ds.  \label{eq:C6a}
\end{align}
Further, to estimate $C_{7b}$, one observes that due to Young's inequality,
\begin{align}
C_{7b}:= &K n^{\frac{1}{2}} \int_{0}^{u} E|x_s^n-x_{\kappa(n,s)}^n|^{p-2} \Big|\int_{\kappa(n,s)}^{s}\int_{Z}\tilde{\gamma}(x_{\kappa(n,r)}^n,z_2)\tilde N(dr,dz_2)\Big|ds \notag
\\
\leq & K n^{\frac{1}{2}} \int_{0}^{u} E|x_s^n-x_{\kappa(n,s)}^n|^p ds \notag
\\
& + K n^{\frac{1}{2}} \int_{0}^{u} E\Big|\int_{\kappa(n,s)}^{s}\int_{Z}\tilde{\gamma}(x_{\kappa(n,r)}^n,z_2)\tilde N(dr,dz_2)\Big|^\frac{p}{2}ds \notag
\\
\leq & K n^{\frac{1}{2}} \int_{0}^{u} E|x_s^n-x_{\kappa(n,s)}^n|^p ds \notag
\\
& +1+ K n \int_{0}^{u} E\Big|\int_{\kappa(n,s)}^{s}\int_{Z}\tilde{\gamma}(x_{\kappa(n,r)}^n,z_2)\tilde N(dr,dz_2)\Big|^pds \notag
\end{align}
which on using Lemmas [\ref{lem:BGD:jump}, \ref{lem:one-step:norate}] gives
\begin{align}
C_{7b} \leq & K n^{-\frac{1}{2}}  \big(1+  \int_{0}^{u}E|x_{\kappa(n,s)}^n|^p ds \big) \notag
\\
& + 1 + K n \int_{0}^{u} E\Big(\int_{\kappa(n,s)}^{s}\int_{Z} |\tilde{\gamma}(x_{\kappa(n,r)}^n,z_2)|^2 \nu(dz_2) dr\Big)^\frac{p}{2}ds \notag
\\
&+ K n \int_{0}^{u} E\int_{\kappa(n,s)}^{s}\int_{Z} |\tilde{\gamma}(x_{\kappa(n,r)}^n,z_2)|^p \nu(dz_2) drds \notag
\\
\leq & K + K \int_{0}^{u} E|x_{\kappa(n,s)}^n|^p ds \notag
\\
& + K n^{2-\frac{p}{2}} \int_{0}^{u} \int_{\kappa(n,s)}^{s}E\int_{Z} |\tilde{\gamma}(x_{\kappa(n,r)}^n,z_2)|^p \nu(dz_2)  dr ds \notag
\\
& + K n \int_{0}^{u} \int_{\kappa(n,s)}^{s}E\int_{Z} |\tilde{\gamma}(x_{\kappa(n,r)}^n,z_2)|^\frac{p}{2} \nu(dz_2) drds \notag
\end{align}
and finally due to Corollary \ref{lem:tilde c:momentbound}, one obtains
\begin{align}
C_{7b} \leq & K + K  \int_{0}^{u} E|x_{\kappa(n,s)}^n|^p ds. \label{eq:C6b}
\end{align}
Thus, on using estimates from \eqref{eq:C6a} and \eqref{eq:C6b} in \eqref{eq:C6a+C6b}, one has
\begin{align}
C_7 \leq \frac{1}{8} E \sup_{0 \leq t \leq u}|x_{t}^n|^p +K+K\int_{0}^{u}E|x_{\kappa(n,s)}^n|^p ds. \label{eq:C6}
\end{align}
Hence, on substituting estimates from \eqref{eq:C1}, \eqref{eq:C2}, \eqref{eq:C3}, \eqref{eq:C4}, \eqref{eq:C5} and \eqref{eq:C6} in \eqref{eq:C1+...+C7}, one obtains due to \eqref{eq:scheme:mb:finite}
\begin{align}
E \sup_{0 \leq t \leq u}|x_t^n|^p \leq \frac{1}{2}E \sup_{0 \leq t \leq u}|x_t^n|^p + E|\xi|^p+K+ K  \int_{0}^{u} E\sup_{0 \leq r \leq s}|x_r^n|^pds < \infty \notag
\end{align}
which on the application of Gronwall's lemma completes the proof.
\end{proof}
\section{Rate of Convergence} \label{sec:rate}

In what follows, we assume that $6(\chi+2) \leq p$ and $\delta \in (4/(p-2), 1)$.
\begin{lem} \label{lem:one-step:rate}
Let Assumptions A-1 to A-4 hold, then
$$
\sup_{0 \leq t \leq T}E|x_t^n-x_{\kappa(n,t)}^n|^r \leq K n^{-1}
$$
for any $2 \leq r \leq p$.
\end{lem}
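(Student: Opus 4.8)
The plan is to start from the exact one-step increment representation of the scheme,
$x_t^n - x_{\kappa(n,t)}^n = \int_{\kappa(n,t)}^t \tilde b^n(x_{\kappa(n,r)}^n)\,dr + \int_{\kappa(n,t)}^t \tilde\sigma(x_{\kappa(n,r)}^n)\,dw_r + \int_{\kappa(n,t)}^t\int_Z \tilde\gamma(x_{\kappa(n,r)}^n,z_2)\,\tilde N(dr,dz_2)$,
split $E|x_t^n - x_{\kappa(n,t)}^n|^r$ into three pieces via $|a+b+c|^r \le K(|a|^r+|b|^r+|c|^r)$, and bound each separately while exploiting $t-\kappa(n,t)\le n^{-1}$ throughout. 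The essential new ingredient compared with Lemma \ref{lem:one-step:norate} is that the $n$-uniform moment bound $\sup_n E\sup_{0\le t\le T}|x_t^n|^p \le K$ of Lemma \ref{lem:scheme:momentbound}, combined with Corollaries \ref{lem:tildeb:momentbound} and \ref{lem:tilde c:momentbound}, renders every coefficient moment bounded uniformly in $n$ and $s$, so the earlier integral-in-time estimate upgrades to a clean pointwise-in-$t$ bound with constant $K$ free of $n$.

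For the drift term I would use the taming bound $|\tilde b^n(x)|\le n^{1/2}$ from Remark \ref{as:scheme:a^n}, which gives $\big|\int_{\kappa(n,t)}^t \tilde b^n(x_{\kappa(n,r)}^n)\,dr\big|\le (t-\kappa(n,t))\,n^{1/2}\le n^{-1/2}$ deterministically, hence a contribution of order $n^{-r/2}$. For the diffusion term I would apply an elementary inequality for stochastic integrals followed by H\"older's inequality in time on the interval of length $\le n^{-1}$, reducing it to $(t-\kappa(n,t))^{r/2-1}\int_{\kappa(n,t)}^t E|\tilde\sigma(x_{\kappa(n,r)}^n)|^r\,dr$, which is of order $n^{-r/2}$ once $E|\tilde\sigma|^r\le K$ is invoked. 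For the jump term I would use Lemma \ref{lem:BGD:jump}: its $(r/2)$-term is treated exactly like the diffusion term (after passing from the $\mathcal L^2(\nu)$- to the $\mathcal L^r(\nu)$-norm using $\nu(Z)<\infty$ and H\"older), contributing $n^{-r/2}$, while the remaining $\int\!\!\int|\tilde\gamma|^r\nu\,dr$-term contributes $n^{-1}$ directly. Summing the three estimates and using $r\ge 2\Rightarrow n^{-r/2}\le n^{-1}$ yields $E|x_t^n-x_{\kappa(n,t)}^n|^r\le Kn^{-1}$ uniformly in $t$, and taking the supremum over $t\in[0,T]$ closes the argument.

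One bookkeeping point must be settled: Corollaries \ref{lem:tildeb:momentbound} and \ref{lem:tilde c:momentbound} are stated for the exponent $p$, whereas I need $E|\tilde\sigma|^r$ and $E\int_Z|\tilde\gamma|^r\nu(dz_2)$ for arbitrary $r\in[2,p]$. I would dispose of this by interpolation: Lyapunov's inequality gives $E|\tilde\sigma|^r\le (E|\tilde\sigma|^p)^{r/p}$, and for the jump coefficient H\"older on the finite measure $\nu$ together with Jensen gives $E\int_Z|\tilde\gamma|^r\nu(dz_2)\le \nu(Z)^{1-r/p}\big(E\int_Z|\tilde\gamma|^p\nu(dz_2)\big)^{r/p}$, both bounded uniformly in $n$ and $s$ by Corollaries \ref{lem:tildeb:momentbound}, \ref{lem:tilde c:momentbound} and Lemma \ref{lem:scheme:momentbound}. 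The only real obstacle is thus not analytic difficulty but ensuring the constant is genuinely independent of $n$, which hinges entirely on Lemma \ref{lem:scheme:momentbound}; since every exponent of $n^{-1}$ produced is at least $n^{-1}$ when $r\ge 2$, no delicate balancing of powers is needed.
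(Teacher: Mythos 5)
Your proposal is correct and follows essentially the same route as the paper, whose entire proof is the one-line citation of Lemmas \ref{lem:one-step:norate} and \ref{lem:scheme:momentbound}: your argument simply unpacks that citation, rerunning the pointwise-in-$t$ estimate established inside the proof of Lemma \ref{lem:one-step:norate} at exponent $r$ and feeding in the $n$-uniform moment bound to make the constant independent of $n$. Your interpolation remark (Lyapunov's inequality for $E|\tilde\sigma|^r$, and H\"older on the finite measure $\nu$ combined with Jensen for $E\int_Z|\tilde\gamma|^r\,\nu(dz_2)$) correctly settles the $r<p$ bookkeeping that the paper leaves implicit.
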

\begin{proof}
The proof follows due to Lemmas [\ref{lem:one-step:norate}, \ref{lem:scheme:momentbound}].
\end{proof}
\begin{lem} \label{lem:one-step:sigma:rate}
Let Assumptions A-1 to
A-5  hold, then
$$
\sup_{0 \leq t \leq T} E|\sigma(x_{\kappa(n,t)}^n)|^2|x_t^n-x_{\kappa(n,t)}^n|^2 \leq K n^{-1}.
$$
\end{lem}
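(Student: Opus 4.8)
The plan is to condition on the $\sigma$-algebra $\mathscr{F}_{\kappa(n,t)}$, exploiting that $\sigma(x_{\kappa(n,t)}^n)$ is $\mathscr{F}_{\kappa(n,t)}$-measurable, and then to use the It\^o and compensated-Poisson isometries to pass from the second moment of the one-step increment to the \emph{first} moment of its predictable quadratic variation. Writing $\kappa:=\kappa(n,t)$ and noting that $\kappa(n,s)=\kappa$ for $s\in[\kappa,t]$, one has
\begin{align*}
x_t^n-x_\kappa^n &= (t-\kappa)\tilde{b}^n(x_\kappa^n)+\int_\kappa^t \tilde{\sigma}(x_{\kappa(n,s)}^n)\,dw_s
\\
&\quad +\int_\kappa^t\int_Z \tilde{\gamma}(x_{\kappa(n,s)}^n,z_2)\,\tilde{N}(ds,dz_2)=:D+M_\sigma+M_\gamma.
\end{align*}
I would then use $|\sigma(x_\kappa^n)|^2|x_t^n-x_\kappa^n|^2\le 3|\sigma(x_\kappa^n)|^2(|D|^2+|M_\sigma|^2+|M_\gamma|^2)$ and estimate the three resulting expectations separately.

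For the drift term, Remark \ref{as:scheme:a^n} yields the pathwise bound $|D|\le (t-\kappa)\,|\tilde{b}^n(x_\kappa^n)|\le n^{-1}n^{1/2}=n^{-1/2}$, so $|D|^2\le n^{-1}$ deterministically, and Remark \ref{as:sde:growth} together with Lemma \ref{lem:scheme:momentbound} gives $E[|\sigma(x_\kappa^n)|^2|D|^2]\le n^{-1}E|\sigma(x_\kappa^n)|^2\le Kn^{-1}$. For the diffusion term, since $|\sigma(x_\kappa^n)|^2$ is $\mathscr{F}_\kappa$-measurable, the tower property combined with the It\^o isometry gives
\begin{align*}
E\big[|\sigma(x_\kappa^n)|^2|M_\sigma|^2\big]=E\Big[|\sigma(x_\kappa^n)|^2\int_\kappa^t|\tilde{\sigma}(x_{\kappa(n,s)}^n)|^2\,ds\Big],
\end{align*}
and a Cauchy--Schwarz inequality on $\Omega$ (after Fubini) bounds this by $\int_\kappa^t (E|\sigma(x_\kappa^n)|^4)^{1/2}(E|\tilde{\sigma}(x_{\kappa(n,s)}^n)|^4)^{1/2}\,ds\le K(t-\kappa)\le Kn^{-1}$, the two fourth-moment factors being finite by Remark \ref{as:sde:growth}, the exponent-$4$ version of Corollary \ref{lem:tildeb:momentbound} (valid since $p\ge 4$) and Lemma \ref{lem:scheme:momentbound}.

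The jump term is where the main difficulty lies, and it is precisely what forces the conditioning strategy rather than a direct moment inequality. A naive Cauchy--Schwarz applied to $|\sigma(x_\kappa^n)|^2|x_t^n-x_\kappa^n|^2$ only yields the rate $n^{-1/2}$: by Lemma \ref{lem:BGD:jump} the contribution $E\int_\kappa^t\int_Z|\tilde{\gamma}|^4\nu(dz_2)\,ds$ makes $E|M_\gamma|^4$ of order $n^{-1}$ rather than $n^{-2}$, so $(E|M_\gamma|^4)^{1/2}$ is only $O(n^{-1/2})$. Conditioning circumvents this, because by the tower property and the isometry for compensated Poisson integrals
\begin{align*}
E\big[|\sigma(x_\kappa^n)|^2|M_\gamma|^2\big]=E\Big[|\sigma(x_\kappa^n)|^2\int_\kappa^t\int_Z|\tilde{\gamma}(x_{\kappa(n,s)}^n,z_2)|^2\,\nu(dz_2)\,ds\Big],
\end{align*}
which replaces $|M_\gamma|^2$ by the first moment of its predictable quadratic variation, genuinely of order $n^{-1}$. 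Applying Cauchy--Schwarz on $\Omega$ together with the elementary estimate $\big(\int_Z|\tilde{\gamma}|^2\nu(dz_2)\big)^2\le \nu(Z)\int_Z|\tilde{\gamma}|^4\nu(dz_2)$ (valid since $\nu(Z)<\infty$), I would control this by $\int_\kappa^t (E|\sigma(x_\kappa^n)|^4)^{1/2}\big(\nu(Z)\,E\int_Z|\tilde{\gamma}|^4\nu(dz_2)\big)^{1/2}\,ds\le Kn^{-1}$ via the exponent-$4$ version of Corollary \ref{lem:tilde c:momentbound} and Lemma \ref{lem:scheme:momentbound}. Collecting the three estimates and taking the supremum over $t\in[0,T]$ completes the argument; the one genuinely delicate point is recognising that the jump term cannot be handled by a crude moment inequality and must instead be reduced, through conditioning, to the first moment of the quadratic variation.
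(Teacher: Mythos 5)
Your proof is correct and is essentially the paper's own argument: the paper bounds $|\sigma(x_{\kappa(n,t)}^n)|^2$ by $K(1+|x_{\kappa(n,t)}^n|^2)$ via Remark \ref{as:sde:growth} and pulls this $\mathscr{F}_{\kappa(n,t)}$-measurable weight \emph{inside} the stochastic integrals before applying the It\^o and compensated-Poisson $L^2$-isometries, which is exactly your tower-property/conditional-isometry step in different notation, and it then finishes with the same H\"older (Cauchy--Schwarz) argument in $\omega$ using fourth moments from Lemma \ref{lem:scheme:momentbound} and Corollaries [\ref{lem:tildeb:momentbound}, \ref{lem:tilde c:momentbound}]. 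Your diagnosis that a naive Cauchy--Schwarz through Lemma \ref{lem:one-step:rate} only gives $n^{-1/2}$ because the jump term makes $E|x_t^n-x_{\kappa(n,t)}^n|^4$ of order $n^{-1}$ correctly identifies why the weighted-isometry route is necessary, and your drift estimate (pathwise $|\tilde{b}^n|\leq n^{1/2}$ versus the paper's H\"older plus Remark \ref{as:sde:poly}) is an immaterial variation.
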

\begin{proof}
By Remark \ref{as:sde:growth} and H\"older's inequality,
\begin{align*}
E|&\sigma(x_{\kappa(n,t)}^n)|^2|x_t^n-x_{\kappa(n,t)}^n|^2  \leq K E\Big|\int^{t}_{\kappa(n,t)} (1+|x_{\kappa(n,t)}^n|^2) \tilde{b}^n(x_{\kappa(n,r)}^n)dr\Big|^2
\\
&  +K E\Big|\int^{t}_{\kappa(n,t)} (1+|x_{\kappa(n,t)}^n|^2) \tilde{\sigma}(x_{\kappa(n,r)}^n)dw_r\Big|^2
\\
& + K E\Big|\int^{t}_{\kappa(n,t)}\int_Z (1+|x_{\kappa(n,t)}^n|^2) \tilde{\gamma}(x_{\kappa(n,r)}^n, z_2)\tilde N(dr,dz_2)\Big|^2
\\
& \leq K n^{-1} E\int^{t}_{\kappa(n,t)} (1+|x_{\kappa(n,t)}^n|^2)^2 |\tilde{b}^n(x_{\kappa(n,r)}^n)|^2dr
\\
& +K E\int^{t}_{\kappa(n,t)} (1+|x_{\kappa(n,t)}^n|^2)^2 |\tilde{\sigma}(x_{\kappa(n,r)}^n)|^2 dr
\\
& \quad  + K E\int^{t}_{\kappa(n,t)}\int_Z (1+|x_{\kappa(n,t)}^n|^2)^2 |\tilde{\gamma}(x_{\kappa(n,r)}^n, z_2)|^2 \nu(dz_2)dr.
\end{align*}
Then, on further application of H\"older's inequality along with Remark \ref{as:sde:poly} and Corollaries [\ref{lem:tildeb:momentbound}, \ref{lem:tilde c:momentbound}], one completes the proof.
 \end{proof}
\begin{lem} \label{lem:one-step:gamma:rate}
Let Assumptions A-1 to 
A-5  hold, then
$$
\sup_{0 \leq t \leq T}E\int_Z|\gamma(x_{\kappa(n,t)}^n,z_2)|^2|x_t^n-x_{\kappa(n,t)}^n|^2 \nu(dz_2)\leq K n^{-1}.
$$
\end{lem}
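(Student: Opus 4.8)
The plan is to reduce the claim to the diffusion estimate of Lemma \ref{lem:one-step:sigma:rate}. The crucial simplification is that the factor $|x_t^n-x_{\kappa(n,t)}^n|^2$ does not involve the integration variable $z_2$ and may therefore be taken outside the inner integral, so that
$$
E\int_Z|\gamma(x_{\kappa(n,t)}^n,z_2)|^2|x_t^n-x_{\kappa(n,t)}^n|^2 \nu(dz_2)=E\Big[|x_t^n-x_{\kappa(n,t)}^n|^2\int_Z|\gamma(x_{\kappa(n,t)}^n,z_2)|^2 \nu(dz_2)\Big].
$$
First I would apply the growth estimate from Remark \ref{as:sde:growth}, namely $\int_Z|\gamma(x,z)|^2\nu(dz)\leq L(1+|x|^2)$, to bound the inner integral, obtaining
$$
E\int_Z|\gamma(x_{\kappa(n,t)}^n,z_2)|^2|x_t^n-x_{\kappa(n,t)}^n|^2 \nu(dz_2)\leq L\,E\big[(1+|x_{\kappa(n,t)}^n|^2)|x_t^n-x_{\kappa(n,t)}^n|^2\big].
$$

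The right-hand side is exactly the expression handled in the proof of Lemma \ref{lem:one-step:sigma:rate}, where the same Remark \ref{as:sde:growth} (now through $|\sigma(x)|^2\leq L(1+|x|^2)$) reduces the diffusion statement to $E[(1+|x_{\kappa(n,t)}^n|^2)|x_t^n-x_{\kappa(n,t)}^n|^2]$. Hence the remainder of the argument is verbatim: one expands the increment $x_t^n-x_{\kappa(n,t)}^n$ through the scheme \eqref{eq:milstein} into its drift, diffusion and jump parts on $[\kappa(n,t),t]$, pulls the (deterministic-in-$r$) factor $1+|x_{\kappa(n,t)}^n|^2$ inside each integral, splits the resulting product by H\"older's inequality, and controls the jump contribution with an elementary stochastic inequality together with Lemma \ref{lem:BGD:jump}. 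The polynomially growing drift $\tilde b^n$ is treated via Remark \ref{as:sde:poly}, the tamed diffusion and jump coefficients via Corollaries [\ref{lem:tildeb:momentbound}, \ref{lem:tilde c:momentbound}], and the uniform-in-$n$ moment bound of Lemma \ref{lem:scheme:momentbound} supplies finite constants; since every contribution carries a factor $|t-\kappa(n,t)|\leq n^{-1}$, the rate follows after taking the supremum over $t\in[0,T]$.

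The only delicate point — and the step I expect to be the main, albeit minor, obstacle — is the drift contribution, since $\tilde b^n$ inherits the super-linear growth $|b(x)|\leq L(1+|x|^{\chi+2})$ of Remark \ref{as:sde:poly}. After inserting the increment, this term involves the product $(1+|x_{\kappa(n,t)}^n|^2)^2|\tilde b^n(x_{\kappa(n,t)}^n)|^2$, and hence moments of $x_{\kappa(n,t)}^n$ of order roughly $2\chi+8$. The standing hypothesis $p\geq 6(\chi+2)$ in force throughout Section \ref{sec:rate} comfortably guarantees, through Lemma \ref{lem:scheme:momentbound}, that these moments are finite uniformly in $n$; a single further application of H\"older's inequality separates the polynomial factor from the coefficient norms so that the cited corollaries apply. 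As this bookkeeping coincides with that of Lemma \ref{lem:one-step:sigma:rate}, no genuinely new estimate is required.
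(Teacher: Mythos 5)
Your proposal is correct and takes essentially the same route as the paper, whose entire proof of this lemma is the remark that it ``follows the same arguments as used in Lemma \ref{lem:one-step:sigma:rate}'': pulling the $z_2$-independent factor $|x_t^n-x_{\kappa(n,t)}^n|^2$ out of the $\nu(dz_2)$-integral and invoking $\int_Z|\gamma(x,z_2)|^2\nu(dz_2)\leq L(1+|x|^2)$ from Remark \ref{as:sde:growth} is precisely the reduction that makes the diffusion argument apply verbatim. Your accounting of the drift term's moment requirement (order roughly $2\chi+8$, covered by the standing hypothesis $p\geq 6(\chi+2)$ via Lemma \ref{lem:scheme:momentbound}) is likewise consistent with the paper's treatment.
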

\begin{proof}
The proof follows the same arguments as used in Lemma \ref{lem:one-step:sigma:rate}.
\end{proof}
\begin{lem} \label{lem:form:rate}
Consider equation \eqref{eq:an:tilde} and let Assumptions A-1 to 
 A-5 be satisfied, then
$$
\sup_{0\leq t \leq T} E |b(x_t^n)-\tilde{b}^n(x_t^n)|^2  \leq K n^{-2}.
$$
\end{lem}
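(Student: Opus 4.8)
The plan is to exploit the explicit algebraic form of the taming rather than any analytic estimate. First I would rewrite the difference as a single fraction, using $\theta=1$ in \eqref{eq:an:tilde}:
\begin{align*}
b(x)-\tilde{b}^n(x) = b(x)-\frac{b(x)}{1+n^{-1}|b(x)|^2} = \frac{n^{-1}|b(x)|^2\,b(x)}{1+n^{-1}|b(x)|^2}.
\end{align*}
Taking Euclidean norms and bounding the denominator below by $1$ gives the pointwise estimate $|b(x)-\tilde{b}^n(x)| \leq n^{-1}|b(x)|^3$, and hence
\begin{align*}
|b(x)-\tilde{b}^n(x)|^2 \leq n^{-2}|b(x)|^6
\end{align*}
for every $x \in \mathbb{R}^d$.

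Next I would convert this into a moment bound using the super-linear growth of $b$. By Remark \ref{as:sde:poly} one has $|b(x)| \leq L(1+|x|^{\chi+2})$, so that $|b(x)|^6 \leq K(1+|x|^{6(\chi+2)})$. Substituting $x=x_t^n$ and taking expectations yields, uniformly in $t \in [0,T]$,
\begin{align*}
E|b(x_t^n)-\tilde{b}^n(x_t^n)|^2 \leq K n^{-2}\big(1+E|x_t^n|^{6(\chi+2)}\big).
\end{align*}

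The final step is to absorb the moment $E|x_t^n|^{6(\chi+2)}$ into the constant. Since the standing assumption is $p \geq 6(\chi+2)$, the monotonicity of $\mathcal{L}^r$-norms together with the uniform-in-$n$ moment bound of Lemma \ref{lem:scheme:momentbound}, namely $\sup_{n\in\mathbb{N}}E\sup_{0\leq t\leq T}|x_t^n|^p \leq K$, gives $\sup_{n\in\mathbb{N}}\sup_{0\leq t\leq T}E|x_t^n|^{6(\chi+2)} \leq K$. Combining this with the previous display produces $\sup_{0\leq t\leq T}E|b(x_t^n)-\tilde{b}^n(x_t^n)|^2 \leq K n^{-2}$, as claimed.

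I do not expect a genuine analytic obstacle here; the estimate rests entirely on the elementary observation that the taming factor contributes exactly one power of $n^{-1}$ for each factor $|b(x)|^2$ appearing in the numerator, so that the cost of taming is paid in a single higher moment of $b$. The only point demanding care is the bookkeeping of exponents: one must check that the sixth power arising from squaring the cubic bound, applied to a coefficient of growth order $\chi+2$, produces the exponent $6(\chi+2)$, and that this does not exceed the available moment order $p$. This is precisely what the hypothesis $p \geq 6(\chi+2)$ guarantees, and it is the reason that this particular moment threshold is imposed throughout Section \ref{sec:rate}.
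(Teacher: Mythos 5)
Your proof is correct and is essentially the paper's own argument: the paper's proof is a one-line citation of exactly the ingredients you use, namely the explicit form \eqref{eq:an:tilde} of the taming, the polynomial growth $|b(x)|\leq L(1+|x|^{\chi+2})$ of Remark \ref{as:sde:poly}, and the uniform moment bound of Lemma \ref{lem:scheme:momentbound} with $p\geq 6(\chi+2)$. Your explicit computation $b(x)-\tilde b^n(x)=n^{-1}|b(x)|^2 b(x)/(1+n^{-1}|b(x)|^2)$ and the resulting exponent bookkeeping simply spell out what the paper leaves implicit.
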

\begin{proof}
The proof immediately follows due to equation \eqref{eq:an:tilde}, Remarks [\ref{as:scheme:a^n}, \ref{as:sde:poly}]  and Lemma \ref{lem:scheme:momentbound}.
\end{proof}
\begin{lem} \label{c-tilde c:rate}
Let Assumptions A-1 to 
A-5 hold, then
\begin{align*}
\sup_{0 \leq t \leq T}E\int_Z|\gamma(x_t^n, z_2) - \tilde \gamma(x_{\kappa(n,t)}^n, z_2)|^2 \nu(dz_2) \leq K n^{-2}.
\end{align*}
\end{lem}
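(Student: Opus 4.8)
The plan is to write the difference as
\begin{align*}
\gamma(x_t^n,z_2)-\tilde\gamma(x_{\kappa(n,t)}^n,z_2)&=\big[\gamma(x_t^n,z_2)-\gamma(x_{\kappa(n,t)}^n,z_2)\big]\\
&\quad-\gamma_1(x_{\kappa(n,t)}^n,z_2)-\gamma_2(x_{\kappa(n,t)}^n,z_2)-\gamma_3(x_{\kappa(n,t)}^n,z_2)
\end{align*}
and to expand the first bracket by It\^o's formula for jump-diffusions applied to $s\mapsto\gamma^i(x_s^n,z_2)$ on $[\kappa(n,t),t]$, using the scheme dynamics \eqref{eq:milstein}. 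This produces a Lebesgue (drift) term carrying $\frac{\partial\gamma^i}{\partial x^u}\tilde b^{n,u}$ together with the second-order It\^o corrections in $\frac{\partial^2\gamma^i}{\partial x^u\partial x^v}$ and the jump-compensator correction; a Brownian martingale term with integrand $\sum_{u,j}\frac{\partial\gamma^i}{\partial x^u}(x_r^n,z_2)\tilde\sigma^{(u,j)}(x_{\kappa(n,r)}^n)$; and the exact compensated jump term with integrand $\gamma^i(x_{r-}^n+\tilde\gamma(x_{\kappa(n,r)}^n,z_1),z_2)-\gamma^i(x_{r-}^n,z_2)$. Comparing with \eqref{eq:cn:tilde}, the term $\gamma_1$ cancels the leading part of the Brownian term, while $\gamma_2+\gamma_3$ (after splitting the $N$-integral defining $\gamma_3$ into its $\tilde N$ and $\nu\,dr$ parts) reproduces exactly the jump term with $x_{r-}^n$ frozen at $x_{\kappa(n,r)}^n$ and $\tilde\gamma$ replaced by $\gamma$. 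What remains is therefore a sum of genuine residuals, each of which I would bound in the norm $E\int_Z|\cdot|^2\nu(dz_2)$ by $Kn^{-2}$.

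For the Brownian residual I would apply It\^o's isometry with Fubini to reduce to $\int_Z\int_{\kappa(n,t)}^t E|\cdot|^2\,dr\,\nu(dz_2)$ and split its integrand into $\big[\frac{\partial\gamma^i}{\partial x^u}(x_r^n,z_2)-\frac{\partial\gamma^i}{\partial x^u}(x_{\kappa(n,r)}^n,z_2)\big]\sigma^{(u,j)}(x_{\kappa(n,r)}^n)$ and $\frac{\partial\gamma^i}{\partial x^u}(x_{\kappa(n,r)}^n,z_2)(\sigma_1+\sigma_2+\sigma_3)^{(u,j)}(x_{\kappa(n,r)}^n)$, the cross term being handled likewise. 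In the first piece Assumption A-3 bounds the $L^2(\nu)$-size of the bracket by $L|x_r^n-x_{\kappa(n,r)}^n|^2$, so that after pulling out $|\sigma^{(u,j)}|^2$ the expectation is controlled by Lemma \ref{lem:one-step:sigma:rate}; in the second piece the $L^2(\nu)$-size of $\frac{\partial\gamma^i}{\partial x^u}$ is at most $K(1+|x_{\kappa(n,r)}^n|^2)$ (again by Assumption A-3) and $E(1+|x_{\kappa(n,r)}^n|^2)|\sigma_1+\sigma_2+\sigma_3|^2\le Kn^{-1}$ follows from Lemmas [\ref{lem:b1:rate},\ref{lem:b2:rate},\ref{lem:b3:rate}] and Lemma \ref{lem:scheme:momentbound}. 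Since the interval has length at most $n^{-1}$, each piece gains a further factor $n^{-1}$, so the Brownian residual is $O(n^{-2})$. The drift residual is estimated by a Cauchy--Schwarz in time, which supplies one factor $n^{-1}$, together with the key fact that $|\tilde b^n(x)|\le|b(x)|\le L(1+|x|^{\chi+2})$ by Remarks [\ref{as:scheme:a^n},\ref{as:sde:poly}]; since $p\ge6(\chi+2)$, Lemma \ref{lem:scheme:momentbound} makes $\tilde b^n(x_{\kappa(n,r)}^n)$ bounded in $L^2$ uniformly in $n$, and it is essential to use this super-linear bound rather than the trivial $|\tilde b^n|\le n^{1/2}$, which would only yield $n^{-1}$. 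The remaining length-$n^{-1}$ second-order corrections are handled with Remark \ref{rem:der:bounded}, Assumption A-3 and Corollaries [\ref{lem:tildeb:momentbound},\ref{lem:tilde c:momentbound}], all being $O(n^{-2})$.

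For the jump residuals I would treat the compensated ($\tilde N$) part by Lemma \ref{lem:BGD:jump} with $r=2$ and the compensator ($\nu\,dr$) part by H\"older's inequality in time, in both cases reducing (after integrating in $z_2$) to $\int_{\kappa(n,t)}^t\int_Z E|\cdot|^2\,\nu(dz_1)\,dr$. The surviving increment $\big[\gamma^i(x_{r-}^n+\tilde\gamma(x_{\kappa(n,r)}^n,z_1),z_2)-\gamma^i(x_{r-}^n,z_2)\big]-\big[\gamma^i(x_{\kappa(n,r)}^n+\gamma(x_{\kappa(n,r)}^n,z_1),z_2)-\gamma^i(x_{\kappa(n,r)}^n,z_2)\big]$ is linearised by the Lipschitz property (Assumption A-2), which after integration in $z_2$ bounds its square by $K\big(|x_r^n-x_{\kappa(n,r)}^n|^2+|\tilde\gamma(x_{\kappa(n,r)}^n,z_1)-\gamma(x_{\kappa(n,r)}^n,z_1)|^2\big)$; on taking expectation, the base-point shift is controlled by Lemma \ref{lem:one-step:rate} (and by Lemma \ref{lem:one-step:gamma:rate} for the weighted contributions coming from the compensator), while the jump-size shift $\tilde\gamma-\gamma=\gamma_1+\gamma_2+\gamma_3$ is controlled by Lemmas [\ref{lem:c1:rate},\ref{lem:c2:rate},\ref{lem:c3:rate}], each giving $n^{-1}$, which combines with the factor $n^{-1}$ from the one-step interval to yield $n^{-2}$. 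Collecting the residual bounds and taking the supremum over $t\in[0,T]$ finishes the proof. The main obstacle is the bookkeeping of the jump term: organising the exact It\^o jump expansion so that $\gamma_2+\gamma_3$ cancels precisely the frozen-$\gamma$ version, and then controlling the resulting double $(z_1,z_2)$-integrals, while at the same time estimating $\tilde b^n$ through its super-linear bound rather than its trivial $n^{1/2}$ bound.
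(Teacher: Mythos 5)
Your proposal follows essentially the same route as the paper's proof: It\^o's formula applied to $s\mapsto\gamma^i(x_s^n,z_2)$ on $[\kappa(n,t),t]$ (the paper's equation \eqref{eq:c:ito}), subtraction of the definition \eqref{eq:cn:tilde} so that $\gamma_1,\gamma_2,\gamma_3$ cancel the leading Brownian and jump terms, and then the same battery of estimates — It\^o isometry/Lemma \ref{lem:BGD:jump} with Fubini, Assumption A-3 on derivative differences combined with Lemmas [\ref{lem:one-step:sigma:rate}, \ref{lem:one-step:gamma:rate}], the rates of Lemmas [\ref{lem:b1:rate}--\ref{lem:c3:rate}], and crucially the bound $|\tilde b^n|\le|b|\le L(1+|x|^{\chi+2})$ with Lemma \ref{lem:scheme:momentbound} in place of the trivial $|\tilde b^n|\le n^{1/2}$, which you correctly identify as the step that turns $n^{-1}$ into $n^{-2}$ in the drift residual.

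One step in your Brownian residual needs repair, though. You bound $\int_Z|\partial\gamma^i(\cdot,z_2)/\partial x^u|^2\nu(dz_2)$ by $K(1+|x|^2)$ and then claim $E\,(1+|x_{\kappa(n,r)}^n|^2)\,|\sigma_i(x_{\kappa(n,r)}^n)|^2\le Kn^{-1}$ "from Lemmas [\ref{lem:b1:rate},\ref{lem:b2:rate},\ref{lem:b3:rate}] and Lemma \ref{lem:scheme:momentbound}". As stated this does not follow: those lemmas are unconditional moment bounds, and H\"older with exponents $(q,q')$ gives $(E(1+|x|^2)^{q})^{1/q}(E|\sigma_i|^{2q'})^{1/q'}\le Kn^{-1/q'}$ for $\sigma_2,\sigma_3$ (whose $\mathcal{L}^p$ rates are $n^{-1}$ for \emph{every} $p$), so your route only yields $Kn^{-2+\epsilon}$ rather than the claimed $Kn^{-2}$. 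The fix is immediate and is what the paper implicitly does: Assumption A-2 gives, by a difference-quotient and Fatou argument, the \emph{uniform} bound $\int_Z|\partial\gamma^i(x,z_2)/\partial x^u|^2\nu(dz_2)\le L$ for all $x$, so no weight $(1+|x|^2)$ ever appears and $E|\sigma_i|^2\le Kn^{-1}$ (Lemmas [\ref{lem:b1:rate}--\ref{lem:b3:rate}] with $p=2$) times the interval length $n^{-1}$ gives exactly $n^{-2}$; alternatively, conditioning on $\mathscr{F}_{\kappa(n,s)}$, since $x_{\kappa(n,s)}^n$ is $\mathscr{F}_{\kappa(n,s)}$-measurable and $\sigma_i$ is a stochastic integral over $[\kappa(n,s),s]$, also rescues your weighted version. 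With that adjustment your argument is complete and coincides with the paper's.
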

\begin{proof}
By It\^o's formula,
$$
\gamma^i(x_t^n,  z_2) = \gamma^i(x_{\kappa(n,t)}^n, z_2) + \sum_{u=1}^d \int_{\kappa(n,t)}^t \frac{\partial \gamma^i(x_r^n, z_2)}{\partial x^u} \tilde{b}^{n,u}(x_{\kappa(n,r)}^n)dr \notag
$$
\begin{align}
&+\frac{1}{2}\sum_{u_1,u_2=1}^{d} \sum_{l_1=1}^{m}\int_{\kappa(n,t)}^t \frac{\partial^2 \gamma^i(x_r^n, z_2)}{\partial x^{u_1} \partial x^{u_2}}  \tilde{\sigma}^{(u_1,l_1)}(x_{\kappa(n,r)}^n)\tilde{\sigma}^{(u_2,l_1)}(x_{\kappa(n,r)}^n)dr \notag
\\
&+\sum_{u=1}^d \sum_{j=1}^m \int_{\kappa(n,t)}^t \frac{\partial \gamma^i(x_r^n, z_2)}{\partial x^u}  \tilde{\sigma}^{(u,j)}(x_{\kappa(n,r)}^n)dw_r^j \notag
\\
& +\sum_{u=1}^d \int_{\kappa(n,t)}^t \int_Z \frac{\partial \gamma^i(x_r^n, z_2)}{\partial x^u}  \tilde{\gamma}^{u}(x_{\kappa(n,r)}^n,z_1)\tilde N(dr, dz_1) \notag
\\
&+ \int_{\kappa(n,t)}^t \int_Z \Big(\gamma^i(x_r^n+\tilde{\gamma}(x_{\kappa(n,r)}^n,z_1), z_2)- \gamma^i(x_r^n, z_2) \notag
\\
& \qquad-\sum_{u=1}^{d}\frac{\partial \gamma^i(x_r^n, z_2)}{\partial x^{u}} {\tilde{\gamma}}^{u}(x_{\kappa(n,r)}^n,z_1)\Big)N(dr, dz_1). \label{eq:c:ito}
\end{align}
Then, on taking expectation of square of difference between \eqref{eq:c:ito} and \eqref{eq:cn:tilde}, One obtains
\begin{align*}
& E\int_Z|\gamma^i(x_t^n, z_2)-\tilde{\gamma}^i(x_{\kappa(n,t)}^n, z_2)|^2\nu(dz_2)
\\
& \leq K \int_Z E\Big|\sum_{u=1}^d \int_{\kappa(n,t)}^t \frac{\partial \gamma^i(x_r^n, z_2)}{\partial x^u} \tilde{b}^{n,u}(x_{\kappa(n,r)}^n)dr\Big|^2\nu(dz_2) \notag
\\
&+K \int_Z E\Big|\sum_{u_1,u_2=1}^{d} \sum_{l_1=1}^{m}\int_{\kappa(n,t)}^t \frac{\partial^2 \gamma^i(x_r^n, z_2)}{\partial x^{u_1} \partial x^{u_2}}  \tilde{\sigma}^{(u_1,l_1)}(x_{\kappa(n,r)}^n)\tilde{\sigma}^{(u_2,l_1)}(x_{\kappa(n,r)}^n)dr \Big|^2 \nu(dz_2) \notag
\\
&+K \int_Z E\Big|\sum_{u=1}^d \sum_{j=1}^m \int_{\kappa(n,t)}^t \Big(\frac{\partial \gamma^i(x_r^n, z_2)}{\partial x^u} - \frac{\partial \gamma^i(x_{\kappa(n,r)}^n, z_2)}{\partial x^u}\Big) \sigma^{(u,j)}(x_{\kappa(n,r)}^n)dw_r^j \Big|^2 \nu(dz_2)
\\
& + K \int_Z E\Big|\sum_{u=1}^d \sum_{j=1}^m \int_{\kappa(n,t)}^t \frac{\partial \gamma^i(x_r^n, z_2)}{\partial x^u} \sigma_1^{(u,j)}(x_{\kappa(n,r)}^n)dw_r^j \Big|^2 \nu(dz_2) \notag
\\
&+K \int_Z E\Big|\sum_{u=1}^d \sum_{j=1}^m \int_{\kappa(n,t)}^t \frac{\partial \gamma^i(x_r^n, z_2)}{\partial x^u} \sigma_2^{(u,j)}(x_{\kappa(n,r)}^n)dw_r^j\Big|^2 \nu(dz_2)  \notag
\\
& + K \int_Z E\Big|\sum_{u=1}^d \sum_{j=1}^m \int_{\kappa(n,t)}^t \frac{\partial \gamma^i(x_r^n, z_2)}{\partial x^u} \sigma_3^{(u,j)}(x_{\kappa(n,r)}^n)dw_r^j \Big|^2 \nu(dz_2) \notag
\\
&+K \int_Z E\Big|\sum_{u=1}^d \int_{\kappa(n,t)}^t \int_Z \Big[\frac{\partial \gamma^i(x_r^n, z_2)}{\partial x^u} - \frac{\partial \gamma^i(x_{\kappa(n,r)}^n, z_2)}{\partial x^u}\Big]\gamma^{u}(x_{\kappa(n,r)}^n,z_1)\tilde N(dr, dz_1)  \Big|^2 \nu(dz_2) \notag
\\
& + K \int_Z E\Big| \sum_{u=1}^d \int_{\kappa(n,t)}^t \int_Z \frac{\partial \gamma^i(x_r^n, z_2)}{\partial x^u} \gamma_1^{u}(x_{\kappa(n,r)}^n,z_1)\tilde N(dr, dz_1) \Big|^2 \nu(dz_2) \notag
\\
& + K \int_Z E\Big|\sum_{u=1}^d \int_{\kappa(n,t)}^t \int_Z \frac{\partial \gamma^i(x_r^n, z_2)}{\partial x^u} \gamma_2^{u}(x_{\kappa(n,r)}^n,z_1)\tilde N(dr, dz_1) \Big|^2 \nu(dz_2) \notag
\\
&+K \int_Z E\Big|\sum_{u=1}^d \int_{\kappa(n,t)}^t \int_Z \frac{\partial \gamma^i(x_r^n, z_2)}{\partial x^u} \gamma_3^{u}(x_{\kappa(n,r)}^n,z_1)\tilde N(dr, dz_1)\Big|^2 \nu(dz_2) \notag
\end{align*}
\begin{align*}
& +K \int_Z E\Big[\int_{\kappa(n,t)}^t \int_Z \Big(|\gamma^i(x_r^n+\tilde{\gamma}(x_{\kappa(n,r)}^n,z_1), z_2)-\gamma^{i} (x_{\kappa(n,r)}^n+\gamma(x_{\kappa(n,r)}^n, z_1), z_2)|
\\
&+|\gamma^i(x_r^n, z_2)-\gamma^{i}(x_{\kappa(n,r)}^n, z_2)|+\sum_{u=1}^d\Big| \frac{\partial \gamma^{i}(x_r^n,z_2)}{\partial x^u}- \frac{\partial \gamma^{i}(x_{\kappa(n,r)}^n,z_2)}{\partial x^u}\Big| |\gamma^u(x_{\kappa(n,r)}^n,z_1)|
\\
&+\sum_{u=1}^{d}\Big|\frac{\partial \gamma^i(x_r^n, z_2)}{\partial x^{u}}\Big||\gamma_1^u(x_{\kappa(n,r)}^n,z_1)|+\sum_{u=1}^{d}\Big|\frac{\partial \gamma^i(x_r^n, z_2)}{\partial x^{u}}\Big||\gamma_2^u(x_{\kappa(n,r)}^n,z_1)|
\\
&+\sum_{u=1}^{d}\Big|\frac{\partial \gamma^i(x_r^n, z_2)}{\partial x^{u}}\Big||\gamma_3^u(x_{\kappa(n,r)}^n,z_1)|\Big) N(dr, dz_1)\Big]^2 \nu(dz_2)
\end{align*}
which due to H\"{o}lder's inequality, Fubini Theorem and Lemma \ref{lem:BGD:jump} gives,
\begin{align*}
&E\int_Z|\gamma^i(x_t^n, z_2)-\tilde{\gamma}^i(x_{\kappa(n,t)}^n, z_2)|^2 \nu(dz_2)
\\
& \leq Kn^{-1} E\sum_{u=1}^d \int_{\kappa(n,t)}^t \Big(\int_Z \Big|\frac{\partial \gamma^i(x_r^n, z_2)}{\partial x^u}\Big|^2\nu(dz_2)\Big) |\tilde{b}^{n,u}(x_{\kappa(n,r)}^n)|^2 dr \notag
\\
&+K n^{-1}  E\sum_{u_1,u_2=1}^{d} \sum_{l_1=1}^{m}\int_{\kappa(n,t)}^t \Big(\int_Z \Big|\frac{\partial^2 \gamma^i(x_r^n, z_2)}{\partial x^{u_1} \partial x^{u_2}} \Big|^2 \nu(dz_2) \Big)\big|\tilde{\sigma}^{(u_1,l_1)}(x_{\kappa(n,r)}^n)\tilde{\sigma}^{(u_2,l_1)}(x_{\kappa(n,r)}^n)\big|^2 dr  \notag
\\
&+K E\sum_{u=1}^d \sum_{j=1}^m \int_{\kappa(n,t)}^t \Big(\int_Z\Big|\frac{\partial \gamma^i(x_r^n, z_2)}{\partial x^u} - \frac{\partial \gamma^i(x_{\kappa(n,r)}^n, z_2)}{\partial x^u} \Big|^2\nu(dz_2) \Big) |\sigma^{(u,j)}(x_{\kappa(n,r)}^n)|^2dr
\\
& + K E\sum_{u=1}^d \sum_{j=1}^m \int_{\kappa(n,t)}^t \Big(\int_Z\Big|\frac{\partial \gamma^i(x_r^n, z_2)}{\partial x^u} \Big|^2\nu(dz_2)\Big) \big|\sigma_1^{(u,j)}(x_{\kappa(n,r)}^n) \big|^2 dr \notag
\\
&+K E\sum_{u=1}^d \sum_{j=1}^m \int_{\kappa(n,t)}^t \Big(\int_Z\Big|\frac{\partial \gamma^i(x_r^n, z_2)}{\partial x^u} \Big|^2 \nu(dz_2)\Big)\big|\sigma_2^{(u,j)}(x_{\kappa(n,r)}^n)\big|^2dr
\\
& + K E\sum_{u=1}^d \sum_{j=1}^m \int_{\kappa(n,t)}^t \Big(\int_Z\Big|\frac{\partial \gamma^i(x_r^n, z_2)}{\partial x^u} \Big|^2 \nu(dz_2)\Big)|\sigma_3^{(u,j)}(x_{\kappa(n,r)}^n)|^2 dr \notag
\\
&+K E\sum_{u=1}^d \int_{\kappa(n,t)}^t \int_Z \Big(\int_Z\Big|\frac{\partial \gamma^i(x_r^n, z_2)}{\partial x^u} - \frac{\partial \gamma^i(x_{\kappa(n,r)}^n, z_2)}{\partial x^u} \Big|^2 \nu(dz_2) \Big)|\gamma^{u}(x_{\kappa(n,r)}^n,z_1)|^2  \nu(dz_1) dr
\\
& + K E \sum_{u=1}^d \int_{\kappa(n,t)}^t \int_Z \Big(\int_Z\Big|\frac{\partial \gamma^i(x_r^n, z_2)}{\partial x^u}\Big|^2 \nu(dz_2)\Big)|\gamma_1^{u}(x_{\kappa(n,r)}^n,z_1)|^2 \nu(dz_1) dr
\\
& + K E\sum_{u=1}^d \int_{\kappa(n,t)}^t \int_Z \Big(\int_Z\Big|\frac{\partial \gamma^i(x_r^n, z_2)}{\partial x^u}\Big|^2 \nu(dz_2) \Big) |\gamma_2^{u}(x_{\kappa(n,r)}^n,z_1)|^2  \nu(dz_1) dr  \notag
\\
&+K E\sum_{u=1}^d \int_{\kappa(n,t)}^t \int_Z \Big(\int_Z\Big|\frac{\partial \gamma^i(x_r^n, z_2)}{\partial x^u}\Big|^2 \nu(dz_2)\Big) |\gamma_3^{u}(x_{\kappa(n,r)}^n,z_1)|^2\nu(dz_1) dr \notag
\\
& +K E\int_{\kappa(n,t)}^t \int_Z \Big[\Big(\int_Z|\gamma^i(x_r^n+\tilde{\gamma}(x_{\kappa(n,r)}^n,z_1), z_2)-\gamma^{i}(x_{\kappa(n,r)}^n+\gamma(x_{\kappa(n,r)}^n, z_1), z_2)|^2 \nu(dz_2)\Big)
\\
& \qquad+ \Big(\int_Z |\gamma^i(x_r^n, z_2)-\gamma^{i}(x_{\kappa(n,r)}^n, z_2)|^2 \nu(dz_2)\Big)
\end{align*}
\begin{align*}
&+ \sum_{u=1}^d \Big(\int_Z \Big|\frac{\partial \gamma^{i}(x_r^n,z_2)}{\partial x^u}-\frac{\partial \gamma^{i}(x_{\kappa(n,r)}^n,z_2)}{\partial x^u}\Big|\nu(dz_2)\Big) |\gamma^u(x_{\kappa(n,r)}^n,z_1)|^2
\\
&+ \sum_{u=1}^{d}\Big(\int_Z\Big|\frac{\partial \gamma^i(x_r^n, z_2)}{\partial x^{u}}\Big|^2\nu(dz_2)\Big) |\gamma_1^{u}(x_{\kappa(n,r)}^n,z_1)|^2
\\
& + \sum_{u=1}^{d}\Big(\int_Z\Big|\frac{\partial \gamma^i(x_r^n, z_2)}{\partial x^{u}}\Big|^2\nu(dz_2)\Big) |\gamma_2^{u}(x_{\kappa(n,r)}^n,z_1)|^2
\\
&+ \sum_{u=1}^{d}\Big(\int_Z\Big|\frac{\partial \gamma^i(x_r^n, z_2)}{\partial x^{u}}\Big|^2\nu(dz_2)\Big) |\gamma_3^{u}(x_{\kappa(n,r)}^n,z_1)|^2\Big] \nu(dz_1) dr.
\end{align*}
Thus,  due to Assumption A-3 and Remark \ref{rem:der:bounded},
\begin{align}  \label{eq:star}
E\int_Z|\gamma&(x_t^n, z_2)-\tilde{\gamma}(x_{\kappa(n,t)}^n, z_2)|^2\nu(dz_2)\notag
\\
& \leq K n^{-1} E\int_{\kappa(n,t)}^t |\tilde{b}^{n}(x_{\kappa(n,r)}^n)|^2dr+K n^{-1}  E\int_{\kappa(n,t)}^t |\tilde \sigma(x_{\kappa(n,r)}^n)|^2 dr\notag
\\
&+K E\int_{\kappa(n,t)}^t  |\sigma(x_{\kappa(n,r)}^n)|^2|x_r^n-x_{\kappa(n,r)}^n|^2 dr \notag
\\
& +K E\int_{\kappa(n,t)}^t |\sigma_1(x_{\kappa(n,r)}^n)|^2 dr +K E\int_{\kappa(n,t)}^t |\sigma_2(x_{\kappa(n,r)}^n)|^2 dr \notag
\\
& +KE\int_{\kappa(n,t)}^t  |\sigma_3(x_{\kappa(n,r)}^n)|^2dr \notag
\\
& +K\int_{\kappa(n,t)}^t  \int_Z |x_r^n-x^n_{\kappa(n,r)}|^2 |\gamma(x_{\kappa(n,r)}^n,z_2)|^2  \nu(dz_2) dr \notag
\\
&+K E\int_{\kappa(n,t)}^t \int_Z  |\gamma_1(x_{\kappa(n,r)}^n,z_2)|^2 \nu(dz_2)dr \notag
\\
& +KE\int_{\kappa(n,t)}^t \int_Z  |\gamma_2(x_{\kappa(n,r)}^n,z_2)|^2 \nu(dz_2)dr \notag
\\
&+K E\int_{\kappa(n,t)}^t \int_Z |\gamma_3(x_{\kappa(n,r)}^n,z_2)|^2 \nu(dz_2)dr
\end{align}
for any $t \in [0,T]$.  Finally, one completes the proof by using Remarks [\ref{as:sde:growth},  \ref{as:scheme:a^n}, \ref{as:sde:poly}], Lemmas [\ref{lem:b1:rate}, \ref{lem:b2:rate}, \ref{lem:b3:rate},  \ref{lem:c1:rate}, \ref{lem:c2:rate}, \ref{lem:c3:rate},  \ref{lem:scheme:momentbound}, \ref{lem:one-step:rate}, \ref{lem:one-step:sigma:rate}, \ref{lem:one-step:gamma:rate}] along with Corollaries [\ref{lem:tildeb:momentbound}, \ref{lem:tilde c:momentbound}].
\end{proof}
\begin{lem} \label{lem:b-tilde b:rate}
Let Assumptions A-1 to 
A-5 hold, then
\begin{align*}
\sup_{0 \leq t \leq T}E|\sigma(x_{t}^n)& - \tilde{\sigma}(x_{\kappa(n,t)}^n)|^2\leq K n^{-2}.
\end{align*}
\end{lem}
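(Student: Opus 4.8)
The plan is to mimic the proof of Lemma~\ref{c-tilde c:rate}, now applied to each scalar entry $\sigma^{(i,k)}$ of the diffusion matrix instead of $\gamma^i$; the argument is in fact simpler, since $\sigma$ carries no dependence on the jump mark $z_2$, so the outer integration against $\nu(dz_2)$ disappears and $|\sigma-\tilde\sigma|^2=\sum_{i=1}^d\sum_{k=1}^m|\sigma^{(i,k)}-\tilde\sigma^{(i,k)}|^2$. First I would apply It\^o's formula to $\sigma^{(i,k)}(x_t^n)$ exactly as in \eqref{eq:c:ito}, obtaining an It\^o drift term (involving $\tilde b^n$), a second-order term (involving $\tilde\sigma\,\tilde\sigma$), a Wiener term (involving $\frac{\partial \sigma^{(i,k)}(x_r^n)}{\partial x^u}\,\tilde\sigma$), a compensated-jump term (involving $\frac{\partial \sigma^{(i,k)}(x_r^n)}{\partial x^u}\,\tilde\gamma$), and a jump-remainder term. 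I would then subtract the scheme value $\tilde\sigma^{(i,k)}(x_{\kappa(n,t)}^n)$ of \eqref{eq:bn:tilde}: the leading terms $\sigma^{(i,k)}(x_{\kappa(n,t)}^n)$ cancel, while the Wiener term cancels the defining integral $\sigma_1^{(i,k)}$ of \eqref{eq:sigma:1} up to the replacement of the frozen integrand by the running one, the compensated-jump term cancels $\sigma_2^{(i,k)}$ in the same way, and the jump-remainder term cancels $\sigma_3^{(i,k)}$.

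After this regrouping, $\sigma^{(i,k)}(x_t^n)-\tilde\sigma^{(i,k)}(x_{\kappa(n,t)}^n)$ is a sum of residual terms of three types, all supported on the short interval $[\kappa(n,t),t]$ of length at most $n^{-1}$: (i) the It\^o drift and second-order terms; (ii) a Wiener integral whose integrand is either $\big(\frac{\partial \sigma^{(i,k)}(x_r^n)}{\partial x^u}-\frac{\partial \sigma^{(i,k)}(x_{\kappa(n,r)}^n)}{\partial x^u}\big)\sigma^{(u,j)}(x_{\kappa(n,r)}^n)$ or $\frac{\partial \sigma^{(i,k)}(x_r^n)}{\partial x^u}$ multiplied by $\sigma_1^{(u,j)},\sigma_2^{(u,j)},\sigma_3^{(u,j)}$; (iii) the analogous compensated-jump integrals, with the derivative-difference paired against $\gamma^u$ and the running derivative paired against $\gamma_1^u,\gamma_2^u,\gamma_3^u$, together with the jump-remainder difference. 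I would then take $E|\cdot|^2$, pass through H\"older's inequality for the time integrals, the It\^o isometry for the Wiener terms, and Lemma~\ref{lem:BGD:jump} (with $r=2$) for the jump terms.

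Each resulting term is $O(n^{-2})$ by the mechanism that one factor $n^{-1}$ comes from the length of $[\kappa(n,t),t]$ while a second factor $n^{-1}$ appears in the integrand. Concretely, the It\^o drift term is bounded via H\"older by $Kn^{-1}E\int_{\kappa(n,t)}^t|\tilde b^n(x_{\kappa(n,r)}^n)|^2dr$, and since $|\tilde b^n|\le|b|\le L(1+|x|^{\chi+2})$ (Remarks~\ref{as:scheme:a^n},~\ref{as:sde:poly}) with $E|x_{\kappa(n,r)}^n|^{2(\chi+2)}\le K$ by Lemma~\ref{lem:scheme:momentbound} (here $p\ge 6(\chi+2)$ is used), this is $O(n^{-2})$; the second-order term is handled identically using the boundedness of $\frac{\partial^2 \sigma^{(i,k)}}{\partial x^{u_1}\partial x^{u_2}}$ (Remark~\ref{rem:der:bounded}) and Corollary~\ref{lem:tildeb:momentbound} at exponent $4$. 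The derivative-difference Wiener and jump terms are controlled by Assumption A-3, which turns $|\frac{\partial \sigma^{(i,k)}(x_r^n)}{\partial x^u}-\frac{\partial \sigma^{(i,k)}(x_{\kappa(n,r)}^n)}{\partial x^u}|^2$ into $L|x_r^n-x_{\kappa(n,r)}^n|^2$, and then by Lemmas~\ref{lem:one-step:sigma:rate} and~\ref{lem:one-step:gamma:rate}, which supply the remaining $n^{-1}$. The terms carrying $\sigma_1,\sigma_2,\sigma_3$ and $\gamma_1,\gamma_2,\gamma_3$ use the boundedness of $\frac{\partial \sigma^{(i,k)}}{\partial x^u}$ (Remark~\ref{rem:der:bounded}) and then Lemmas~\ref{lem:b1:rate}--\ref{lem:b3:rate} and~\ref{lem:c1:rate}--\ref{lem:c3:rate} (at exponent $2$), each of which renders the integrand $O(n^{-1})$. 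I expect the main obstacle to be the jump-remainder difference term: here I would invoke the $C^2$ Taylor remainder for $\sigma^{(i,k)}$ as in \eqref{eq:y1y2}, split the increment $\sigma^{(i,k)}(x_r^n+\tilde\gamma)-\sigma^{(i,k)}(x_{\kappa(n,r)}^n+\gamma)$ and the derivative differences using Assumption A-3 and Remark~\ref{rem:der:bounded}, exploit $\nu(Z)<\infty$ to pull the finite mass out of the $z_1$-integral, and then reduce every piece to quantities already estimated, namely the one-step bound of Lemma~\ref{lem:one-step:rate} and the correction bounds above. Taking the supremum over $t\in[0,T]$ and summing the finitely many indices $i=1,\dots,d$, $k=1,\dots,m$ completes the proof.
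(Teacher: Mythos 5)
Your proposal is correct and takes essentially the same approach as the paper: the paper's own proof consists of the single remark that the lemma follows by the same arguments as Lemma \ref{c-tilde c:rate}, and your write-up carries out exactly that adaptation (It\^o's formula on $\sigma^{(i,k)}(x_t^n)$, cancellation against \eqref{eq:bn:tilde} with the residuals pairing derivative differences against $\sigma$, $\gamma$ and running derivatives against $\sigma_1,\sigma_2,\sigma_3$, $\gamma_1,\gamma_2,\gamma_3$, then the term-by-term $O(n^{-2})$ bounds via Assumptions A-2, A-3, Remarks \ref{rem:der:bounded} and \ref{as:sde:poly}, Lemmas \ref{lem:b1:rate}--\ref{lem:b3:rate}, \ref{lem:c1:rate}--\ref{lem:c3:rate}, \ref{lem:one-step:rate}, \ref{lem:one-step:sigma:rate}, \ref{lem:one-step:gamma:rate} and Corollary \ref{lem:tildeb:momentbound}). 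Your handling of the jump-remainder difference, including pulling out $\nu(Z)<\infty$, mirrors the estimate culminating in \eqref{eq:star} with the outer $\nu(dz_2)$-integration removed, so nothing genuinely new or divergent is involved.
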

\begin{proof} The lemma can be proved by adopting  similar arguments as used in the proof of Lemma \ref{c-tilde c:rate}.
\end{proof}
Before proceeding further, let us define $e_t^n := x_t-x_t^n$ i.e.
\begin{align*}
e_t^n & =  \int_{0}^{t}(b(x_s)-\tilde{b}^n(x_{\kappa(n,s)}^n))ds+\int_{0}^{t}(\sigma(x_s)-\tilde{\sigma}(x_{\kappa(n,s)}^n))dw_s
\\
& +\int_{0}^{t}\int_Z (\gamma(x_s,z_2)-\tilde \gamma(x_{\kappa(n,s)}^n,z_2))\tilde N(ds, dz_2)
\end{align*}
almost surely for any $t \in [0, T]$ and $n \in \mathbb{N}$. Also, the notation $e_t^{n,k}$ stands for the $k$-th element of $e_t^n$ for every $k=1,\ldots,d$, $t \in [0,T]$ and $n \in \mathbb{N}$.
\begin{lem} \label{lem:a-tilde a:rate}
Let Assumptions A-1 to 
A-5 hold, then
\begin{equation*}
E\int_{0}^{t} e_s^n(b(x_s^n)-b(x_{\kappa(n,s)}^n))ds\leq K  \int_{0}^t \sup_{0 \leq r \leq s}E|e_r^n|^2   ds + K n^{-\frac{2}{2+\delta}-1}
\end{equation*}
for any $t \in [0,T]$.
\end{lem}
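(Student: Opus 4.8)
The plan is to exploit the martingale structure hidden in the one-step increment $x_s^n-x_{\kappa(n,s)}^n$ together with the polynomial-Lipschitz bound for $b$ from Remark \ref{as:sde:poly} (equivalently Assumption A-5). The natural starting point is the decomposition $e_s^n=e_{\kappa(n,s)}^n+(e_s^n-e_{\kappa(n,s)}^n)$, which splits the integrand into an $\mathscr{F}_{\kappa(n,s)}$-measurable factor and a genuinely small one-step remainder. For the drift difference I would use a first-order Taylor expansion of $b$ about $x_{\kappa(n,s)}^n$, writing $b(x_s^n)-b(x_{\kappa(n,s)}^n)=\nabla b(x_{\kappa(n,s)}^n)(x_s^n-x_{\kappa(n,s)}^n)+R_s$, where the second-derivative bound of Remark \ref{as:sde:poly:Der} gives $|R_s|\le K(1+|x_{\kappa(n,s)}^n|^{\chi}+|x_s^n|^{\chi})|x_s^n-x_{\kappa(n,s)}^n|^2$.

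For the grid-measurable piece $E\int_0^t e_{\kappa(n,s)}^n(b(x_s^n)-b(x_{\kappa(n,s)}^n))\,ds$ I would condition on $\mathscr{F}_{\kappa(n,s)}$. Since the diffusion and compensated-jump integrals making up $x_s^n-x_{\kappa(n,s)}^n$ have zero conditional mean, the conditional expectation of the linear term collapses to its drift contribution $\nabla b(x_{\kappa(n,s)}^n)\tilde b^n(x_{\kappa(n,s)}^n)(s-\kappa(n,s))$, which is of order $n^{-1}$ (polynomially weighted), while $E[R_s\mid\mathscr{F}_{\kappa(n,s)}]$ is controlled by the order-$n^{-1}$ conditional quadratic variation. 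Cauchy--Schwarz applied to the factor $e_{\kappa(n,s)}^n$ and then Young's inequality produce a bound of the form $K\int_0^t\sup_{0\le r\le s}E|e_r^n|^2\,ds+Kn^{-2}$, the budget $p\ge 6(\chi+2)$ ensuring the polynomial weights are square-integrable.

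For the increment piece $E\int_0^t(e_s^n-e_{\kappa(n,s)}^n)(b(x_s^n)-b(x_{\kappa(n,s)}^n))\,ds$ I would insert the explicit representation of $e_s^n-e_{\kappa(n,s)}^n$ as a drift integral plus compensated diffusion and jump integrals over $[\kappa(n,s),s]$, simultaneously using the linear Taylor term. The leading contributions are the cross-covariances between the compensated integrals in $e_s^n-e_{\kappa(n,s)}^n$ and those in $x_s^n-x_{\kappa(n,s)}^n$; these I would evaluate by It\^o's isometry, where orthogonality of the Wiener and Poisson martingales kills the mixed products and Assumption A-2 shows that every surviving integrand carries a differenced, hence small, coefficient $\sigma(x_r)-\sigma(x_{\kappa(n,s)}^n)$ (together with the Milstein corrections controlled by Lemmas \ref{lem:b1:rate}--\ref{lem:c3:rate} and Lemma \ref{lem:form:rate}). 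The super-linear remainder $R_s$ and the drift-drift interactions are then bounded through Remark \ref{as:sde:poly} and the one-step estimate of Lemma \ref{lem:one-step:rate} in $\mathcal{L}^{2+\delta}$: routing the polynomial weights through H\"older's inequality at an exponent small enough to remain within the available $p$-th moments, while leaving the one-step increment at the single power $2+\delta$, is exactly what turns the one-step rate $n^{-1}$ into the factor $n^{-\frac{2}{2+\delta}}$, which combined with the extra subinterval factor $n^{-1}$ yields the claimed $n^{-\frac{2}{2+\delta}-1}$.

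The main obstacle is the super-linear growth of $b$: the crude estimate $|e_s^n|\,|b(x_s^n)-b(x_{\kappa(n,s)}^n)|$ handled by Young's inequality only delivers $n^{-\frac{2}{2+\delta}}$, so one must genuinely harvest the additional factor $n^{-1}$ from the martingale cancellation in the grid-measurable part and from the differenced structure of $e_s^n-e_{\kappa(n,s)}^n$. The delicate accounting is to balance each polynomial weight against the finite moment supply --- this is precisely why $p\ge 6(\chi+2)$ is required and why $\delta$ must be confined to $(4/(p-2),1)$, the lower bound guaranteeing that the weight multiplying the one-step increment at power $2+\delta$ is integrable. Summing the two pieces and absorbing the resulting $\sup_{0\le r\le s}E|e_r^n|^2$ contributions into the first term on the right-hand side then completes the argument.
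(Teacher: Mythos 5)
Your proposal is correct in substance and rests on the same three pillars as the paper's proof --- splitting off the grid-measurable error $e^n_{\kappa(n,s)}$, martingale cancellation against it, and H\"older's inequality at the conjugate pair $\big(\tfrac{2+\delta}{2},\tfrac{2+\delta}{\delta}\big)$ to convert the one-step jump-driven bound $E|\cdot|^{2+\delta}\leq Kn^{-1}$ into the factor $n^{-\frac{2}{2+\delta}}$, with the subinterval length supplying the extra $n^{-1}$ --- but it is organized differently. The paper never Taylor-expands $b$ deterministically: it applies It\^o's formula to $b^k(x^n_s)-b^k(x^n_{\kappa(n,s)})$, yielding the five terms of \eqref{eq:F1+.+F5}, and substitutes the representation of $e^{n,k}_s$ only inside the martingale terms $F_3,F_4,F_5$; the rate-determining contribution is the $F_{31}$-type pairing of $e^n_{\kappa(n,s)}$ with stochastic integrals of the Milstein corrections $\sigma_2,\sigma_3$ (and $\gamma_2,\gamma_3$), whose $\mathcal{L}^{2+\delta}$-norms are only $O(n^{-1/(2+\delta)})$ by Lemmas \ref{lem:b2:rate}, \ref{lem:b3:rate}, \ref{lem:c2:rate}, \ref{lem:c3:rate}, while in your accounting the same $n^{-\frac{2}{2+\delta}-1}$ emerges from pairing the quadratic Taylor remainder with the increment $e^n_s-e^n_{\kappa(n,s)}$. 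The paper's It\^o route has the structural advantage that the second-order and jump remainders ($F_2$, $F_5$) appear already in integrated compensator form over an interval of length $n^{-1}$, so all cancellations are unconditional expectations of martingale increments; your pathwise remainder $R_s\leq K(1+|x^n_{\kappa(n,s)}|^{\chi}+|x^n_s|^{\chi})|x^n_s-x^n_{\kappa(n,s)}|^2$ is delicate in the jump setting, because $E|x^n_s-x^n_{\kappa(n,s)}|^{2q}$ is only $O(n^{-1})$ for every $q$ (Lemma \ref{lem:one-step:rate}), so an unconditional Cauchy--Schwarz against $e^n_{\kappa(n,s)}$ delivers only $n^{-1}$, which is insufficient. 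Your plan survives precisely because you condition on $\mathscr{F}_{\kappa(n,s)}$ first, keeping the $O(n^{-1})$ conditional bound linear before Young's inequality squares it, but be aware that this requires conditional analogues of the one-step moment estimates (Lemma \ref{lem:one-step:norate} and Corollaries \ref{lem:tildeb:momentbound}, \ref{lem:tilde c:momentbound}) that the paper never states. Two further points of care: the surviving integrands in your cross-covariances carry $\sigma(x_r)-\tilde{\sigma}(x^n_{\kappa(n,r)})$ and $\gamma(x_r,z)-\tilde{\gamma}(x^n_{\kappa(n,r)},z)$ with the \emph{corrected} coefficients, so you need Lemmas \ref{lem:b-tilde b:rate} and \ref{c-tilde c:rate} (or must reassemble them from Lemmas \ref{lem:b1:rate}--\ref{lem:c3:rate}), not merely the plain difference $\sigma(x_r)-\sigma(x^n_{\kappa(n,s)})$ you mention; and wherever a polynomial weight multiplies $|e^n_r|$ you must choose H\"older exponents so that the error lands at power exactly $2$ (the paper does this in its $F_{32}$ step with exponents $\sqrt{2}$ and $2+\sqrt{2}$), since only $\sup_{r}E|e^n_r|^2$ is available for Gronwall's lemma.
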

\begin{proof}
First, one applies It\^{o}'s formula to obtain $b^k(x_s^n)-b^k(x_{\kappa(n,s)}^n)$ and then writes the following,
\begin{align}  \label{eq:F1+.+F5}
E\int_{0}^t & e_s^n(b(x_s^n)-b(x_{\kappa(n,s)}^n))ds=E\sum_{k=1}^d\int_{0}^t e_s^{n,k}\{b^k(x_s^n)-b^k(x_{\kappa(n,s)}^n)\}ds \notag
\\
= &  E\sum_{k,i=1}^d  \int_{0}^t e_s^{n,k} \int_{\kappa(n,s)}^s \frac{\partial b^k(x_r^n)}{\partial x^i} \tilde{b}^{n,i}(x_{\kappa(n,r)}^n)dr ds \notag
\\
 &+\frac{1}{2}\sum_{k,i,j=1}^{d} E\int_{0}^t e_s^{n,k}  \int_{\kappa(n,s)}^s \frac{\partial^2 b^k(x_r^n)}{\partial x^i \partial x^j} \sum_{l=1}^{m} \tilde{\sigma}^{(i,l)}(x_{\kappa(n,r)}^n)\tilde{\sigma}^{(j,l)}(x_{\kappa(n,r)}^n)dr ds \notag
 \\
 & +\sum_{k,i=1}^d E \int_{0}^t e_s^{n,k}\int_{\kappa(n,s)}^s \frac{\partial b^k(x_r^n)}{\partial x^i} \sum_{l=1}^m \tilde{\sigma}^{(i,l)}(x_{\kappa(n,r)}^n)dw_r^l ds \notag
\\
&+ \sum_{k,i=1}^d E\int_{0}^t  e_s^{n,k} \int_{\kappa(n,s)}^s \int_Z \frac{\partial b^k(x_r^n)}{\partial x^i} \tilde{\gamma}^{i}(x_{\kappa(n,r)}^n,z_2)\tilde N(dr, dz_2) ds \notag
\\
&+ \sum_{k=1}^d E\int_{0}^t  e_s^{n,k} \int_{\kappa(n,s)}^s \int_Z \Big(b^k(x_r^n+\tilde{\gamma}(x_{\kappa(n,r)}^n,z_2)) \notag
\\
& \qquad\qquad -b^k(x_r^n)-\sum_{i=1}^{d}\frac{\partial b^k(x_r^n)}{\partial x^{i}}{\tilde{\gamma}}^{i}(x_{\kappa(n,r)}^n,z_2)\Big)N(dr, dz_2) ds \notag
\\
=:&  F_1+F_2+F_3+F_4+F_5
\end{align}
for any $t \in [0,T]$. By using Young's and H\"older's inequalities, $F_1$ can be estimated as
\begin{align*}
F_1 &:=  \sum_{k,i=1}^d  E \int_{0}^t e_s^{n,k} \int_{\kappa(n,s)}^s \frac{\partial b^k(x_r^n)}{\partial x^i} \tilde{b}^{n,i}(x_{\kappa(n,r)}^n)dr ds
\\
& \leq  K  E \int_{0}^t  |e_s^{n}|^2 ds +K n^{-1}\sum_{k,i=1}^d  E \int_{0}^t  \int_{\kappa(n,s)}^s \Big|\frac{\partial b^k(x_r^n)}{\partial x^i}\Big|^2 |\tilde{b}^{n,i}(x_{\kappa(n,r)}^n)|^2 dr ds
\end{align*}
and then due to Remarks [\ref{as:scheme:a^n}, \ref{as:sde:poly:Der}, \ref{as:sde:poly}] and Lemma \ref{lem:scheme:momentbound}, one obtains
\begin{align}
F_1 \leq K   \int_{0}^t  \sup_{0 \leq r \leq s}E |e_r^{n}|^2 ds +K n^{-2}. \label{eq:F1}
\end{align}
Similarly, $F_2$ can be estimated by using Young's and H\"older's inequalities along with Remarks [\ref{as:sde:growth},  \ref{as:sde:poly:Der}], Corollary \ref{lem:tildeb:momentbound} and Lemma \ref{lem:scheme:momentbound} as,
\begin{align}
F_2&:=\frac{1}{2}\sum_{k,i,j=1}^{d}E\int_{0}^t e_s^{n,k}  \int_{\kappa(n,s)}^s \frac{\partial^2 b^k(x_r^n)}{\partial x^i \partial x^j} \sum_{l=1}^{m} \tilde{\sigma}^{(i,l)}(x_{\kappa(n,r)}^n)\tilde{\sigma}^{(j,l)}(x_{\kappa(n,r)}^n)dr ds \notag
\\
& \leq K   \int_{0}^t  \sup_{0 \leq r \leq s}E |e_r^{n}|^2 ds +K n^{-2}. \label{eq:F2}
\end{align}
For the estimation of $F_3$, one uses
\begin{align*}
e_s^{n,k}=&e_{\kappa(n,s)}^{n,k}+ \int_{\kappa(n,s)}^s (b^k(x_r)-\tilde{b}^{n,k}(x_{\kappa(n,r)}^n) ) dr
\\
& + \sum_{v=1}^m\int_{\kappa(n,s)}^s\big(\sigma^{(k,v)}(x_r)-\tilde{\sigma}^{(k,v)}(x_{\kappa(n,r)}^n)\big)dw_r^v \notag
\\
&\hspace{1cm}+\int_{\kappa(n,s)}^s \int_Z \big(\gamma^k(x_r, z_2)-\tilde{\gamma}^{k}(x_{\kappa(n,r)}^n, z_2)\big)\tilde N(dr,dz_2)
\end{align*}
to obtain the following,
\begin{align}
F_3&  :=K\sum_{k,i=1}^d  \sum_{l=1}^{m} E \int_{0}^t e_s^{n,k}\int_{\kappa(n,s)}^s\frac{\partial b^k(x_r^n)}{\partial x^i}  \tilde{\sigma}^{(i,l)}(x_{\kappa(n,r)}^n)dw_r^l ds  \notag
\\
& \leq  K \sum_{k,i=1}^d   \sum_{l=1}^{m} E \int_{0}^t e_{\kappa(n,s)}^{n,k}  \int_{\kappa(n,s)}^s \frac{\partial b^k(x_r^n)}{\partial x^i}  \tilde{\sigma}^{(i,l)}(x_{\kappa(n,r)}^n)dw_r^l ds \notag
\\
 & + K \sum_{k,i=1}^d  \sum_{l=1}^{m} E \int_{0}^t \int_{\kappa(n,s)}^s (b^k(x_r)-\tilde{b}^{n,k}(x_{\kappa(n,r)}^n) ) dr \notag
 \\
& \qquad \times \int_{\kappa(n,s)}^s\frac{\partial b^k(x_r^n)}{\partial x^i}  \tilde{\sigma}^{(i,l)}(x_{\kappa(n,r)}^n)dw_r^l ds \notag
\\
&+ K \sum_{k,i=1}^d   \sum_{l,v=1}^{m} E \int_{0}^t \int_{\kappa(n,s)}^s(\sigma^{(k,v)}(x_r)-\tilde{\sigma}^{(k,v)}(x_{\kappa(n,r)}^n))dw_r^v  \notag
\\
& \qquad \times \int_{\kappa(n,s)}^s\frac{\partial b^k(x_r^n)}{\partial x^i}  \tilde{\sigma}^{(i,l)}(x_{\kappa(n,r)}^n)dw_r^l ds \notag
\\
& +K \sum_{k,i=1}^d  \sum_{l=1}^{m} E \int_{0}^t \int_{\kappa(n,s)}^s \int_Z \big(\gamma^k(x_r, z_2)-\tilde{\gamma}^{k}(x_{\kappa(n,r)}^n, z_2)\big)\tilde{N}(dr,dz_2)\notag
 \\
& \qquad \times \int_{\kappa(n,s)}^s\frac{\partial b^k(x_r^n)}{\partial x^i}  \tilde{\sigma}^{(i,l)}(x_{\kappa(n,r)}^n)dw_r^l ds \notag
\\
&=:  F_{31}+F_{32}+F_{33}+F_{34}. \label{eq:F31+.+E34}
\end{align}
Here, $F_{31}$ is given by
\begin{align*}
F_{31}:= & K \sum_{k,i=1}^d   \sum_{l=1}^{m} E \int_{0}^t e_{\kappa(n,s)}^{n,k}  \int_{\kappa(n,s)}^s \frac{\partial b^k(x_r^n)}{\partial x^i}  \tilde{\sigma}^{(i,l)}(x_{\kappa(n,r)}^n)dw_r^l ds  \notag
\\
= & K \sum_{k,i=1}^d   \sum_{l=1}^{m} E \int_{0}^t e_{\kappa(n,s)}^{n,k}  \int_{\kappa(n,s)}^s \frac{\partial b^k(x_r^n)}{\partial x^i}  \sigma^{(i,l)}(x_{\kappa(n,r)}^n)dw_r^l ds
\\
+& K \sum_{k,i=1}^d   \sum_{l=1}^{m} E \int_{0}^t e_{\kappa(n,s)}^{n,k}  \int_{\kappa(n,s)}^s \frac{\partial b^k(x_r^n)}{\partial x^i}  \sigma_1^{(i,l)}(x_{\kappa(n,r)}^n)dw_r^l ds
\\
+& K \sum_{k,i=1}^d   \sum_{l=1}^{m} E \int_{0}^t e_{\kappa(n,s)}^{n,k}  \int_{\kappa(n,s)}^s \frac{\partial b^k(x_r^n)}{\partial x^i}  \sigma_2^{(i,l)}(x_{\kappa(n,r)}^n)dw_r^l ds
\\
+& K \sum_{k,i=1}^d   \sum_{l=1}^{m} E \int_{0}^t e_{\kappa(n,s)}^{n,k}  \int_{\kappa(n,s)}^s \frac{\partial b^k(x_r^n)}{\partial x^i}  \sigma_3^{(i,l)}(x_{\kappa(n,r)}^n)dw_r^l ds
\end{align*}
which on the application of Young's inequality gives,
\begin{align}
F_{31} & \leq   K  E \int_{0}^t |e_{\kappa(n,s)}^{n}|^2 ds +  K \sum_{k,i=1}^d   \sum_{l=1}^{m} E \int_{0}^t \Big|\int_{\kappa(n,s)}^s \frac{\partial b^k(x_r^n)}{\partial x^i}  \sigma_1^{(i,l)}(x_{\kappa(n,r)}^n)dw_r^l\Big|^2 ds  \notag
\\
&\quad +  K \sum_{k,i=1}^d   \sum_{l=1}^{m} E \int_{0}^t \Big|\int_{\kappa(n,s)}^s \frac{\partial b^k(x_r^n)}{\partial x^i}  \sigma_2^{(i,l)}(x_{\kappa(n,r)}^n)dw_r^l\Big|^2 ds  \notag
\\
& \quad+  K \sum_{k,i=1}^d   \sum_{l=1}^{m} E \int_{0}^t \Big|\int_{\kappa(n,s)}^s \frac{\partial b^k(x_r^n)}{\partial x^i}  \sigma_3^{(i,l)}(x_{\kappa(n,r)}^n)dw_r^l\Big|^2 ds  \notag
\end{align}
and then on using an elementary inequality of stochastic integrals along with Remark \ref{as:sde:poly:Der}, one obtains
\begin{align}
F_{31} & \leq   K  E \int_{0}^t |e_{\kappa(n,s)}^{n}|^2 ds +  K  E \int_{0}^t \int_{\kappa(n,s)}^s (1+|x_r^n|^\chi)^2  |\sigma_1(x_{\kappa(n,r)}^n)|^2 dr ds  \notag
\\
& \quad +  K  E \int_{0}^t \int_{\kappa(n,s)}^s (1+|x_r^n|^\chi)^2  |\sigma_2(x_{\kappa(n,r)}^n)|^2 dr ds   \notag
\\
& +  K  E \int_{0}^t \int_{\kappa(n,s)}^s (1+|x_r^n|^\chi)^2  |\sigma_3(x_{\kappa(n,r)}^n)|^2 dr ds.  \notag
\end{align}
Also, by using H\"older's inequality and Lemmas [\ref{lem:b1:rate}, \ref{lem:b2:rate}, \ref{lem:b3:rate}, \ref{lem:scheme:momentbound}],
\begin{align} \label{eq:F31}
F_{31} & \leq   K  E \int_{0}^t |e_{\kappa(n,s)}^{n}|^2 ds \notag
\\
& +  K  \int_{0}^t \int_{\kappa(n,s)}^s (E (1+|x_r^n|^\chi)^\frac{2(2+\delta)}{\delta})^\frac{\delta}{2+\delta}  (E|\sigma_1(x_{\kappa(n,r)}^n)|^{2+\delta})^\frac{2}{2+\delta} dr ds  \notag
\\
& \quad +  K  \int_{0}^t \int_{\kappa(n,s)}^s (E (1+|x_r^n|^\chi)^\frac{2(2+\delta)}{\delta})^\frac{\delta}{2+\delta}  (E|\sigma_2(x_{\kappa(n,r)}^n)|^{2+\delta})^\frac{2}{2+\delta} dr ds   \notag
\\
&\quad+  K  \int_{0}^t \int_{\kappa(n,s)}^s (E (1+|x_r^n|^\chi)^\frac{2(2+\delta)}{\delta})^\frac{\delta}{2+\delta}  (E|\sigma_3(x_{\kappa(n,r)}^n)|^{2+\delta})^\frac{2}{2+\delta} dr ds  \notag
\\
& \leq   K  \int_{0}^t \sup_{0 \leq r \leq s}E |e_r^{n}|^2 ds +  K n^{-\frac{2}{2+\delta}-1}.
\end{align}
Further, due to Remark \ref{as:sde:poly},
\begin{align} \label{eq:b:split:new}
|b^k(x_r)-\tilde{b}^{n,k}(x_{\kappa(n,r)}^n)| &\leq  L(1+|x_r|^\chi+|x_r^n|^\chi) |x_r-x_r^n|  \notag
\\
& +L(1+|x_r^n|^\chi+|x_{\kappa(n,r)}^n|^\chi) |x_r^n-x_{\kappa(n,r)}^n| \notag
\\
&+|b^k(x_{\kappa(n,r)}^n)-\tilde{b}^{n,k}(x_{\kappa(n,r)}^n)|
\end{align}
for any $r \in [0,T]$, $k=1,\ldots,d$,  which on substituting in $F_{32}$ gives
\begin{align*}
F_{32}& :=K \sum_{k,i=1}^d  \sum_{l=1}^{m} E \int_{0}^t \int_{\kappa(n,s)}^s (b^k(x_r)-\tilde{b}^{n,k}(x_{\kappa(n,r)}^n) ) dr
\\
& \qquad \times \int_{\kappa(n,s)}^s \frac{\partial b^k(x_r^n)}{\partial x^i}  \tilde{\sigma}^{(i,l)}(x_{\kappa(n,r)}^n)dw_r^l ds \notag
\\
& \leq   K \sum_{k,i=1}^d  \sum_{l=1}^{m} E \int_{0}^t\int_{\kappa(n,s)}^s(1+|x_r|^\chi+|x_r^n|^\chi) |x_r-x_r^n| dr
\\
& \qquad \times \Big| \int_{\kappa(n,s)}^s \frac{\partial b^k(x_r^n)}{\partial x^i}  \tilde{\sigma}^{(i,l)}(x_{\kappa(n,r)}^n)dw_r^l \Big|ds \notag
\\
& +K \sum_{k,i=1}^d  \sum_{l=1}^{m} E \int_{0}^t\int_{\kappa(n,s)}^s (1+|x_r^n|^\chi+|x_{\kappa(n,r)}^n|^\chi) |x_r^n-x_{\kappa(n,r)}^n| dr\Big|
\\
&\qquad \times \int_{\kappa(n,s)}^s \frac{\partial b^k(x_r^n)}{\partial x^i}  \tilde{\sigma}^{(i,l)}(x_{\kappa(n,r)}^n)dw_r^l \Big|ds \notag
\\
& +K \sum_{k,i=1}^d  \sum_{l=1}^{m} E \int_{0}^t\int_{\kappa(n,s)}^s|b^k(x_{\kappa(n,r)}^n)-\tilde{b}^{n,k}(x_{\kappa(n,r)}^n)| dr \Big|
\\
&\qquad \times\int_{\kappa(n,s)}^s \frac{\partial b^k(x_r^n)}{\partial x^i}  \tilde{\sigma}^{(i,l)}(x_{\kappa(n,r)}^n)dw_r^l\Big| ds.
\end{align*}
Now, one uses H\"{o}lder's inequality with exponent $\sqrt{2}$ and $2+\sqrt{2}$ to obtain,
\begin{align*}
& F_{32}\leq   K \sum_{k,i=1}^d  \sum_{l=1}^{m}  \int_{0}^t \Big(n^{-\sqrt{2}+1}E \int_{\kappa(n,s)}^s(1+|x_r|^\chi+|x_r^n|^\chi)^{\sqrt{2}} |x_r-x_r^n|^{\sqrt{2}} dr\Big)^{\frac{1}{\sqrt{2}}} \notag
\\
& \qquad \times \Big(E\Big|\int_{\kappa(n,s)}^s \frac{\partial b^k(x_r^n)}{\partial x^i}  \tilde{\sigma}^{(i,l)}(x_{\kappa(n,r)}^n)dw_r^l\Big|^{2+\sqrt{2}}\Big)^\frac{1}{2+\sqrt{2}} ds \notag
\\
& +K \sum_{k,i=1}^d  \sum_{l=1}^{m}  \int_{0}^t \Big(n^{-\sqrt{2}+1} E\int_{\kappa(n,s)}^s (1+|x_r^n|^\chi+|x_{\kappa(n,r)}^n|^\chi)^{\sqrt{2}} |x_r^n-x_{\kappa(n,r)}^n|^{\sqrt{2}} dr \Big)^{\frac{1}{\sqrt{2}}} \notag
\\
& \qquad \times \Big(E\Big|\int_{\kappa(n,s)}^s \frac{\partial b^k(x_r^n)}{\partial x^i}  \tilde{\sigma}^{(i,l)}(x_{\kappa(n,r)}^n)dw_r^l\Big|^{2+\sqrt{2}}\Big)^\frac{1}{2+\sqrt{2}} ds \notag
\\
& +K \sum_{k,i=1}^d  \sum_{l=1}^{m}  \int_{0}^t \Big(n^{-1} E\int_{\kappa(n,s)}^s|b^k(x_{\kappa(n,r)}^n)-\tilde{b}^{n,k}(x_{\kappa(n,r)}^n)|^2 dr
\\
& \qquad \times E\Big|\int_{\kappa(n,s)}^s \frac{\partial b^k(x_r^n)}{\partial x^i}  \tilde{\sigma}^{(i,l)}(x_{\kappa(n,r)}^n)dw_r^l\Big|^2 \Big)^\frac{1}{2}ds \notag
\end{align*}
which on the application of H\"older's inequality along with Remark  \ref{as:sde:poly:Der}, Lemmas [\ref{lem:sde:momentbound},  \ref{lem:scheme:momentbound}, \ref{lem:form:rate}] and Corollary \ref{lem:tildeb:momentbound} yields the following,
\begin{align*}
F_{32}& \leq   K n^{-\frac{1}{2}} \int_{0}^t \Big(n^{-\sqrt{2}+1} \int_{\kappa(n,s)}^s \big(E(1+|x_r|^\chi+|x_r^n|^\chi)^{\sqrt{2}(2+\sqrt{2})}\big)^\frac{1}{2+\sqrt{2}}
\\
& \qquad \times \big(E|e_r^n|^2\big)^\frac{1}{\sqrt{2}} dr\Big)^{\frac{1}{\sqrt{2}}} ds \notag
\\
& +K n^{-\frac{1}{2}}  \int_{0}^t \Big(n^{-\sqrt{2}+1} \int_{\kappa(n,s)}^s \big(E(1+|x_r^n|^\chi+|x_{\kappa(n,r)}^n|^\chi)^{\sqrt{2}(2+\sqrt{2})}\big)^\frac{1}{2+\sqrt{2}}
\\
&\qquad \times\big(E|x_r^n-x_{\kappa(n,r)}^n|^2\big)^\frac{1}{\sqrt{2}} dr \Big)^{\frac{1}{\sqrt{2}}}  ds  +K  n^{-\frac{5}{2}}\notag
\\
& \leq   K n^{-\frac{3}{2}} \int_{0}^t \big(\sup_{0 \leq r \leq s}E|e_r^n|^2\big)^\frac{1}{2}  ds +K n^{-2} +K  n^{-\frac{5}{2}}. \notag
\end{align*}
Thus, due to Young's inequality, one obtains
\begin{align} \label{eq:F32}
F_{32}\leq   K \int_{0}^t \sup_{0 \leq r \leq s}E|e_r^n|^2 ds + K n^{-2}.
\end{align}
Again, for any $r \in [0,T]$, $k=1,\ldots,d$ and $v=1,\ldots,m$,  one uses
\begin{align} \label{eq:sigma:split:new}
\sigma^{(k,v)}&(x_r)-\tilde{\sigma}^{(k,v)}(x_{\kappa(n,r)}^n)=(\sigma^{(k,v)}(x_r)-\sigma^{(k,v)}(x_r^n)) \notag
\\
& \qquad \qquad +(\sigma^{(k,v)}(x_r^n)-\tilde{\sigma}^{(k,v)}(x_{\kappa(n,r)}^n))
\end{align}
to express $F_{33}$ as below,
\begin{align*}
F_{33}&  := K \sum_{k,i=1}^d   \sum_{l,v=1}^{m} E \int_{0}^t \int_{\kappa(n,s)}^s(\sigma^{(k,v)}(x_r)-\tilde{\sigma}^{(k,v)}(x_{\kappa(n,r)}^n))dw_r^v
\\
&\qquad \times \int_{\kappa(n,s)}^s \frac{\partial b^k(x_r^n)}{\partial x^i}  \tilde{\sigma}^{(i,l)}(x_{\kappa(n,r)}^n)dw_r^l ds
\\
& = K \sum_{k,i=1}^d   \sum_{l,v=1}^{m} E \int_{0}^t \int_{\kappa(n,s)}^s(\sigma^{(k,v)}(x_r)-{\sigma}^{(k,v)}(x_r^n))dw_r^v
\\
&\qquad \times \int_{\kappa(n,s)}^s \frac{\partial b^k(x_r^n)}{\partial x^i}  \tilde{\sigma}^{(i,l)}(x_{\kappa(n,r)}^n)dw_r^l ds
\\
&+ K \sum_{k,i=1}^d   \sum_{l,v=1}^{m} E \int_{0}^t \int_{\kappa(n,s)}^s(\sigma^{(k,v)}(x_r^n)-\tilde{\sigma}^{(k,v)}(x_{\kappa(n,r)}^n))dw_r^v
\\
&\qquad \times \int_{\kappa(n,s)}^s \frac{\partial b^k(x_r^n)}{\partial x^i}  \tilde{\sigma}^{(i,l)}(x_{\kappa(n,r)}^n)dw_r^l ds.
\end{align*}
Further, H\"{o}lder's inequality implies
\begin{align*}
F_{33} & \leq K \sum_{k,i=1}^d   \sum_{l,v=1}^{m}  \int_{0}^t \Big(E\Big|\int_{\kappa(n,s)}^s(\sigma^{(k,v)}(x_r)-{\sigma}^{(k,v)}(x_r^n))dw_r^v\Big|^2
\\
&\qquad \times E\Big| \int_{\kappa(n,s)}^s \frac{\partial b^k(x_r^n)}{\partial x^i}  \tilde{\sigma}^{(i,l)}(x_{\kappa(n,r)}^n)dw_r^l\Big|^2\Big)^{\frac{1}{2}} ds
\\
&+ K \sum_{k,i=1}^d   \sum_{l,v=1}^{m}  \int_{0}^t \Big(E\Big|\int_{\kappa(n,s)}^s(\sigma^{(k,v)}(x_r^n)-\tilde{\sigma}^{(k,v)}(x_{\kappa(n,r)}^n))dw_r^v\Big|^2
\\
&\qquad \times E\Big| \int_{\kappa(n,s)}^s \frac{\partial b^k(x_r^n)}{\partial x^i}  \tilde{\sigma}^{(i,l)}(x_{\kappa(n,r)}^n)dw_r^l\Big|^2\Big)^\frac{1}{2} ds
\end{align*}
which again due to H\"older's inequality, Remark  \ref{as:sde:poly:Der}, Lemma \ref{lem:scheme:momentbound}  and Corollary \ref{lem:tildeb:momentbound} yields the following,
\begin{align*}
F_{33} & \leq K n^{-\frac{1}{2}}  \int_{0}^t \Big(E\int_{\kappa(n,s)}^s|\sigma(x_r)-\sigma(x_r^n)|^2dr\Big)^{\frac{1}{2}} ds
\\
& + K n^{-\frac{1}{2}} \int_{0}^t \Big(E\int_{\kappa(n,s)}^s|\sigma(x_r^n)-\tilde \sigma(x_{\kappa(n,r)}^n)|^2dr\Big)^\frac{1}{2} ds.
\end{align*}
and then Assumption A-2 and Lemma \ref{lem:b-tilde b:rate} gives
\begin{align}
F_{33}  \leq K  n^{-1} \int_{0}^t \Big( \sup_{0 \leq r \leq s} E|e_r^n|^2 \Big)^{\frac{1}{2}}  ds + K n^{-2}. \notag
\end{align}
Thus, on the application of Young's inequality, one obtains
\begin{align}
F_{33} \leq K  \int_{0}^t  \sup_{0 \leq r \leq s} E|e_r^n|^2 ds + K n^{-2}. \label{eq:F33}
\end{align}
Finally, for any $r \in [0,T]$ $k=1,\ldots,d$ and $z_2 \in Z$,  one uses
$$
\gamma^k(x_r, z_2)-\tilde{\gamma}^{k}(x_{\kappa(n,r)}^n, z_2)=(\gamma^k(x_r, z_2)-\gamma^k(x_r^n, z_2))+(\gamma^k(x_r^n, z_2)-\tilde{\gamma}^{k}(x_{\kappa(n,r)}^n, z_2))
$$
to express $F_{34}$ as the following
\begin{align}
F_{34}& := K \sum_{k,i=1}^d  \sum_{l=1}^{m} E \int_{0}^t \int_{\kappa(n,s)}^s \int_Z \big(\gamma^k(x_r, z_2)-\tilde{\gamma}^{k}(x_{\kappa(n,r)}^n, z_2)\big)\tilde{N}(dr,dz_2) \notag
\\
&\qquad \times \int_{\kappa(n,s)}^s \frac{\partial b^k(x_r^n)}{\partial x^i}  \tilde{\sigma}^{(i,l)}(x_{\kappa(n,r)}^n)dw_r^l ds \notag
\\
& =  K \sum_{k,i=1}^d  \sum_{l=1}^{m}  \int_{0}^t E\int_{\kappa(n,s)}^s \int_Z \big(\gamma^k(x_r, z_2)-\gamma^k(x_r^n, z_2)\big)\tilde{N}(dr,dz_2) \notag
\\
&\qquad \times \int_{\kappa(n,s)}^s \frac{\partial b^k(x_r^n)}{\partial x^i}  \tilde{\sigma}^{(i,l)}(x_{\kappa(n,r)}^n)dw_r^l ds \notag
\\
& + K \sum_{k,i=1}^d  \sum_{l=1}^{m}  \int_{0}^tE \int_{\kappa(n,s)}^s \int_Z \big(\gamma^k(x_r^n, z_2)-\tilde{\gamma}^{k}(x_{\kappa(n,r)}^n, z_2)\big)\tilde{N}(dr,dz_2)\notag
 \\
&\qquad \times\int_{\kappa(n,s)}^s \frac{\partial b^k(x_r^n)}{\partial x^i}  \tilde{\sigma}^{(i,l)}(x_{\kappa(n,r)}^n)dw_r^l ds \notag
\end{align}
which due to H\"older's inequality gives,
\begin{align}
F_{34}\leq & K \sum_{k,i=1}^d  \sum_{l=1}^{m}  \int_{0}^t \Big(E\Big|\int_{\kappa(n,s)}^s \int_Z \big(\gamma^k(x_r, z_2)-\gamma^k(x_r^n, z_2)\big)\tilde{N}(dr,dz_2)\Big|^2 \notag
\\
& \qquad \times E\Big| \int_{\kappa(n,s)}^s \frac{\partial b^k(x_r^n)}{\partial x^i}  \tilde{\sigma}^{(i,l)}(x_{\kappa(n,r)}^n)dw_r^l\Big|^2\Big)^\frac{1}{2} ds \notag
\\
& + K \sum_{k,i=1}^d  \sum_{l=1}^{m}  \int_{0}^t \Big(E\Big|\int_{\kappa(n,s)}^s \int_Z \big(\gamma^k(x_r^n, z_2)-\tilde{\gamma}^{k}(x_{\kappa(n,r)}^n, z_2)\big)\tilde{N}(dr,dz_2)\Big|^2\notag
\\
& \qquad \times E\Big| \int_{\kappa(n,s)}^s \frac{\partial b^k(x_r^n)}{\partial x^i}  \tilde{\sigma}^{(i,l)}(x_{\kappa(n,r)}^n)dw_r^l\Big|^2\Big)^\frac{1}{2} ds. \notag
\end{align}
Further, the application of Assumption A-2,  Remark \ref{as:sde:poly:Der}, Lemmas [\ref{lem:BGD:jump}, \ref{c-tilde c:rate}] and Corollary  \ref{lem:tildeb:momentbound} gives
\begin{align}
F_{34}\leq & K n^{-\frac{1}{2}} \int_{0}^t \Big(E\int_{\kappa(n,s)}^s \int_Z |\gamma(x_r, z_2)-\gamma(x_r^n, z_2)|^2\nu(dz_2)dr \Big)^\frac{1}{2}  ds \notag
\\
& + K n^{-\frac{1}{2}}  \int_{0}^t \Big(E\int_{\kappa(n,s)}^s \int_Z |\gamma(x_r^n, z_2)-\tilde{\gamma}(x_{\kappa(n,r)}^n, z_2)|^2\nu(dz_2)dr\Big)^\frac{1}{2} ds \notag
\\
\leq & K  n^{-1} \int_{0}^t \Big( \sup_{0 \leq r \leq s}E|e_r^n|^2  \Big)^\frac{1}{2}  ds  + K n^{-2}. \notag
\end{align}
Thus,  on the application of Young's inequality, one obtains
\begin{align}
F_{34} \leq  K  \int_{0}^t \sup_{0 \leq r \leq s}E|e_r^n|^2   ds  + K n^{-2}. \label{eq:F34}
\end{align}
Thus substituting the estimates from \eqref{eq:F31}, \eqref{eq:F32}, \eqref{eq:F33} and \eqref{eq:F34} in \eqref{eq:F31+.+E34}, one obtains
\begin{align}
F_3 \leq  K  \int_{0}^t \sup_{0 \leq r \leq s}E|e_r^n|^2   ds +  K n^{-\frac{2}{2+\delta}-1}. \label{eq:F3}
\end{align}
By adopting the same approach as followed in the estimation of $F_3$, one could estimate $F_4$ and $F_5$ as
\begin{align}
F_4 & \leq  K  \int_{0}^t \sup_{0 \leq r \leq s}E|e_r^n|^2   ds +  K n^{-\frac{2}{2+\delta}-1}, \label{eq:F4}
\\
F_5 & \leq  K  \int_{0}^t \sup_{0 \leq r \leq s}E|e_r^n|^2   ds +  K n^{-\frac{2}{2+\delta}-1}. \label{eq:F5}
\end{align}
Finally, combining estimates from \eqref{eq:F1}, \eqref{eq:F2}, \eqref{eq:F3}, \eqref{eq:F4} and \eqref{eq:F5} in equation \eqref{eq:F1+.+F5} completes the proof.
\end{proof}
\begin{proof}[\textbf{Proof of Theorem \ref{thm:main:thm}}]
By It\^{o}'s formula,
\begin{align*}
|e_t^n|^2& = 2 \int_{0}^{t} e_s^n(b(x_s)-\tilde{b}^n(x_{\kappa(n,s)}^n)) ds + 2 \int_{0}^{t} e_s^n(\sigma(x_s)-\tilde{\sigma}(x_{\kappa(n,s)}^n)) dw_s
\\
&  +   \int_{0}^{t} |\sigma(x_s)-\tilde{\sigma}(x_{\kappa(n,s)}^n)|^2 ds+ 2 \int_{0}^{t} \int_Z e_s^n  (\gamma(x_s,z_2)-\tilde \gamma(x_{\kappa(n,s)}^n,z_2))\tilde N(ds, dz_2)
\\
& +  \int_{0}^{t} \int_Z \big(  |e_s^n+\gamma(x_s,z_2)-\tilde \gamma(x_{\kappa(n,s)}^n,z_2)|^2
\\
& \qquad -|e_s^n|^2-2e_s^n(\gamma(x_s,z_2)-\tilde \gamma(x_{\kappa(n,s)}^n,z_2))\big) N(ds, dz_2)
\end{align*}
almost surely for any $t \in [0,T]$,  which on taking expectation implies
\begin{align}
E|e_t^n|^2 = & 2 E\int_{0}^{t} e_s^n\{b(x_s)-\tilde{b}^n(x_{\kappa(n,s)}^n)\} ds   +  E \int_{0}^{t} |\sigma(x_s)-\tilde{\sigma}(x_{\kappa(n,s)}^n)|^2 ds \notag
\\
 & +  E \int_{0}^{t} \int_Z \big(  |e_s^n+\gamma(x_s,z_2)-\tilde \gamma(x_{\kappa(n,s)}^n,z_2)|^2 \notag
 \\
 & \qquad-|e_s^n|^2-2e_s^n\{\gamma(x_s,z_2)-\tilde \gamma(x_{\kappa(n,s)}^n,z_2)\}\big)\nu(dz_2)ds \notag
 \\
 & \qquad \qquad \qquad =:W_1+W_2+W_3 \label{eq:W1+W2+W3}
\end{align}
for any $t \in [0, T]$.

Now, for estimating $W_1$, one uses the following,
\begin{align*}
e_s^n(b(x_s)-\tilde{b}^n(x_{\kappa(n,s)}^n))=&e_s^n( b(x_s)-b(x_s^n))+e_s^n(b(x_s^n)-b(x_{\kappa(n,s)}^n))
\\
& +e_s^n(b(x_{\kappa(n,s)}^n)- \tilde{b}^n(x_{\kappa(n,s)}^n))
\end{align*}
which on using Assumption A-2 and Young's inequality gives
\begin{align} \label{eq:enbn:rate}
e_s^n(b(x_s)-\tilde{b}^n(x_{\kappa(n,s)}^n)) & \leq  K|e_s^n|^2+e_s^n(b(x_s^n)-b(x_{\kappa(n,s)}^n)) \notag
\\
& +K|b(x_{\kappa(n,s)}^n)- \tilde{b}^n(x_{\kappa(n,s)}^n)|^2
\end{align}
for any $s \in [0,T]$. Thus, $W_1$ can be estimated by
\begin{align}
W_1:=&2E\int_0^t e_s^n(b(x_s)-\tilde{b}^n(x_{\kappa(n,s)}^n))ds \notag
\\
\leq & KE\int_0^t |e_s^n|^2ds+2E\int_0^t e_s^n(b(x_s^n)-b(x_{\kappa(n,s)}^n))ds \notag
\\
& +KE\int_0^t |b(x_{\kappa(n,s)}^n)- \tilde{b}^n(x_{\kappa(n,s)}^n)|^2ds  \notag
\end{align}
which on the application of Lemma [\ref{lem:form:rate}, \ref{lem:a-tilde a:rate}] implies
\begin{align}
W_1 \leq K \int_0^t \sup_{0 \leq r \leq s}E|e_r^n|^2ds+ Kn^{-\frac{2}{2+\delta}-1}. \label{eq:W1}
\end{align}
Further, for estimating $W_2$, one writes,
$$
\sigma(x_s)-\tilde{\sigma}(x_{\kappa(n,s)}^n)=\sigma(x_s)-\sigma(x_s^n)+\sigma(x_s^n)-\tilde{\sigma}(x_{\kappa(n,s)}^n)
$$
which on using Assumption A-2 yields,
\begin{align} \label{eq:sigm:split:rate1}
|\sigma(x_s)-\tilde{\sigma}(x_{\kappa(n,s)}^n)|^2\leq K|e_s^n|^2+2|\sigma(x_s^n)-\tilde{\sigma}(x_{\kappa(n,s)}^n)|^2
\end{align}
for any $s \in [0,T]$. Thus, due to Lemma \ref{lem:b-tilde b:rate}, one obtains
\begin{align}
W_2 & := E \int_{0}^{t} |\sigma(x_s)-\tilde{\sigma}(x_{\kappa(n,s)}^n)|^2 ds \leq  K \int_{0}^{t} \sup_{0 \leq r \leq s} E|e_r^n|^2 ds + K n^{-2}. \label{eq:W2}
\end{align}
Finally, for estimating $W_3$, one uses equation \eqref{eq:y1y2} to write
\begin{align*}
W_3&:=E \int_{0}^{t} \int_Z \big(  |e_s^n+\gamma(x_s,z_2)-\tilde \gamma(x_{\kappa(n,s)}^n,z_2)|^2-|e_s^n|^2
\\
& \qquad -2e_s^n(\gamma(x_s,z_2)-\tilde \gamma(x_{\kappa(n,s)}^n,z_2))\big)\nu(dz_2)ds
\\
& \leq K E \int_{0}^{t} \int_Z  |\gamma(x_s,z_2)-\tilde \gamma(x_{\kappa(n,s)}^n,z_2)|^2 \nu(dz_2)ds
\end{align*}
and then applying the following splitting
\begin{align*}
\gamma(x_s,z_2) & -\tilde \gamma(x_{\kappa(n,s)}^n,z_2)=\gamma(x_s,z_2)- \gamma(x_s^n,z_2)
\\
& +\gamma(x_s^n,z_2)-\tilde \gamma(x_{\kappa(n,s)}^n,z_2),
\end{align*}
one could write
\begin{align*}
W_3& \leq K E \int_{0}^{t} \int_Z  |\gamma(x_s,z_2)-\gamma(x_s^n,z_2)|^2  \nu(dz_2)ds
\\
& + K E \int_{0}^{t} \int_Z |\gamma(x_s^n,z_2)-\tilde \gamma(x_{\kappa(n,s)}^n,z_2)|^2 \nu(dz_2)ds
\end{align*}
which due to Assumption A-2 and Lemma \ref{c-tilde c:rate} yields
\begin{align}
W_3& \leq  K \int_{0}^{t} \sup_{0 \leq r \leq s}E|e_r^n|^2 ds  +  K n^{-2}. \label{eq:W3}
\end{align}
Thus by substituting estimates from \eqref{eq:W1}, \eqref{eq:W2} and \eqref{eq:W3} in \eqref{eq:W1+W2+W3}, one obtains,
\begin{align}
\sup_{0 \leq t \leq u}E|e_t^n|^2 \leq  K \int_{0}^{u} \sup_{0 \leq r \leq s}E|e_r^n|^2 ds  +  K n^{-\frac{2}{2+\delta}-1} < \infty \notag
\end{align}
for any $u \in [0,T]$. The Gronwall's inequality completes the proof.
\end{proof}
\section{Continuous Case} \label{sec:con}
In this section, the case when $\nu \equiv 0$ or $\gamma\equiv0$ in SDE \eqref{eq:sde} is discussed, i.e. one considers the following SDE,
\begin{align} \label{eq:sde:continuous}
x_t=& \xi + \int_{0}^t b(x_s)ds+\int_{0}^t \sigma(x_s)dw_s
\end{align}
almost surely for any $t \in [0,T] $, where $\xi$ is an $\mathscr{F}_{0}$-measurable  random variable in $\mathbb{R}^d$. Further, the scheme \eqref{eq:milstein} is replaced by the following,
\begin{align} \label{eq:milstein:continuous}
x_t^n=& \xi + \int_{0}^t \tilde b^n(x_{\kappa(n,s)}^n)ds+\int_{0}^t \tilde \sigma(x_{\kappa(n,s)}^n)dw_s,
\end{align}
almost surely for any $t \in [0,T]$. The drift coefficient $\tilde b^n$ in scheme \eqref{eq:milstein:continuous} is given by equation \eqref{eq:an:tilde} and the diffusion coefficient $\tilde \sigma$ is defined as
\begin{align} \label{eq:tilde:sigma}
\tilde \sigma(x_{\kappa(n,s)}^n):=\sigma(x_{\kappa(n,s)}^n)+\sigma_1(x_{\kappa(n,s)}^n)
\end{align}
where $\sigma_1$ is given by \eqref{eq:sigma:1}. Also, Assumptions A-2 to A-4 hold since  $\gamma\equiv0$. This implies that Lemma \ref{lem:sde:momentbound} holds under Assumptions A-1 and  A-2 with $b(x)$ being a continuous function in $x \in \mathbb{R}^d$. Further, Lemma \ref{lem:b1:rate} and Corollary \ref{lem:tildeb:momentbound} hold under Assumptions A-2 and A-3  while Lemma \ref{lem:scheme:momentbound} holds under Assumptions A-1 to A-3. One also notes that Lemma \ref{lem:b2:rate} to Lemma \ref{lem:c3:rate} and Corollary \ref{lem:tilde c:momentbound} are not required for this section since $\gamma\equiv0$.

We now proceed for the derivation of the rate of convergence of scheme \eqref{eq:milstein:continuous} and prove that this is same as that obtained by  \cite{wanga2013}. One can notice in the following calculations that a rate of convergence is achieved for any  $ q  \leq p/(3\chi+6)$ when either $p/(3\chi+6)\geq 4$ or $p/(3\chi+6)=2$.

\begin{lem} \label{lem:one-step:rate:new}
Consider Remark \ref{as:scheme:a^n} and  let Assumptions A-1 to A-3 (with $\gamma\equiv0$) hold, then
$$
\sup_{0 \leq t \leq T}E|x_t^n-x_{\kappa(n,t)}^n|^q \leq K n^{-\frac{q}{2}}.
$$
\end{lem}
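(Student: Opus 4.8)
The plan is to read off the one-step increment directly from the defining equation \eqref{eq:milstein:continuous}, which in the purely continuous case is
\begin{align*}
x_t^n - x_{\kappa(n,t)}^n = \int_{\kappa(n,t)}^t \tilde{b}^n(x_{\kappa(n,r)}^n)\,dr + \int_{\kappa(n,t)}^t \tilde{\sigma}(x_{\kappa(n,r)}^n)\,dw_r,
\end{align*}
and then to split the estimate via the elementary inequality $|a+b|^q \leq K(|a|^q+|b|^q)$ into a drift contribution and a diffusion contribution, treating each separately. The moment bound of Lemma \ref{lem:scheme:momentbound} (which in this section holds under Assumptions A-1 to A-3) and Corollary \ref{lem:tildeb:momentbound} (holding under A-2, A-3) will supply the uniform-in-$n$ control needed to absorb all coefficient terms.

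For the drift term I would exploit the uniform bound $|\tilde{b}^n(x)| \leq n^{1/2}$ of Remark \ref{as:scheme:a^n}. Since $|t-\kappa(n,t)| \leq n^{-1}$, the drift integral is bounded \emph{pathwise} by $n^{-1}\cdot n^{1/2}=n^{-1/2}$, so that $E\big|\int_{\kappa(n,t)}^t \tilde{b}^n(x_{\kappa(n,r)}^n)\,dr\big|^q \leq n^{-q/2}$ for every $q>0$, with no appeal to moment bounds whatsoever. This term therefore already respects the desired rate uniformly in $t$ and $n$.

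For the diffusion term I would first treat the case $q \geq 2$. An elementary inequality for stochastic integrals together with H\"older's inequality on $[\kappa(n,t),t]$ (exactly as in the proof of Lemma \ref{lem:b1:rate}) gives
\begin{align*}
E\Big|\int_{\kappa(n,t)}^t \tilde{\sigma}(x_{\kappa(n,r)}^n)\,dw_r\Big|^q \leq K |t-\kappa(n,t)|^{q/2-1}\int_{\kappa(n,t)}^t E|\tilde{\sigma}(x_{\kappa(n,r)}^n)|^q\,dr,
\end{align*}
and Corollary \ref{lem:tildeb:momentbound} bounds the integrand by $K(1+E|x_{\kappa(n,r)}^n|^q)$, which is uniformly finite by Lemma \ref{lem:scheme:momentbound}. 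Combining the factor $|t-\kappa(n,t)|^{q/2-1}\leq n^{-(q/2-1)}$ with the interval length $n^{-1}$ yields precisely $Kn^{-q/2}$. For the complementary range $0<q<2$ I would not repeat the stochastic estimate, since the H\"older step above requires $q\geq 2$; instead Jensen's inequality $E|X|^q \leq (E|X|^2)^{q/2}$ reduces the diffusion contribution to the $q=2$ case, which gives $E|\cdots|^2 \leq Kn^{-1}$, and hence $(Kn^{-1})^{q/2}=Kn^{-q/2}$.

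Collecting the drift and diffusion estimates and taking the supremum over $t\in[0,T]$ completes the argument. The only delicate point is the threshold at $q=2$: the gain of the factor $|t-\kappa(n,t)|^{q/2-1}$ is only admissible for $q\geq 2$, so the sub-quadratic range must be handled by the Jensen reduction rather than by a direct stochastic-integral estimate; everything else is a routine consequence of the uniform drift bound of Remark \ref{as:scheme:a^n} together with the moment estimates already established.
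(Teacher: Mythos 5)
Your proof is correct and follows essentially the same route as the paper: the paper's proof of Lemma \ref{lem:one-step:rate:new} simply cites Lemmas [\ref{lem:one-step:norate}, \ref{lem:scheme:momentbound}], i.e., it reruns the one-step drift/diffusion computation of Lemma \ref{lem:one-step:norate} with $\gamma\equiv 0$ (where the jump terms responsible for the slower $n^{-1}$ rate vanish) and absorbs the coefficient moments via Lemma \ref{lem:scheme:momentbound} and Corollary \ref{lem:tildeb:momentbound}, which is exactly the splitting you carry out. Your explicit Jensen reduction for the range $0<q<2$ is a welcome bit of care that the paper's one-line citation leaves implicit.
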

\begin{proof}
This follows due to Lemmas [\ref{lem:one-step:norate}, \ref{lem:scheme:momentbound}].
\end{proof}

\begin{lem} \label{lem:form:rate:new}
Consider equation \eqref{eq:an:tilde} and let Assumptions A-1 to A-3 (with $\gamma\equiv0$) be satisfied, then
$$
\sup_{0 \leq t \leq T}E |b(x_t^n)-\tilde{b}^n(x_t^n)|^q dt  \leq K n^{-q}.
$$
\end{lem}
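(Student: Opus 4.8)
The plan is to follow the same elementary route as in Lemma \ref{lem:form:rate}, simply carrying the general exponent $q$ in place of $2$ and checking that the moment bound of Lemma \ref{lem:scheme:momentbound} still absorbs the resulting power of $x_t^n$. First I would extract an exact algebraic identity for the taming error directly from the definition \eqref{eq:an:tilde} (recall $\theta=1$):
$$
b(x)-\tilde b^n(x)=b(x)\Big(1-\frac{1}{1+n^{-1}|b(x)|^2}\Big)=\frac{n^{-1}|b(x)|^2\,b(x)}{1+n^{-1}|b(x)|^2}=n^{-1}|b(x)|^2\,\tilde b^n(x),
$$
so that $|b(x)-\tilde b^n(x)|=n^{-1}|b(x)|^2|\tilde b^n(x)|$ for every $x\in\mathbb R^d$ and $n\in\mathbb N$.

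Next I would invoke Remark \ref{as:scheme:a^n}, which gives $|\tilde b^n(x)|\le|b(x)|$, to reach the clean pointwise bound $|b(x)-\tilde b^n(x)|\le n^{-1}|b(x)|^3$, and hence $|b(x)-\tilde b^n(x)|^q\le n^{-q}|b(x)|^{3q}$. Evaluating this at $x=x_t^n$, using the polynomial growth $|b(x)|\le L(1+|x|^{\chi+2})$ from Remark \ref{as:sde:poly}, and taking expectations yields
$$
E|b(x_t^n)-\tilde b^n(x_t^n)|^q\le K n^{-q}\big(1+E|x_t^n|^{3q(\chi+2)}\big).
$$
Taking the supremum over $t\in[0,T]$ and appealing to the uniform moment bound of Lemma \ref{lem:scheme:momentbound} then closes the estimate, provided the moment order $3q(\chi+2)$ does not exceed $p$.

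The only point requiring attention is this exponent bookkeeping: the final step needs $3q(\chi+2)\le p$, i.e. $q\le p/(3\chi+6)$, which is exactly the admissible range flagged in the discussion preceding this lemma; since $p\ge 6(\chi+2)$, it holds automatically for all $q\le 2$. There is no genuine analytic obstacle, as the taming construction makes the error explicitly of order $n^{-1}$ per factor, so the whole difficulty is confined to verifying that Lemma \ref{lem:scheme:momentbound} covers the order $3q(\chi+2)$ produced by cubing $b$. The one pitfall to avoid is using $|\tilde b^n(x)|\le n^{1/2}$ in place of $|\tilde b^n(x)|\le|b(x)|$: that substitution would only deliver the weaker rate $n^{-q/2}$, so the bound $|\tilde b^n(x)|\le|b(x)|$ must be the one carried through.
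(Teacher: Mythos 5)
Your proposal is correct and takes essentially the same route as the paper, whose one-line proof simply cites equation \eqref{eq:an:tilde}, the growth of $b$, and the uniform moment bound of Lemma \ref{lem:scheme:momentbound}; your explicit identity $b(x)-\tilde b^n(x)=n^{-1}|b(x)|^2\,\tilde b^n(x)$, the use of $|\tilde b^n(x)|\le |b(x)|$ from Remark \ref{as:scheme:a^n}, and the exponent bookkeeping $3q(\chi+2)\le p$ (i.e.\ $q\le p/(3\chi+6)$, exactly the range flagged before the lemma) make precise what the paper leaves implicit. One small point in your favour: the paper's proof cites Remark \ref{as:sde:growth}, which only bounds $xb(x)$ and cannot control $|b(x)|^{3}$, so the polynomial bound of Remark \ref{as:sde:poly} that you invoke (as does the parallel Lemma \ref{lem:form:rate}) is the ingredient actually needed.
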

\begin{proof}
This is an immediate consequence of equation \eqref{eq:an:tilde}, Remark \ref{as:sde:growth} and Lemma \ref{lem:scheme:momentbound}.
\end{proof}

\begin{lem} \label{lem:b-tilde b:rate:new}
Let Assumptions A-1 to A-3 (with $\gamma\equiv0$) hold, then
$$
\sup_{0 \leq t \leq T} E|\sigma(x_{t}^n) - \tilde{\sigma}(x_{\kappa(n,t)}^n)|^q \leq K n^{-q}.
$$
\end{lem}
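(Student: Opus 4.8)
The plan is to follow the scheme of Lemma \ref{c-tilde c:rate}, specialised to the continuous case in which $\tilde{\sigma}(x_{\kappa(n,t)}^n)=\sigma(x_{\kappa(n,t)}^n)+\sigma_1(x_{\kappa(n,t)}^n)$. Since $\mathcal L^q$ bounds for $0<q<2$ follow from the case $q\geq 2$ by Jensen's inequality, I would assume $q\geq 2$ throughout. First I would apply It\^o's formula to each component $\sigma^{(i,k)}(x_t^n)$ on the interval $[\kappa(n,t),t]$, using that $x^n$ solves $dx_r^n=\tilde b^n(x_{\kappa(n,r)}^n)\,dr+\tilde{\sigma}(x_{\kappa(n,r)}^n)\,dw_r$. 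This produces the zeroth-order term $\sigma^{(i,k)}(x_{\kappa(n,t)}^n)$, a first-order drift term with integrand $\sum_u\partial_u\sigma^{(i,k)}(x_r^n)\tilde b^{n,u}(x_{\kappa(n,r)}^n)$, a second-order drift term involving $\partial^2\sigma^{(i,k)}(x_r^n)\tilde{\sigma}^{(u_1,l)}\tilde{\sigma}^{(u_2,l)}$, and a stochastic term $\int_{\kappa(n,t)}^t\sum_{u,j}\partial_u\sigma^{(i,k)}(x_r^n)\tilde{\sigma}^{(u,j)}(x_{\kappa(n,r)}^n)\,dw_r^j$.

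The zeroth-order term cancels the leading part of $\tilde{\sigma}(x_{\kappa(n,t)}^n)$. Writing $\tilde{\sigma}^{(u,j)}=\sigma^{(u,j)}+\sigma_1^{(u,j)}$ in the stochastic term and subtracting $\sigma_1^{(i,k)}(x_{\kappa(n,t)}^n)$, I would split it as
\[
\int_{\kappa(n,t)}^t\sum_{u,j}\Big(\partial_u\sigma^{(i,k)}(x_r^n)-\partial_u\sigma^{(i,k)}(x_{\kappa(n,r)}^n)\Big)\sigma^{(u,j)}(x_{\kappa(n,r)}^n)\,dw_r^j+\int_{\kappa(n,t)}^t\sum_{u,j}\partial_u\sigma^{(i,k)}(x_r^n)\,\sigma_1^{(u,j)}(x_{\kappa(n,r)}^n)\,dw_r^j,
\]
so that $\sigma(x_t^n)-\tilde{\sigma}(x_{\kappa(n,t)}^n)$ reduces to four contributions, each to be estimated in $\mathcal L^q$.

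For the bounds I would argue as follows. The first-order drift term is handled by H\"older in time (the interval has length $\le n^{-1}$) together with the crucial polynomial estimate $|\tilde b^n(x_{\kappa(n,r)}^n)|\le|b(x_{\kappa(n,r)}^n)|\le L(1+|x_{\kappa(n,r)}^n|^{\chi+2})$ coming from Remarks \ref{as:scheme:a^n} and \ref{as:sde:poly}, the boundedness of $\partial_u\sigma$ from Remark \ref{rem:der:bounded}, and the moment bound of Lemma \ref{lem:scheme:momentbound}; this gives $Kn^{-q}$ once $q(\chi+2)\le p$. The second-order drift term uses Remark \ref{rem:der:bounded} together with Corollary \ref{lem:tildeb:momentbound} and is $Kn^{-q}$ once $2q\le p$. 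For the two stochastic terms I would apply the Burkholder--Davis--Gundy (elementary stochastic) inequality: the derivative-difference term is controlled via Assumption A-3 and the one-step estimate $\sup_t E|x_t^n-x_{\kappa(n,t)}^n|^{q'}\le Kn^{-q'/2}$ of Lemma \ref{lem:one-step:rate:new}, together with the linear growth of $\sigma$ (Remark \ref{as:sde:growth}) and H\"older, so that the interval factor $n^{-(q/2-1)}$, the extra time integration $n^{-1}$, and the one-step smallness $n^{-q/2}$ multiply to $n^{-q}$; the remaining term is bounded using Lemma \ref{lem:b1:rate} in the form $E|\sigma_1(x_{\kappa(n,r)}^n)|^{q}\le Kn^{-q/2}(1+E|x_{\kappa(n,r)}^n|^{q})$, which combines with the same time factors to give $Kn^{-q}$.

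I expect the main obstacle to be the precise bookkeeping of the powers of $n$ in the two stochastic terms: in each case one must verify that the H\"older interval factor, the additional time integration, and the intrinsic smallness (either $n^{-q/2}$ from the one-step displacement estimate, or $n^{-q/2}$ from the $\sigma_1$ moment bound) multiply to exactly $n^{-q}$, and that every moment invoked is finite. The latter is where the standing hypotheses are used, since $q(3\chi+6)\le p$ comfortably guarantees both $q(\chi+2)\le p$ and $2q\le p$. Summing over $i,k$, taking the supremum over $t\in[0,T]$, and collecting the four estimates would then complete the proof.
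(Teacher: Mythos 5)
Your proposal is correct and follows essentially the same route as the paper: It\^o's formula applied to $\sigma^{(k,v)}(x_t^n)$ on $[\kappa(n,t),t]$, cancellation of the zeroth-order term against $\sigma(x_{\kappa(n,t)}^n)$, the split $\tilde{\sigma}=\sigma+\sigma_1$ in the stochastic integrand so that subtracting $\sigma_1(x_{\kappa(n,t)}^n)$ leaves exactly the four contributions you list, and the same supporting results (Remarks \ref{as:sde:growth}, \ref{rem:der:bounded}, \ref{as:scheme:a^n}, \ref{as:sde:poly}, Assumption A-3, Lemmas \ref{lem:b1:rate}, \ref{lem:one-step:rate:new} and Corollary \ref{lem:tildeb:momentbound}). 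Your bookkeeping of the powers of $n$ in the two stochastic terms ($n^{-(q/2-1)}\cdot n^{-1}\cdot n^{-q/2}=n^{-q}$) and the moment requirements ($q(\chi+2)\le p$, $2q\le p$) match what the paper's condensed proof relies on.
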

\begin{proof}
One uses It\^o's formula to estimate the difference $\sigma^{(k,v)}(x_t^n) - \sigma^{(k,v)}(x_{\kappa(n,t)}^n)$ and \eqref{eq:tilde:sigma} to obtain the following,
\begin{align*}
&E|\sigma(x_t^n) - \tilde{\sigma}(x_{\kappa(n,t)}^n)|^q=E \Big(\sum_{k=1}^{d}\sum_{v=1}^{m}|\sigma^{(k,v)}(x_t^n) - \tilde{\sigma}^{(k,v)}(x_{\kappa(n,t)}^n)|^2\Big)^\frac{q}{2} \notag
\\
&\leq  K  \sum_{u,k=1}^d \sum_{v=1}^{m}E\Big|\int_{\kappa(n,t)}^t \frac{\partial \sigma^{(k,v)}(x_r^n)}{\partial x^u} \tilde{b}^{n,u}(x_{\kappa(n,r)}^n)dr \Big|^q \notag
\\
&+K \sum_{u,k=1}^d \sum_{v=1}^{m} \sum_{u_1,u_2=1}^{d} \sum_{l_1=1}^{m} E\Big|\int_{\kappa(n,t)}^t \frac{\partial^2 \sigma^{(k,v)}(x_r^n)}{\partial x^{u_1} \partial x^{u_2}}  \tilde{\sigma}^{(u_1,l_1)}(x_{\kappa(n,r)}^n)\tilde{\sigma}^{(u_2,l_1)}(x_{\kappa(n,r)}^n)dr \Big|^q \notag
\end{align*}
\begin{align}
&+K  \sum_{u,k=1}^d \sum_{v=1}^{m} \sum_{u=1}^d \sum_{j=1}^m E\Big|\int_{\kappa(n,t)}^t  \sigma^{(u,j)}(x_{\kappa(n,r)}^n)\Big\{\frac{\partial \sigma^{(k,v)}(x_r^n)}{\partial x^u} -\frac{\partial \sigma^{(k,v)} (x_{\kappa(n,r)}^n)}{\partial x^u}\Big\}dw_r^j\Big|^q \notag
\\
& +K  \sum_{u,k=1}^d \sum_{v=1}^{m}\sum_{u=1}^d \sum_{j=1}^m E\Big|\int_{\kappa(n,t)}^t \frac{\partial \sigma^{(k,v)}(x_r^n)}{\partial x^u}  \sigma_1^{(u,j)}(x_{\kappa(n,r)}^n)dw_r^j\Big|^q \notag
\end{align}
and the application of Remarks [\ref{as:sde:growth} ,\ref{rem:der:bounded}, \ref{as:scheme:a^n}, \ref{as:sde:poly}],  Assumption A-3, Lemmas [\ref{lem:b1:rate}, \ref{lem:one-step:rate:new}] and Corollary \ref{lem:tildeb:momentbound}   completes the proof.
\end{proof}

\begin{lem} \label{lem:a-tilde a:rate:new}
Let Assumptions A-1 to
A-3 (with $\gamma\equiv0$)
 and A-5 hold, then
\begin{equation*}
E\int_{0}^{t} |e_s^n|^{q-2}e_s^n(b(x_s^n)-b(x_{\kappa(n,s)}^n))ds\leq K  \int_{0}^t \sup_{0 \leq r \leq s}E|e_r^n|^q   ds + K n^{-q}
\end{equation*}
for any $t \in [0,T]$.
\end{lem}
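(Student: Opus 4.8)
The plan is to follow the architecture of Lemma~\ref{lem:a-tilde a:rate}, specialised to the continuous scheme \eqref{eq:milstein:continuous}, where $\gamma\equiv0$ and $\tilde\sigma=\sigma+\sigma_1$. First I would apply It\^o's formula to $b^k(x_s^n)-b^k(x_{\kappa(n,s)}^n)$, which over $[\kappa(n,s),s]$ yields a drift integral against $\frac{\partial b^k}{\partial x^i}\tilde b^{n,i}$, a second-order integral against $\frac{\partial^2 b^k}{\partial x^i\partial x^j}\tilde\sigma\tilde\sigma$, and a diffusion stochastic integral against $\frac{\partial b^k}{\partial x^i}\tilde\sigma^{(i,l)}$; no jump terms survive since $\gamma\equiv0$. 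Multiplying by $|e_s^n|^{q-2}e_s^{n,k}$, summing over $k$ and taking expectation splits the left-hand side as $F_1+F_2+F_3$, exactly the decomposition of Lemma~\ref{lem:a-tilde a:rate} but carrying the extra weight $|e_s^n|^{q-2}$ and lacking the jump contributions $F_4,F_5$.

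For $F_1$ and $F_2$ I would use $|e_s^n|^{q-2}e_s^{n,k}\le|e_s^n|^{q-1}$ and then Young's inequality with conjugate exponents $q/(q-1)$ and $q$ to peel off $K\int_0^t\sup_{0\le r\le s}E|e_r^n|^q\,ds$, followed by H\"older's inequality on the inner integral of length $\le n^{-1}$. Combining the growth bounds $|\tilde b^n(x)|\le|b(x)|\le L(1+|x|^{\chi+2})$ (Remarks~\ref{as:scheme:a^n},~\ref{as:sde:poly}) and $|\partial^2 b|\le L(1+|x|^\chi)$ (Remark~\ref{as:sde:poly:Der}) with Lemma~\ref{lem:scheme:momentbound} and Corollary~\ref{lem:tildeb:momentbound}, the double time integration contributes $n^{-(q-1)}\cdot n^{-1}=n^{-q}$; the budget $q\le p/(3\chi+6)-2$ is precisely what keeps every polynomial moment finite after the H\"older split.

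The main obstacle is $F_3$. Writing $e_s^{n,k}=e_{\kappa(n,s)}^{n,k}+\int_{\kappa(n,s)}^s(b(x_r)-\tilde b^n(x_{\kappa(n,r)}^n))^k\,dr+\int_{\kappa(n,s)}^s(\sigma(x_r)-\tilde\sigma(x_{\kappa(n,r)}^n))^{(k,\cdot)}dw_r$ splits $F_3=F_{31}+F_{32}+F_{33}$. For the increment pieces $F_{32},F_{33}$ I would apply H\"older with the three exponents $q/(q-2),q,q$ (so that $\tfrac{q-2}{q}+\tfrac1q+\tfrac1q=1$) to the product of $|e_s^n|^{q-2}$, the increment, and the stochastic integral $\int_{\kappa(n,s)}^s\frac{\partial b}{\partial x}\tilde\sigma\,dw$. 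The decisive point is that each increment integrand is of size $|e_r^n|+n^{-1}$, via the splittings $b(x_r)-\tilde b^n=(b(x_r)-b(x_r^n))+(b(x_r^n)-b(x_{\kappa(n,r)}^n))+(b(x_{\kappa(n,r)}^n)-\tilde b^n(x_{\kappa(n,r)}^n))$ and its analogue for $\sigma$, controlled by Remark~\ref{as:sde:poly}, Assumption~A-2, Lemma~\ref{lem:one-step:rate:new}, Lemma~\ref{lem:form:rate:new} and Lemma~\ref{lem:b-tilde b:rate:new}. This is where the Milstein correction pays off: because $\tilde\sigma=\sigma+\sigma_1$ matches $\sigma(x^n)$ to order $n^{-1}$ (Lemma~\ref{lem:b-tilde b:rate:new}), the $\mathcal L^q$ size of the $e^n$-increment is $O\!\big(n^{-1/2}((\sup E|e|^q)^{1/q}+n^{-1})\big)$; pairing it with the $O(n^{-1/2})$ stochastic integral and applying Young once more produces $K\int_0^t\sup E|e_r^n|^q\,ds+Kn^{-q}$, the full rate rather than the $n^{-q/2}$ a crude estimate would give. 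It is exactly the absence of the jump corrections $\sigma_2,\sigma_3$ that removes the bottleneck limiting the L\'evy case to $n^{-2/(2+\delta)-1}$.

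The genuinely delicate piece is $F_{31}$, the frozen term, where a direct bound again loses a full $n^{-q/2}$ because neither $|e_s^n|^{q-2}$ nor $e_{\kappa(n,s)}^{n,k}$ is small. Here I would additionally freeze the weight, $|e_s^n|^{q-2}=|e_{\kappa(n,s)}^n|^{q-2}+(|e_s^n|^{q-2}-|e_{\kappa(n,s)}^n|^{q-2})$: the fully frozen contribution $|e_{\kappa(n,s)}^n|^{q-2}e_{\kappa(n,s)}^{n,k}\int_{\kappa(n,s)}^s\frac{\partial b}{\partial x}\tilde\sigma\,dw$ vanishes in expectation, its prefactor being $\mathscr F_{\kappa(n,s)}$-measurable and the integral a martingale increment of zero conditional mean, while the weight-increment must be handled through the quadratic covariation of the $e^n$-increment with $\int_{\kappa(n,s)}^s\frac{\partial b}{\partial x}\tilde\sigma\,dw$ (two martingale increments over the same interval), which is of order $n^{-1}$ and, after Young, again delivers $n^{-q}$. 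The hard points to get right will therefore be the non-smoothness of $y\mapsto|y|^{q-2}$ at the origin (only H\"older, not Lipschitz, for $2<q<3$), which is what forces the quadratic-covariation argument rather than a bare modulus-of-continuity bound, together with the bookkeeping that keeps every exponent inside the admissible range $q\le p/(3\chi+6)-2$; for $0<q<2$ the weight is singular and one should instead deduce the statement from the $q\ge2$ case through the Lenglart-type estimate of Lemma~\ref{lem:yor}.
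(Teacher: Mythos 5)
Your proposal is, in substance, the paper's own proof reorganized. The paper makes exactly your first move: It\^o's formula on $b^k(x_s^n)-b^k(x_{\kappa(n,s)}^n)$ gives a three-term decomposition $T_1+T_2+T_3$ (your $F_1,F_2,F_3$, with no jump terms), and $T_1,T_2$ are dispatched by Schwarz/Young/H\"older plus Remarks~\ref{as:scheme:a^n}, \ref{as:sde:poly:Der}, \ref{as:sde:poly} and Lemma~\ref{lem:scheme:momentbound}, as you describe. For the diffusion term the paper does not split $e_s^{n,k}$ and then the weight in two stages; it applies It\^o's formula once to the whole weighted error $|e_s^n|^{q-2}e_s^{n,k}$ over $[\kappa(n,s),s]$ (equation \eqref{eq:ito:new1}), producing a frozen term $T_{31}$ --- killed, exactly as in your argument, by pairing an $\mathscr{F}_{\kappa(n,s)}$-measurable prefactor with a martingale increment of zero conditional mean (the paper uses this only for the $\sigma$-part of $\tilde\sigma$ and estimates the $\sigma_1$-part via Lemma~\ref{lem:b1:rate}, though your observation that the full frozen term vanishes is also correct) --- plus increment terms $T_{32},T_{33}$ and weight-derivative terms $T_{34},T_{35}$, the latter being precisely your ``quadratic covariation of the $e^n$-increment with $\int\partial b\,\tilde\sigma\,dw$'' converted to a $dr$-integral. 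The supporting lemmas you cite (Lemmas~\ref{lem:one-step:rate:new}, \ref{lem:form:rate:new}, \ref{lem:b-tilde b:rate:new}, Corollary~\ref{lem:tildeb:momentbound}) are the ones the paper uses, and your observation that the absence of $\sigma_2,\sigma_3$ is what upgrades the rate from $n^{-2/(2+\delta)-1}$ to $n^{-q}$ is accurate.

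Two caveats. First, the range of $q$: the paper proves the lemma only for $q\geq4$, explicitly referring $q=2$ back to Lemma~\ref{lem:a-tilde a:rate} with $\gamma\equiv0$ (see the sentence preceding Lemma~\ref{lem:one-step:rate:new}). Your proposed covariation treatment of the weight increment for $2<q<3$ does not rescue that range: any It\^o/covariation expansion of $y\mapsto|y|^{q-2}$ (or of $|y|^{q-2}y^k$, as in \eqref{eq:ito:new1}) requires the map to be $C^2$, i.e.\ $q\geq4$, so the same non-smoothness that blocks the modulus bound blocks the covariation route. Conversely, for $q\geq4$ the covariation machinery is unnecessary for the frozen piece: the local Lipschitz bound $\bigl||y|^{q-2}-|z|^{q-2}\bigr|\leq K(|y|^{q-3}+|z|^{q-3})|y-z|$, combined with $\|e_s^n-e_{\kappa(n,s)}^n\|_{\mathcal{L}^q}=O(n^{-1/2}\epsilon_s+n^{-3/2})$ where $\epsilon_s:=(\sup_{0\le r\le s}E|e_r^n|^q)^{1/q}$, and the $O(n^{-1/2})$ stochastic integral already yields terms of order $\epsilon_s^{q-1}n^{-1}+\epsilon_s^{q-2}n^{-2}$, hence $K\int_0^t\sup_{0\le r\le s}E|e_r^n|^q\,ds+Kn^{-q}$ after Young. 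Second, an ordering issue in $F_{32}$: if you apply H\"older with exponents $(q/(q-2),q,q)$ first, the middle factor contains $\|(1+|x_r|^\chi+|x_r^n|^\chi)|e_r^n|\|_{\mathcal{L}^q}$, and any further H\"older split pushes $e^n$ above the $\mathcal{L}^q$ scale controlled by the Gronwall quantity. The paper avoids this trap in $T_{32}$ (and $T_{33}$) by a weighted pointwise Young inequality, writing $(n^{1/q}|e_r^n|)^{q-1}(1+|x_r|^\chi+|x_r^n|^\chi)\,n^{-(q-1)/q}|\cdot|\leq n|e_r^n|^q+n^{-q+1}(\cdots)$, so that the error always appears at power exactly $q$ and all polynomial factors are dumped into the $n^{-q}$ bucket, where only bounded moments of $x$ and $x^n$ are needed --- this is also exactly where the budget $q\le p/(3\chi+6)-2$ is spent. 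With these two repairs your argument coincides with the paper's.
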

\begin{proof}
The proof of the lemma is done for any $q \geq 4$ since the results for the case $q=2$ is a direct consequence of Lemma \ref{lem:a-tilde a:rate} when $\gamma\equiv0$.
As before, one uses Ito's formula to write $b^k(x_s^n)-b^k(x_{\kappa(n,s)}^n)$ and then obtains the following,
\begin{align}
&E\int_{0}^t |e_s^n|^{q-2} e_s^n(b(x_s^n)-b(x_{\kappa(n,s)}^n))ds  = \sum_{k=1}^d E\int_{0}^t |e_s^n|^{q-2} e_s^{n,k}(b^k(x_s^n)-b^k(x_{\kappa(n,s)}^n))ds \notag
\\
&=\sum_{k,i=1}^d  E \int_{0}^t |e_s^n|^{q-2} e_s^{n,k} \int_{\kappa(n,s)}^s \frac{\partial b^k(x_r^n)}{\partial x^i} \tilde{b}^{n,i}(x_{\kappa(n,r)}^n)dr ds \notag
\\
 &+\frac{1}{2}\sum_{k,i,j=1}^{d}E\int_{0}^t |e_s^n|^{q-2} e_s^{n,k}  \int_{\kappa(n,s)}^s \frac{\partial^2 b^k(x_r^n)}{\partial x^i \partial x^j} \sum_{l=1}^{m} \tilde{\sigma}^{(i,l)}(x_{\kappa(n,r)}^n)\tilde{\sigma}^{(j,l)}(x_{\kappa(n,r)}^n)dr ds \notag
 \\
 & +\sum_{k,i=1}^d E\int_{0}^t |e_s^n|^{q-2} e_s^{n,k}\int_{\kappa(n,s)}^s \frac{\partial b^k(x_r^n)}{\partial x^i} \sum_{l=1}^m \tilde{\sigma}^{(i,l)}(x_{\kappa(n,r)}^n)dw_r^l ds \notag
 \\
 & =:T_1+T_2+T_3. \label{eq:T1+T3}
\end{align}
By using Schwarz and Young's   inequalities, $T_1$ can be estimated as
\begin{align*}
T_1:= & \sum_{k,i=1}^d  E \int_{0}^t |e_s^n|^{q-2}e_s^{n,k} \int_{\kappa(n,s)}^s \frac{\partial b^k(x_r^n)}{\partial x^i} \tilde{b}^{n,i}(x_{\kappa(n,r)}^n)dr ds
\\
\leq & K  E \int_{0}^t  |e_s^{n}|^q ds +K \sum_{k,i=1}^d  E \int_{0}^t  \Big|\int_{\kappa(n,s)}^s \frac{\partial b^k(x_r^n)}{\partial x^i} \tilde{b}^{n,i}(x_{\kappa(n,r)}^n)dr \Big|^q ds
\end{align*}
which on the application of  Remarks [\ref{as:scheme:a^n},  \ref{as:sde:poly:Der}, \ref{as:sde:poly}] and Lemma \ref{lem:scheme:momentbound} implies
\begin{align}
T_1 \leq K   \int_{0}^t  \sup_{0 \leq r \leq s}E |e_r^{n}|^q ds +K n^{-q}. \label{eq:T1}
\end{align}
By using Schwarz, Young's and H\"older's inequalities, Remark \ref{as:sde:poly:Der}, Corollary \ref{lem:tildeb:momentbound} and Lemma \ref{lem:scheme:momentbound}, $T_2$ can be estimated as,
\begin{align}
T_2&:=\frac{1}{2}\sum_{k,i,j=1}^{d}E\int_{0}^t |e_s^n|^{q-2} e_s^{n,k}  \int_{\kappa(n,s)}^s \frac{\partial^2 b^k(x_r^n)}{\partial x^i \partial x^j} \sum_{l=1}^{m} \tilde{\sigma}^{(i,l)}(x_{\kappa(n,r)}^n)\tilde{\sigma}^{(j,l)}(x_{\kappa(n,r)}^n)dr ds \notag
\\
& \leq K   \int_{0}^t  \sup_{0 \leq r \leq s}E |e_r^{n}|^q ds +K n^{-q}. \label{eq:T2}
\end{align}
For the last term of equation \eqref{eq:T1+T3}, one uses It\^o's formula to obtain the following,
\begin{align} \label{eq:ito:new1}
|e_s^n|^{q-2} & e_s^{n,k}=|e_{\kappa(n,s)}^n|^{q-2} e_{\kappa(n,s)}^{n,k} + \int_{\kappa(n,s)}^s |e_r^n|^{q-2}(b^k(x_r)-\tilde{b}^{n,k}(x_{\kappa(n,r)}^n) ) dr \notag
\\
&+  \int_{\kappa(n,s)}^s |e_r^n|^{q-2} \sum_{v=1}^m\big(\sigma^{(k,v)}(x_r)-\tilde{\sigma}^{(k,v)}(x_{\kappa(n,r)}^n)\big)dw_r^v \notag
\\
& + (q-2)  \int_{\kappa(n,s)}^s e_r^{n,k} |e_r^n|^{q-4} e_r^n (b(x_r)-\tilde{b}^n(x_{\kappa(n,r)}^n)) dr \notag
\\
&+ (q-2) \int_{\kappa(n,s)}^s e_r^{n,k} |e_r^n|^{q-4} e_r^n (\sigma(x_r)-\tilde{\sigma}(x_{\kappa(n,r)}^n))  dw_r  \notag
\\
& + \frac{(q-2)(q-4)}{2}  \int_{\kappa(n,s)}^s e_r^{n,k} |e_r^n|^{q-6}| (\sigma(x_r)-\tilde{\sigma}(x_{\kappa(n,r)}^n))^{*}e_r^n|^2 dr  \notag
\\
& + \frac{q-2}{2}  \int_{\kappa(n,s)}^s e_r^{n,k}|e_r^n|^{q-4}|\sigma(x_r)-\tilde{\sigma}(x_{\kappa(n,r)}^n)|^2 dr \notag
\\
& + (q-2) \int_{\kappa(n,s)}^s \sum_{v=1}^m (\sigma^{(k,v)}(x_r)-\tilde{\sigma}^{(k,v)}(x_{\kappa(n,r)}^n))|e_r^n|^{q-4} \notag
\\
& \qquad \times\sum_{u=1}^d e_r^{n,u} (\sigma^{(u,v)}(x_r)-\tilde{\sigma}^{(u,v)}(x_{\kappa(n,r)}^n)) dr
\end{align}
almost surely for any $s \in [0,T]$. Hence, on substituting the values from equation \eqref{eq:ito:new1} in $T_3$ of equation  \eqref{eq:T1+T3}
and then on using Schwarz inequality, one obtains
\begin{align*}
T_3 &= \sum_{k,i=1}^d E\int_{0}^t |e_{\kappa(n,s)}^n|^{q-2} e_{\kappa(n,s)}^{n,k} \int_{\kappa(n,s)}^s \frac{\partial b^k(x_r^n)}{\partial x^i} \sum_{l=1}^m \tilde{\sigma}^{(i,l)}(x_{\kappa(n,r)}^n)dw_r^l ds \notag
\\
&+\sum_{k,i=1}^d E\int_{0}^t \int_{\kappa(n,s)}^s |e_r^n|^{q-2}|b(x_r)-\tilde{b}^{n}(x_{\kappa(n,r)}^n) | dr \notag
\\
& \qquad\times \Big|\int_{\kappa(n,s)}^s \frac{\partial b^k(x_r^n)}{\partial x^i} \sum_{l=1}^m \tilde{\sigma}^{(i,l)}(x_{\kappa(n,r)}^n)dw_r^l \Big|ds \notag
\\
& + \sum_{k,i=1}^d E\int_{0}^t \int_{\kappa(n,s)}^s |e_r^n|^{q-2} \sum_{v=1}^m\big(\sigma^{(k,v)}(x_r)-\tilde{\sigma}^{(k,v)}(x_{\kappa(n,r)}^n)\big)dw_r^v \notag
\\
& \qquad\times \int_{\kappa(n,s)}^s \frac{\partial b^k(x_r^n)}{\partial x^i} \sum_{l=1}^m \tilde{\sigma}^{(i,l)}(x_{\kappa(n,r)}^n)dw_r^l ds \notag
\\
& + K \sum_{k,i=1}^d E\int_{0}^t  \int_{\kappa(n,s)}^s e_r^{n,k} |e_r^n|^{q-4} e_r^n (\sigma(x_r)-\tilde{\sigma}(x_{\kappa(n,r)}^n))  dw_r \notag
\\
& \qquad \times \int_{\kappa(n,s)}^s \frac{\partial b^k(x_r^n)}{\partial x^i} \sum_{l=1}^m \tilde{\sigma}^{(i,l)}(x_{\kappa(n,r)}^n)dw_r^l ds \notag
\end{align*}
\begin{align} \label{eq:T31+T35}
& + K \sum_{k,i=1}^d E\int_{0}^t   \int_{\kappa(n,s)}^s |e_r^n|^{q-3}|\sigma(x_r)-\tilde{\sigma}(x_{\kappa(n,r)}^n)|^2 dr \Big| \notag
\\
& \qquad \times\int_{\kappa(n,s)}^s \frac{\partial b^k(x_r^n)}{\partial x^i} \sum_{l=1}^m \tilde{\sigma}^{(i,l)}(x_{\kappa(n,r)}^n)dw_r^l \Big|ds \notag
\\
&=:T_{31}+T_{32}+T_{33}+T_{34}+T_{35}.
\end{align}
Furthermore, for estimating $T_{31}$, one writes,
\begin{align*}
T_{31}&:=K\sum_{k,i=1}^d  \sum_{l=1}^{m} E \int_{0}^t |e_{\kappa(n,s)}^n|^{q-2} e_{\kappa(n,s)}^{n,k}\int_{\kappa(n,s)}^s\frac{\partial b^k(x_r^n)}{\partial x^i}  \tilde{\sigma}^{(i,l)}(x_{\kappa(n,r)}^n)dw_r^l ds\notag
\\
& =K\sum_{k,i=1}^d  \sum_{l=1}^{m} E \int_{0}^t |e_{\kappa(n,s)}^n|^{q-2} e_{\kappa(n,s)}^{n,k}\int_{\kappa(n,s)}^s\frac{\partial b^k(x_r^n)}{\partial x^i}  \sigma^{(i,l)}(x_{\kappa(n,r)}^n)dw_r^l ds\notag
\\
& +K\sum_{k,i=1}^d  \sum_{l=1}^{m} E \int_{0}^t |e_{\kappa(n,s)}^n|^{q-2} e_{\kappa(n,s)}^{n,k}\int_{\kappa(n,s)}^s\frac{\partial b^k(x_r^n)}{\partial x^i}  \sigma_1^{(i,l)}(x_{\kappa(n,r)}^n)dw_r^l ds\notag
\end{align*}
which on using the Young's inequality, H\"older's inequality and an elementary inequality of stochastic integrals as well as by noticing that the first term is zero, one obtains the following,
\begin{align*}
T_{31} & \leq K  \int_{0}^t \sup_{0 \leq r \leq s}E|e_r^n|^q ds
\\
& \qquad+K\sum_{k,i=1}^d  \sum_{l=1}^{m} E \int_{0}^t \Big|\int_{\kappa(n,s)}^s\frac{\partial b^k(x_r^n)}{\partial x^i}  \sigma_1^{(i,l)}(x_{\kappa(n,r)}^n)dw_r^l\Big|^q ds\notag
\\
& \leq K  \int_{0}^t \sup_{0 \leq r \leq s}E|e_r^n|^q ds
\\
& \qquad+Kn^{-\frac{q}{2}+1} E \int_{0}^t \int_{\kappa(n,s)}^s(1+|x_r^n|^\chi)^q  |\sigma_1(x_{\kappa(n,r)}^n)|^q dr ds\notag
\\
& \leq K  \int_{0}^t \sup_{0 \leq r \leq s}E|e_r^n|^q ds
\\
& \qquad +Kn^{-\frac{q}{2}+1} E \int_{0}^t \int_{\kappa(n,s)}^s (E(1+|x_r^n|^\chi)^{\frac{qp}{p-q}})^\frac{p-q}{p}  (E|\sigma_1(x_{\kappa(n,r)}^n)|^p)^\frac{q}{p} dr ds\notag
\end{align*}
and then the application of Lemmas [\ref{lem:b1:rate}, \ref{lem:scheme:momentbound}] yields,
\begin{align} \label{eq:T31}
T_{31} \leq K  \int_{0}^t \sup_{0 \leq r \leq s}E|e_r^n|^q ds +Kn^{-q}.
\end{align}
Also, to estimate $T_{32}$, one uses the splitting \eqref{eq:b:split:new} to obtain,
\begin{align*}
T_{32}& :=\sum_{k,i=1}^d E\int_{0}^t \int_{\kappa(n,s)}^s |e_r^n|^{q-2}|b(x_r)-\tilde{b}^{n}(x_{\kappa(n,r)}^n) | dr
\\
& \qquad \times \Big|\int_{\kappa(n,s)}^s \frac{\partial b^k(x_r^n)}{\partial x^i} \sum_{l=1}^m \tilde{\sigma}^{(i,l)}(x_{\kappa(n,r)}^n)dw_r^l \Big|ds
\\
& \leq K \sum_{k,i=1}^d E\int_{0}^t \int_{\kappa(n,s)}^s \big(n^\frac{1}{q}|e_r^n|\big)^{q-1}(1+|x_r|^{\chi}+|x_r^n|^{\chi}) dr
\\
& \qquad \times n^{-\frac{q-1}{q}} \Big|\int_{\kappa(n,s)}^s \frac{\partial b^k(x_r^n)}{\partial x^i} \sum_{l=1}^m \tilde{\sigma}^{(i,l)}(x_{\kappa(n,r)}^n)dw_r^l \Big|ds
\\
& + K \sum_{k,i=1}^d E\int_{0}^t \int_{\kappa(n,s)}^s \big(n^\frac{1}{q}|e_r^n|\big)^{q-2}(1+|x_r^n|^\chi+|x_{\kappa(n,r)}^n|^\chi)|x_r^n-x_{\kappa(n,r)}^n| dr
\\
 & \qquad\times n^{-\frac{q-2}{q}} \Big|\int_{\kappa(n,s)}^s \frac{\partial b^k(x_r^n)}{\partial x^i} \sum_{l=1}^m \tilde{\sigma}^{(i,l)}(x_{\kappa(n,r)}^n)dw_r^l \Big|ds
 \\
 & + K \sum_{k,i=1}^d E\int_{0}^t \int_{\kappa(n,s)}^s \big(n^\frac{1}{q}|e_r^n|\big)^{q-2}|b(x_{\kappa(n,r)}^n)-\tilde{b}^{n}(x_{\kappa(n,r)}^n) | dr
 \\
 & \qquad\times n^{-\frac{q-2}{q}} \Big|\int_{\kappa(n,s)}^s \frac{\partial b^k(x_r^n)}{\partial x^i} \sum_{l=1}^m \tilde{\sigma}^{(i,l)}(x_{\kappa(n,r)}^n)dw_r^l \Big|ds.
\end{align*}
Due to Young's inequality and Remark \ref{as:sde:poly},
\begin{align*}
T_{32} & \leq K E \int_{0}^t n \int_{\kappa(n,s)}^s |e_r^n|^{q} dr ds +  K n^{-q+1} \sum_{k,i=1}^d E \int_{0}^t  \int_{\kappa(n,s)}^s (1+|x_r|^{\chi}+|x_r^n|^{\chi})^q dr
\\
& \hspace{3cm} \times \Big|\int_{\kappa(n,s)}^s \frac{\partial b^k(x_r^n)}{\partial x^i} \sum_{l=1}^m \tilde{\sigma}^{(i,l)}(x_{\kappa(n,r)}^n)dw_r^l \Big|^q ds
\\
&  + K n^{-\frac{q-2}{2}} \sum_{k,i=1}^d E \int_{0}^t  \int_{\kappa(n,s)}^s (1+|x_r^n|^{\chi}+|x_{\kappa(n,r)}^n|^{\chi})^\frac{q}{2} |x_r^n-x_{\kappa(n,r)}^n|^\frac{q}{2} dr
\\
& \hspace{3cm} \times \Big|\int_{\kappa(n,s)}^s \frac{\partial b^k(x_r^n)}{\partial x^i} \sum_{l=1}^m \tilde{\sigma}^{(i,l)}(x_{\kappa(n,r)}^n)dw_r^l \Big|^\frac{q}{2} ds
\\
 & + K n^{-q+1} \sum_{k,i=1}^d E \int_{0}^t \int_{\kappa(n,s)}^s (1+|x_{\kappa(n,r)}^n|^{\chi+2})^q dr
\\
& \hspace{3cm} \times \Big|\int_{\kappa(n,s)}^s \frac{\partial b^k(x_r^n)}{\partial x^i} \sum_{l=1}^m \tilde{\sigma}^{(i,l)}(x_{\kappa(n,r)}^n)dw_r^l \Big|^\frac{q}{2} ds.
\end{align*}
which on using H\"older's inequality yields,
\begin{align*}
T_{32} & \leq K   \int_{0}^t \sup_{0 \leq r \leq s} E|e_r^n|^{q}  ds +  K n^{-q+1} \sum_{k,i=1}^d  \int_{0}^t  \Big[ E \Big(\int_{\kappa(n,s)}^s (1+|x_r|^{\chi}+|x_r^n|^{\chi})^q  dr \Big)^2\Big]^\frac{1}{2}
\\
& \hspace{3cm} \times \Big[E\Big|\int_{\kappa(n,s)}^s \frac{\partial b^k(x_r^n)}{\partial x^i} \sum_{l=1}^m \tilde{\sigma}^{(i,l)}(x_{\kappa(n,r)}^n)dw_r^l \Big|^{2q} \Big]^\frac{1}{2} ds
\\
&  + K n^{-\frac{q-2}{2}} \sum_{k,i=1}^d  \int_{0}^t  \Big[E \Big(\int_{\kappa(n,s)}^s (1+|x_r^n|^{\chi}+|x_{\kappa(n,r)}^n|^{\chi})^\frac{q}{2} |x_r^n-x_{\kappa(n,r)}^n|^\frac{q}{2} dr \Big)^2\Big]^\frac{1}{2}
\\
& \hspace{3cm} \times \Big[E\Big|\int_{\kappa(n,s)}^s \frac{\partial b^k(x_r^n)}{\partial x^i} \sum_{l=1}^m \tilde{\sigma}^{(i,l)}(x_{\kappa(n,r)}^n)dw_r^l \Big|^q\Big]^\frac{1}{2} ds
 \\
 & + K n^{-q+1} \sum_{k,i=1}^d  \int_{0}^t \Big[E \Big(\int_{\kappa(n,s)}^s (1+|x_{\kappa(n,r)}^n|^{\chi+2})^q dr \Big)^2\Big]^\frac{1}{2}
\\
& \hspace{3cm} \times \Big[E\Big|\int_{\kappa(n,s)}^s \frac{\partial b^k(x_r^n)}{\partial x^i} \sum_{l=1}^m \tilde{\sigma}^{(i,l)}(x_{\kappa(n,r)}^n)dw_r^l \Big|^q\Big]^\frac{1}{2} ds
\end{align*}
and then due to Remark \ref{as:sde:poly:Der}, one obtains
\begin{align*}
T_{32} & \leq K   \int_{0}^t \sup_{0 \leq r \leq s} E|e_r^n|^{q}  ds +  K n^{-q+1} \sum_{k,i=1}^d  \int_{0}^t  \Big[ n^{-1}E \int_{\kappa(n,s)}^s (1+|x_r|^{\chi}+|x_r^n|^{\chi})^{2q}  dr \Big]^\frac{1}{2}
\\
& \hspace{3cm} \times \Big[n^{-q+1}E\int_{\kappa(n,s)}^s (1+|x_r^n|^{\chi+1})^{2q}| \tilde{\sigma}(x_{\kappa(n,r)}^n)|^{2q} dr \Big]^\frac{1}{2} ds
\\
&  + K n^{-\frac{q-2}{2}}   \int_{0}^t  \Big[n^{-1}E \int_{\kappa(n,s)}^s (1+|x_r^n|^{\chi}+|x_{\kappa(n,r)}^n|^{\chi})^q |x_r^n-x_{\kappa(n,r)}^n|^q dr \Big]^\frac{1}{2}
\\
& \hspace{3cm} \times \Big[ n^{-\frac{q}{2}+1} E\int_{\kappa(n,s)}^s (1+|x_r^n|^{\chi+1})^q |\tilde{\sigma}(x_{\kappa(n,r)}^n)|^q dr \Big]^\frac{1}{2} ds
 \\
 & + K n^{-q+1}   \int_{0}^t \Big[n^{-1}E\int_{\kappa(n,s)}^s (1+|x_{\kappa(n,r)}^n|^{\chi+2})^{2q} dr \Big]^\frac{1}{2}
\\
& \hspace{3cm} \times \Big[n^{-\frac{q}{2}+1}E\int_{\kappa(n,s)}^s (1+|x_r^n|^{\chi+1})^q |\tilde{\sigma}(x_{\kappa(n,r)}^n)|^q dr \Big]^\frac{1}{2} ds.
\end{align*}
Thus, by the application of H\"older's inequality and Lemmas [\ref{lem:sde:momentbound}, \ref{lem:scheme:momentbound}, \ref{lem:one-step:rate:new}] and Corollary \ref{lem:tildeb:momentbound}, one obtains,
\begin{align}
T_{32} & \leq K \int_{0}^t  \sup_{0 \leq r \leq s}E|e_r^n|^q  ds+ K n^{-q}.\label{eq:T32}
\end{align}
For estimating $T_{33}$, one uses the splitting \eqref{eq:sigma:split:new} to write
\begin{align*}
T_{33} & :=\sum_{k,i=1}^d E\int_{0}^t \int_{\kappa(n,s)}^s |e_r^n|^{q-2} \sum_{v=1}^m\big(\sigma^{(k,v)}(x_r)-\tilde{\sigma}^{(k,v)}(x_{\kappa(n,r)}^n)\big)dw_r^v
\\
& \qquad \times\int_{\kappa(n,s)}^s \frac{\partial b^k(x_r^n)}{\partial x^i} \sum_{l=1}^m \tilde{\sigma}^{(i,l)}(x_{\kappa(n,r)}^n)dw_r^l ds
\end{align*}

\begin{align*}
& =\sum_{k,i=1}^d \sum_{v,l=1}^mE\int_{0}^t \int_{\kappa(n,s)}^s |e_r^n|^{q-2} \big(\sigma^{(k,v)}(x_r)-\sigma^{(k,v)}(x_r^n)\big)dw_r^v
\\
& \qquad \times \int_{\kappa(n,s)}^s \frac{\partial b^k(x_r^n)}{\partial x^i} \tilde{\sigma}^{(i,l)}(x_{\kappa(n,r)}^n)dw_r^l ds
\\
& + \sum_{k,i=1}^d \sum_{v,l=1}^m E\int_{0}^t \int_{\kappa(n,s)}^s |e_r^n|^{q-2} \big(\sigma^{(k,v)}(x_r^n)-\tilde{\sigma}^{(k,v)}(x_{\kappa(n,r)}^n)\big)dw_r^v
\\
& \qquad \times \int_{\kappa(n,s)}^s \frac{\partial b^k(x_r^n)}{\partial x^i} \tilde{\sigma}^{(i,l)}(x_{\kappa(n,r)}^n)dw_r^l ds
\\
& =\sum_{k,i=1}^d \sum_{v=1}^m E\int_{0}^t \int_{\kappa(n,s)}^s |e_r^n|^{q-2} \big(\sigma^{(k,v)}(x_r)-\sigma^{(k,v)}(x_r^n)\big)
\\
&\qquad \times \frac{\partial b^k(x_r^n)}{\partial x^i} \tilde{\sigma}^{(i,v)}(x_{\kappa(n,r)}^n)dr ds
\\
& + \sum_{k,i=1}^d \sum_{v,l=1}^m E\int_{0}^t \int_{\kappa(n,s)}^s |e_r^n|^{q-2} \big(\sigma^{(k,v)}(x_r^n)-\tilde{\sigma}^{(k,v)}(x_{\kappa(n,r)}^n)\big)
\\
& \qquad \times \frac{\partial b^k(x_r^n)}{\partial x^i} \tilde{\sigma}^{(i,v)}(x_{\kappa(n,r)}^n)dr ds
\end{align*}
and then  Assumption A-2 and Remark \ref{as:sde:poly} give
\begin{align} \label{eq:T33:older}
T_{33} &\leq K  E\int_{0}^t \int_{\kappa(n,s)}^s \big(n^\frac{1}{q}|e_r^n|\big)^{q-1} n^{-\frac{q-1}{q}}(1+|x_r^n|^{\chi+1}) |\tilde{\sigma}(x_{\kappa(n,r)}^n)|dr ds \notag
\\
& + K E\int_{0}^t \int_{\kappa(n,s)}^s \big(n^\frac{1}{q} |e_r^n|\big)^{q-2} n^{-\frac{q-2}{q}}|\sigma(x_r^n)-\tilde{\sigma} \notag
\\
& \qquad \times (x_{\kappa(n,r)}^n)|(1+|x_r^n|^{\chi+1})|\tilde{\sigma}(x_{\kappa(n,r)}^n)| dr ds
\end{align}
which due to Young's inequality and H\"older's inequality yields
\begin{align*}
& T_{33} 
 \leq K  \int_{0}^t \sup_{0 \leq r \leq s} E |e_r^n|^q ds
\\
& + K n^{-q+1} \int_{0}^t \int_{\kappa(n,s)}^s   (E(1+|x_r^n|^{\chi+1})^{2q} E|\tilde{\sigma}(x_{\kappa(n,r)}^n)|^{2q})^\frac{1}{2} dr ds
\\
&  + K n^{-\frac{q}{2}+1} \int_{0}^t \int_{\kappa(n,s)}^s   (E|\sigma(x_r^n)-\tilde{\sigma}(x_{\kappa(n,r)}^n)|^qE(1+|x_r^n|^{\chi+1})^q|\tilde{\sigma}(x_{\kappa(n,r)}^n)|^q)^\frac{1}{2}   dr ds.
\end{align*}
and on the application of Lemmas [\ref{lem:scheme:momentbound}, \ref{lem:b-tilde b:rate:new}] and Corollary \ref{lem:tildeb:momentbound}, one obtains
\begin{align} \label{eq:T33}
T_{33} & \leq K  \int_{0}^t \sup_{0 \leq r \leq s} E |e_r^n|^q ds + K  n^{-q}.
\end{align}
To estimate $T_{34}$, one observes that
\begin{align*}
T_{34} & :=K \sum_{k,i=1}^d E\int_{0}^t  \int_{\kappa(n,s)}^s e_r^{n,k} |e_r^n|^{q-4} e_r^n (\sigma(x_r)-\tilde{\sigma}(x_{\kappa(n,r)}^n))  dw_r
\\
& \qquad \times \int_{\kappa(n,s)}^s \frac{\partial b^k(x_r^n)}{\partial x^i} \sum_{l=1}^m \tilde{\sigma}^{(i,l)}(x_{\kappa(n,r)}^n)dw_r^l ds \notag
\\
& =K \sum_{k,i,u=1}^d  \sum_{v,l=1}^m  E\int_{0}^t  \int_{\kappa(n,s)}^s e_r^{n,k} |e_r^n|^{q-4} e_r^{n,u} (\sigma^{(u,v)}(x_r)-\tilde{\sigma}^{(u,v)}(x_{\kappa(n,r)}^n))  dw_r^v
\\
& \hspace{1cm} \times \int_{\kappa(n,s)}^s \frac{\partial b^k(x_r^n)}{\partial x^i}  \tilde{\sigma}^{(i,l)}(x_{\kappa(n,r)}^n)dw_r^l ds \notag
\\
& =K \sum_{k,i,u=1}^d  \sum_{v=1}^m  E\int_{0}^t  \int_{\kappa(n,s)}^s e_r^{n,k} |e_r^n|^{q-4} e_r^{n,u} (\sigma^{(u,v)}(x_r)-\tilde{\sigma}^{(u,v)}(x_{\kappa(n,r)}^n))
\\
& \qquad \times \frac{\partial b^k(x_r^n)}{\partial x^i}  \tilde{\sigma}^{(i,v)}(x_{\kappa(n,r)}^n)dr ds \notag
\end{align*}
and then due to splitting \eqref{eq:sigma:split:new} along with Remark \ref{as:sde:poly:Der}, one obtains,
\begin{align*}
T_{34} &\leq K  E\int_{0}^t \int_{\kappa(n,s)}^s |e_r^n|^{q-1} (1+|x_r^n|^{\chi+1}) |\tilde{\sigma}(x_{\kappa(n,r)}^n)|dr ds
\\
& + K E\int_{0}^t \int_{\kappa(n,s)}^s |e_r^n|^{q-2} |\sigma(x_r^n)-\tilde{\sigma}(x_{\kappa(n,r)}^n)|(1+|x_r^n|^{\chi+1})|\tilde{\sigma}(x_{\kappa(n,r)}^n)| dr ds
\end{align*}
which is exactly the same as $T_{33}$ given in equation \eqref{eq:T33:older}. Thus, by proceeding exactly the same, one obtains,
\begin{align} \label{eq:T34}
T_{34} & \leq K  \int_{0}^t \sup_{0 \leq r \leq s} E |e_r^n|^q ds + K  n^{-q}.
\end{align}
Moreover, for estimating $T_{35}$, one again uses the splitting \eqref{eq:sigma:split:new} to obtain,
\begin{align*}
T_{35}& :=K \sum_{k,i=1}^d E\int_{0}^t   \int_{\kappa(n,s)}^s |e_r^n|^{q-3}|\sigma(x_r)-\tilde{\sigma}(x_{\kappa(n,r)}^n)|^2 dr
\\
& \qquad\times \Big|\int_{\kappa(n,s)}^s \frac{\partial b^k(x_r^n)}{\partial x^i} \sum_{l=1}^m \tilde{\sigma}^{(i,l)}(x_{\kappa(n,r)}^n)dw_r^l \Big|ds \notag
\\
& \leq K \sum_{k,i=1}^d E\int_{0}^t   \int_{\kappa(n,s)}^s |e_r^n|^{q-1}dr \Big|\int_{\kappa(n,s)}^s \frac{\partial b^k(x_r^n)}{\partial x^i} \sum_{l=1}^m \tilde{\sigma}^{(i,l)}(x_{\kappa(n,r)}^n)dw_r^l \Big|ds \notag
\\
&+ K \sum_{k,i=1}^d E\int_{0}^t   \int_{\kappa(n,s)}^s |e_r^n|^{q-3}|\sigma(x_r^n)-\tilde{\sigma}(x_{\kappa(n,r)}^n)|^2 dr
\\
& \qquad\times \Big|\int_{\kappa(n,s)}^s \frac{\partial b^k(x_r^n)}{\partial x^i} \sum_{l=1}^m \tilde{\sigma}^{(i,l)}(x_{\kappa(n,r)}^n)dw_r^l \Big|ds \notag
\end{align*}
which  due to H\"older's inequality implies
\begin{align*}
&T_{35}  \leq K \sum_{k,i=1}^d \int_{0}^t  \Big( E\Big[\int_{\kappa(n,s)}^s |e_r^n|^{q-1}dr \Big]^\frac{q}{q-1}\Big)^\frac{q-1}{q}
\\
& \times \Big(E\Big|\int_{\kappa(n,s)}^s \frac{\partial b^k(x_r^n)}{\partial x^i} \sum_{l=1}^m \tilde{\sigma}^{(i,l)}(x_{\kappa(n,r)}^n)dw_r^l \Big|^q\Big)^\frac{1}{q} ds \notag
\\
&+ K \sum_{k,i=1}^d \int_{0}^t   \Big(E\Big[\int_{\kappa(n,s)}^s |e_r^n|^{q-3} |\sigma(x_r^n)-\tilde{\sigma}(x_{\kappa(n,r)}^n)|^2 dr \Big]^\frac{q}{q-1}\Big)^\frac{q-1}{q}
\\
& \times \Big(E\Big|\int_{\kappa(n,s)}^s \frac{\partial b^k(x_r^n)}{\partial x^i} \sum_{l=1}^m \tilde{\sigma}^{(i,l)}(x_{\kappa(n,r)}^n)dw_r^l \Big|^q\Big)^\frac{1}{q} ds \notag
\end{align*}
and then an elementary inequality of stochastic integrals along with H\"older's inequality yields
\begin{align*}
 T_{35}  & \leq K n^{-\frac{1}{2}} \int_{0}^t  \Big(n^{-\frac{q}{q-1}+1} E\int_{\kappa(n,s)}^s |e_r^n|^q dr \Big)^\frac{q-1}{q}  ds \notag
\\
&+ K n^{-\frac{1}{2}} \int_{0}^t   \Big(n^{-\frac{q}{q-1}+1} E\int_{\kappa(n,s)}^s |e_r^n|^\frac{(q-3)q}{q-1} |\sigma(x_r^n)-\tilde{\sigma}(x_{\kappa(n,r)}^n)|^\frac{2q}{q-1} dr  \Big)^\frac{q-1}{q}  ds \notag
\\
& \leq K n^{-\frac{3}{2}} \int_{0}^t  \Big( \sup_{0 \leq r \leq s}E|e_r^n|^q  \Big)^\frac{q-1}{q}  ds  \notag
\\
&+ K n^{-\frac{1}{2}} \int_{0}^t   \Big(n^{-\frac{q}{q-1}+1} \int_{\kappa(n,s)}^s (E|e_r^n|^q)^\frac{q-3}{q-1} (E|\sigma(x_r^n)-\tilde{\sigma}(x_{\kappa(n,r)}^n)|^q)^\frac{2}{q-1} dr  \Big)^\frac{q-1}{q}  ds \notag
\\
& \leq K n^{-\frac{3}{2}} \int_{0}^t  \Big( \sup_{0 \leq r \leq s}E|e_r^n|^q  \Big)^\frac{q-1}{q}  ds + K n^{-\frac{7}{2}} \int_{0}^t    \Big(\sup_{0 \leq r \leq s} E|e_r^n|^q\Big)^\frac{q-3}{q}   ds. \notag
\end{align*}
Thus, by Young's inequality,
\begin{align} \label{eq:T35}
T_{35} \leq  K\int_{0}^t  \sup_{0 \leq r \leq s}E|e_r^n|^q   ds + K n^{-\frac{3q}{2}} + K n^{-\frac{7q}{6}}.
\end{align}
Hence, on substituting the estimates from \eqref{eq:T31}, \eqref{eq:T32}, \eqref{eq:T33}, \eqref{eq:T34} and \eqref{eq:T35} in \eqref{eq:T31+T35}, one obtains
\begin{align} \label{eq:T3}
T_{3} \leq  K\int_{0}^t  \sup_{0 \leq r \leq s}E|e_r^n|^q   ds + K n^{-q}.
\end{align}
Finally, on combining the estimates from \eqref{eq:T1}, \eqref{eq:T2} and \eqref{eq:T3} in \eqref{eq:T1+T3}, one completes the proof.
\end{proof}

\begin{proof}[\textbf{Proof of Theorem \ref{thm:main:theorem}}]
Let us take $e_t^n := x_t-x_t^n$ i.e.,
\begin{align*}
e_t^n =  \int_{0}^{t}(b(x_s)-\tilde{b}^n(x_{\kappa(n,s)}^n))ds+\int_{0}^{t}(\sigma(x_s)-\tilde{\sigma}(x_{\kappa(n,s)}^n))dw_s
\end{align*}
for any $t \in [0, T]$. By It\^{o}'s formula, for any $q \geq 2$,
\begin{align*}
|e_t^n|^q& = q \int_{0}^{t} |e_s^n|^{q-2} e_s^n(b(x_s)-\tilde{b}^n(x_{\kappa(n,s)}^n)) ds
\\
& + q \int_{0}^{t} |e_s^n|^{q-2} e_s^n(\sigma(x_s)-\tilde{\sigma}(x_{\kappa(n,s)}^n)) dw_s
\\
&  +   \frac{q(q-2)}{2} \int_{0}^{t} |e_s^n|^{q-4}|(\sigma(x_s)-\tilde{\sigma}(x_{\kappa(n,s)}^n))^*e_s^n|^2 ds
\\
& +   \frac{q}{2} \int_{0}^{t} |e_s^n|^{q-2}|\sigma(x_s)-\tilde{\sigma}(x_{\kappa(n,s)}^n)|^2 ds
\end{align*}
almost surely  for any $t \in [0,T]$,  which on taking expectation and using Schwarz inequality implies
\begin{align} \label{eq:Eito:}
E|e_t^n|^q& \leq  q E\int_{0}^{t} |e_s^n|^{q-2} e_s^n(b(x_s)-\tilde{b}^n(x_{\kappa(n,s)}^n)) ds   \notag
\\
&+   \frac{q(q-1)}{q}  \int_{0}^{t} |e_s^n|^{q-2}|\sigma(x_s)-\tilde{\sigma}(x_{\kappa(n,s)}^n)|^2 ds.
\end{align}
Thus, by using the estimates from \eqref{eq:enbn:rate} and \eqref{eq:sigm:split:rate1}, one obtains,
\begin{align}
E|e_t^n|^q & \leq  K E\int_{0}^{t} |e_s^n|^q ds + K E\int_{0}^{t} |e_s^n|^{q-2} e_s^n(b(x_s^n)-b^n(x_{\kappa(n,s)}^n)) ds \notag
\\
& + K E\int_{0}^{t} |b(x_{\kappa(n,s)}^n)-\tilde{b}^n(x_{\kappa(n,s)}^n)|^q ds  +   K  E \int_{0}^{t} |\sigma(x_s^n)-\tilde{\sigma}(x_{\kappa(n,s)}^n)|^q ds
\end{align}
for any $t \in [0,T]$. Further, on the application of Lemmas [\ref{lem:form:rate:new}, \ref{lem:b-tilde b:rate:new}, \ref{lem:a-tilde a:rate:new}] gives
\begin{align*}
\sup_{0 \leq t \leq u}E|e_t^n|^q \leq K \int_{0}^{u} \sup_{0 \leq r \leq s}E|e_r^n|^q ds+Kn^{-q}
\end{align*}
for any $u \in [0,T]$ and hence Gronwall's lemma completes the proof.
\end{proof}
\begin{proof}[\textbf{Proof of Theorem \ref{thm:main:theorem:sup:inside}}]
This follows due to Theorem \ref{thm:main:theorem} and Lemma  \ref{lem:yor}.
\end{proof}
\section{Numerical Simulation}
In this section, we demonstrate theoretical results obtained in this article with the help of some examples. In order to implement the scheme \eqref{eq:milstein} of SDE \eqref{eq:sde} and scheme \eqref{eq:milstein:continuous} of SDE \eqref{eq:sde:continuous}, one requires commutative conditions on  diffusion and jump coefficients. One can refer to \cite{kloeden1999} and \cite{platen2010}  for details. For the purpose of this section, we consider one-dimensional SDEs defined on interval $[0,1]$, which is partitioned into $2^n$ sub-intervals of equal length (step-size) $h=2^{-n}$ for some $n \in \mathbb{N}$. In what follows, $x_{lh}^h$ denotes the tamed Milstein scheme at $lh$-th grid point and $\Delta w_{lh}:=w_{(l+1)h}-w_{lh}$ for $l=0,\ldots,2^n$.
\begin{example}
Let us consider one-dimensional SDE defined by,
\begin{align} \label{eq:sim:m1a}
dx_t=(x_t-x_t^5) dt+x_t dw_t
\end{align}
for any $t \in [0, 1]$  with $x_0 =1$. The tamed Milstein scheme of SDE \eqref{eq:sim:m1a} at $(l+1)h$-th grid is given by
\begin{align} \label{eq:sim:tmils1a}
x_{(l+1)h}^h=x_{lh}^h+\frac{x_{lh}^h-(x_{lh}^h)^5}{1+h|x_{lh}^h-(x_{lh}^h)^5|^2}h+x_{lh}^h\Delta w_{lh}+\frac{1}{2}x_{lh}^h\{(\Delta w_{lh})^2-h\}
\end{align}
for $l=0,\ldots,2^n-1$ with $h=2^{-n}$. Here, the above scheme with $h=2^{-21}$ is taken as true solution of SDE \eqref{eq:sim:m1a}. From Table \ref{tab:m1a} and Figure \ref{fig:m1a}, one can notice that  $\mathcal{L}^q$ convergence rates of scheme \eqref{eq:sim:tmils1a} are approximately $1.0$ for all $q=1,\ldots, 5$, which are consistent with our theoretical findings. The number of paths considered is $60,000$.

\begin{figure}[h]
\centering
\includegraphics[scale=.50]{}
\caption{\em  $\mathcal{L}^q$-convergence rate of  tamed Milstein scheme \eqref{eq:sim:tmils1a} of SDE \eqref{eq:sim:m1a}}
\label{fig:m1a}
\end{figure}

\begin{table}[h]
\centering
\begin{tabular}{|c|c|c|c|c|c|} \hline
$h=2^{-n}$ & $E|x_T-x_T^h|$   & $\sqrt{E|x_T-x_T^h|^2}$ & $\sqrt[3]{E|x_T-x_T^h|^3}$ & $\sqrt[4]{E|x_T-x_T^h|^4}$ & $\sqrt[5]{E|x_T-x_T^h|^5}$ \\ \hline
$2^{-20}$& 0.0000007223 & 0.0000027396 & 0.0000071338  & 0.0000133872 & 0.0000206505 \\
$2^{-19}$& 0.0000020488 & 0.0000081546 & 0.0000213011    & 0.0000400356 & 0.0000618759 \\
$2^{-18}$& 0.0000046829 & 0.0000190372 & 0.0000498670  & 0.0000937473 & 0.0001448460 \\
$2^{-17}$& 0.0000099526 & 0.0000408359 & 0.0001072262 & 0.0002022094 & 0.0003133946 \\
$2^{-16}$& 0.0000205589 & 0.0000844630 & 0.0002227601  & 0.0004218232 & 0.0006555927 \\
$2^{-15}$& 0.0000417394 & 0.0001723833 & 0.0004554010  & 0.0008642163 & 0.0013460486 \\
$2^{-14}$& 0.0000843948 & 0.0003519474 & 0.0009360518  & 0.0017870537 & 0.0027962853 \\
$2^{-13}$& 0.0001710052 & 0.0007232200 & 0.0019649384  & 0.0038259755 & 0.0060684654 \\
$2^{-12}$& 0.0003479789 & 0.0015293072 & 0.0043796657  & 0.0089031484 & 0.0144907359 \\
$2^{-11}$& 0.0007231189 & 0.0035802581 & 0.0118774764  & 0.0259649292 & 0.0432914310  \\ \hline
\end{tabular}
\caption{\em Tamed Milstein scheme \eqref{eq:sim:tmils1a} of SDE \eqref{eq:sim:m1a}}
\label{tab:m1a}
\end{table}

\hfill $\square$
\end{example}
\begin{example} 
Let us now consider the following one-dimensional SDE,
\begin{align} \label{eq:sim:m2}
dx_t =& -0.10 x_t^3 dt + x_t dw_t + \int_{\mathbb{R}} x_t z \tilde N(dt, dz)
\end{align}
for any $t \in [0, 1]$  with $x_0 =1$. The tamed Milstein scheme of SDE \eqref{eq:sim:m2} is given by,
\begin{align}  \label{eq:sim:tmils2}
x_{(l+1)h}^h = & x_{lh}^h  + \frac{-0.10 (x_{lh}^{h})^3}{1+ h |0.10(x_{lh}^{h})^3|^2}h + x_{lh}^h\Delta w_{lh} \notag
\\
& + x_{lh}^h \sum_{i=N((h, \mathbb{R})+1}^{N((l+1)h,\mathbb{R})} z_i  +   \frac{1}{2} x_{lh}^h \{(\Delta w_{lh})^2-h\} \notag
\\
&+x_{lh}^h \sum_{i=N(lh,\mathbb{R})+1}^{N((l+1)h,\mathbb{R})} z_i (w_{(l+1)h}-w_{\tau_i}) + x_{lh}^h \sum_{i=N(lh,\mathbb{R})+1}^{N((l+1)h,\mathbb{R})} z_i (w_{\tau_i}-w_{lh})  \notag
\\
&+x_{lh}^h \sum_{j=N(lh,\mathbb{R})+1}^{N((l+1)h,\mathbb{R})} \sum_{i=N(lh,\mathbb{R})+1}^{N(\tau_j,\mathbb{R})} z_i z_j
\end{align}
for $l=0,\ldots, 2^{n}-1$. Notice that $N(t, \mathbb{R})$ is a Poisson process with (jump) intensity $\lambda$.

The case $\lambda=3$ in Table \ref{tab:mils2a} and its corresponding plot in Figure \ref{fig:mils2a} are based on $60,000$ trajectories while case $\lambda=5$ of Table \ref{tab:mils2a} and its corresponding plot in Figure \ref{fig:mils2b} are based on $360,000$ paths. In both the cases, the mark random variables  $z_i$s (jump-sizes) are assumed to follow normal distribution with mean $0$  and variance $0.125$.

Similarly, both the cases $\lambda=3$ and $\lambda=5$ in Table \ref{tab:mils2b} and their corresponding plots in Figure \ref{fig:milsc} and Figure \ref{fig:milsd} are based on $240,000$ simulations. Here, the mark random variables $z_i$s are assumed to follow uniform distribution on $[-1/4,1/4]$.

\begin{figure}[h]
\centering
\begin{minipage}[b]{0.48\linewidth}
\includegraphics[scale=.48]{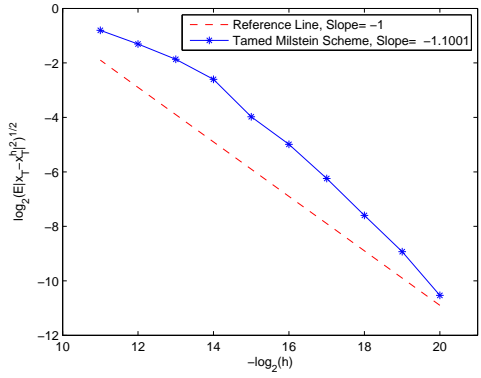}
\subcaption{\em  $\lambda=3$ and mark is normal with mean $0$ and variance $0.125$}
\label{fig:mils2a}
\end{minipage}
\hfill
\begin{minipage}[b]{0.48\linewidth}
\includegraphics[scale=.48]{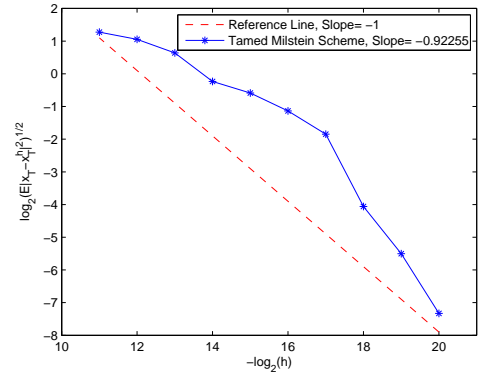}
\subcaption{\em  $\lambda=5$ and mark is normal with mean $0$ and variance $0.125$}
\label{fig:mils2b}
\end{minipage}
\\
\centering
\begin{minipage}[b]{0.48\linewidth}
\includegraphics[scale=.48]{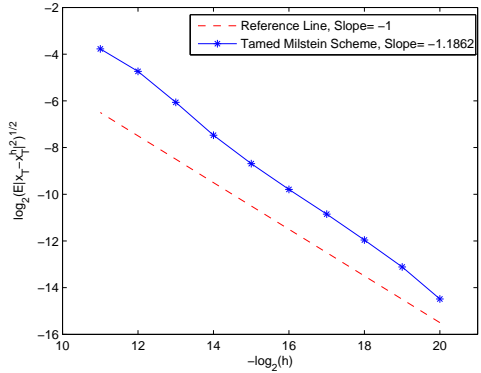}
\subcaption{\em $\lambda=3$ and mark is uniform on $[-1/4,1/4]$}
\label{fig:milsc}
\end{minipage}
\hfill
\begin{minipage}[b]{0.48\linewidth}
\includegraphics[scale=.48]{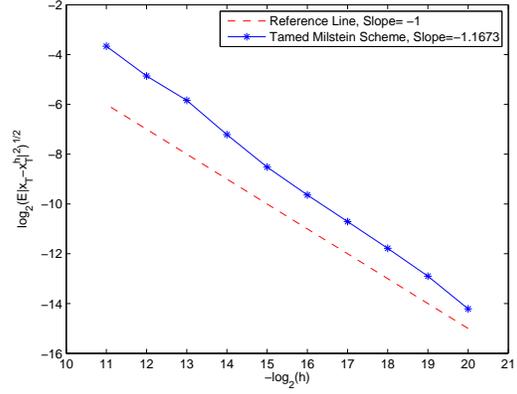}
\subcaption{\em \footnotesize $\lambda=5$ and mark is uniform on $[-1/4,1/4]$}
\label{fig:milsd}
\end{minipage}
\caption{\em $\mathcal{L}^2$-convergence rate of tamed Milstein scheme \eqref{eq:sim:tmils2} of SDE \eqref{eq:sim:m2}.}
\label{fig:mils2}
\end{figure}

\begin{table}[h]
\centering
\begin{minipage}{0.45\textwidth}
\centering
\begin{tabular}{|c|c|c|}
\hline %
$h=2^{-n}$    & \multicolumn{2}{|c|}{$\sqrt{E|x_T-x_T^h|^2}$ } \\ \cline{2-3}
          & $\lambda=3.0$   &  $\lambda=5.0$ \\ \hline
$2^{-20}$ & 0.00067484 & 0.00621555 \\
$2^{-19}$ & 0.00204889 & 0.02203719 \\
$2^{-18}$ & 0.00515874 & 0.06003098 \\
$2^{-17}$ & 0.01321011 & 0.27830910 \\
$2^{-16}$ & 0.03146860 & 0.45542612 \\
$2^{-15}$ & 0.06349005 & 0.66561201 \\
$2^{-14}$ & 0.16459365 & 0.85082102 \\
$2^{-13}$ & 0.27426757 & 1.55620505 \\
$2^{-12}$ & 0.40437133 & 2.07850380 \\
$2^{-11}$ & 0.57251083 & 2.41922833 \\ \hline
\end{tabular}
\subcaption{\em \footnotesize Mark is normal with mean $0$ and variance $0.125$.}
\label{tab:mils2a}
\end{minipage}%
\hfill
\begin{minipage}{0.480\textwidth}
\centering
\begin{tabular}{|c|c|c|}
\hline
$h=2^{-n}$    & \multicolumn{2}{|c|}{$\sqrt{E|x_T-x_T^h|^2}$ } \\ \cline{2-3}
          & $\lambda=3.0$   &  $\lambda=5.0$ \\ \hline
$2^{-20}$ & 0.00004365 & 0.00005260 \\
$2^{-19}$ & 0.00011286 & 0.00013058 \\
$2^{-18}$ & 0.00025133 & 0.00028420 \\
$2^{-17}$ & 0.00054134 & 0.00059787  \\
$2^{-16}$ & 0.00112643 & 0.00125615 \\
$2^{-15}$ & 0.00242178 & 0.00272857 \\
$2^{-14}$ & 0.00563126 & 0.00673629 \\
$2^{-13}$ & 0.01497166 & 0.01747566 \\
$2^{-12}$ & 0.03745749 & 0.03448135 \\
$2^{-11}$ & 0.07302734 & 0.07900926 \\ \hline
\end{tabular}
\subcaption{\em  Mark is uniform on $[-1/4,1/4]$.}
\label{tab:mils2b}
\end{minipage}
\caption{\em Tamed Milstein scheme \eqref{eq:sim:tmils2}  of SDE \eqref{eq:sim:m2}.}
\label{tab:mils2}
\end{table}

It is evident from plots in Figure \ref{fig:mils2} that rate of convergence of the tamed Milstein scheme of SDE driven by L\'evy noise depends significantly on the distribution of mark  random variable and the jump intensity.

\hfill $\square$
\end{example}
\newpage
\appendix
\section*{Appendix A}
In this section, one discusses about implementing the scheme \eqref{eq:milstein} on computer. For the purpose of simplicity, let us assume that
\begin{align} \label{eq:gam:mean}
\int_Z \gamma(x,z)\nu(dz) =0
\end{align}
for any $x \in \mathbb{R}^d$. Thus, SDE \eqref{eq:sde} becomes
\begin{align} \label{eq:sde:modi}
x_t=\xi+\int_0^t b(x_s)ds +\int_0^t\sigma(x_s)dw_s +\int_0^t\int_Z \gamma(x_s,z)N(ds,dz)
\end{align}
for any $t \in [0,T]$. One can partition the interval $[0,T]$ into $H$ subintervals of length $h$ such that $Hh=T$, in particular $H=2^{n}T$ and $h=2^{-n}$ for any $n \in \mathbb{N}$. For simplicity, one can take $T=1$. As a consequence, the tamed Milstein scheme  \eqref{eq:milstein} at the $(l+1)h$-th grid can be written as,
\begin{align}  \label{eq:milstein:modif}
x_{(l+1)h}^h &=x_{lh}^h  + \tilde{b}^n(x_{lh}^h)h + \sum_{k=1}^m\sigma^{(k)}(x_{lh}^h)\Delta w_{lh}^k + \int_{lh}^{(l+1)h}\int_Z \gamma(x_{lh}^h, z_2) N(ds,dz_2) \notag
\\
& + \sum_{j,k=1}^m\sum_{u=1}^d  \int_{lh}^{(l+1)h} \int_{lh}^s  \sigma^{(u,j)}(x_{lh}^h)\frac{\partial \sigma^{(k)} (x_{lh}^h)}{\partial x^u} dw_r^j dw_s^k \notag
\\
&+\sum_{j=1}^m\int_{lh}^{(l+1)h} \int_{lh}^s \int_{Z}\big\{\sigma^{(j)}(x_{lh}^h+\gamma(x_{lh}^h, z_1))-\sigma^{(j)}(x_{lh}^h)\big\} N(dr,dz_1) dw_s^j \notag
\\
&+ \sum_{j=1}^m \sum_{u=1}^d \int_{lh}^{(l+1)h}\int_{lh}^s \int_Z   \frac{\partial \gamma(x_{lh}^h, z_2)}{\partial x^u} \sigma^{(u,j)}(x_{lh}^h)dw_r^j N(ds,dz_2) \notag
\\
&\hspace{-5mm}+\int_{lh}^{(l+1)h}\int_Z \int_{lh}^s \int_{Z} \big\{ \gamma(x_{lh}^h+\gamma(x_{lh}^h, z_1), z_2)-\gamma(x_{lh}^h, z_2) \big\} N(dr,dz_1)  N(ds,dz_2)
\end{align}
for any $l=0,\ldots,2^n-1$, where $h=2^{-n}$ for $n \in \mathbb{N}$. The drift coefficient $\tilde{b}^n$ of scheme \eqref{eq:milstein:modif} is defined in equation \eqref{eq:an:tilde}. The one-dimensional case of scheme \eqref{eq:milstein:modif} is given by
\begin{align}  \label{eq:milstein:modif:1d}
x_{lh+h}^h &=x_{lh}^h  + \tilde{b}^n(x_{lh}^h)h + \sigma(x_{lh}^h)\Delta w_{lh} + \sum_{i=N(lh, Z)+1}^{N(lh+h,Z)} \gamma(x_{lh}^h, z_i) \notag
\\
& +   \frac{1}{2} \sigma(x_{lh}^h) \frac{\partial \sigma(x_{lh}^h)}{\partial x^u} \{(\Delta w_{lh})^2-h\} \notag
\\
&+\sum_{i=N(lh,Z)+1}^{N(lh+h,Z)} \big\{\sigma(x_{lh}^h+\gamma(x_{lh}^h, z_i))-\sigma(x_{lh}^h)\big\} (w_{lh+h}-w_{\tau_i}) \notag
\\
&+ \sigma(x_{lh}^h) \sum_{i=N(lh,Z)+1}^{N(lh+h,Z)} \frac{\partial \gamma(x_{lh}^h, z_i)}{\partial x^u} (w_{\tau_i}-w_{lh})  \notag
\\
&+\sum_{j=N(lh,Z)+1}^{N(lh+h,Z)} \sum_{i=N(lh,Z)+1}^{N(\tau_j,Z)} \big\{ \gamma(x_{lh}^h+\gamma(x_{lh}^h, z_i), z_j)-\gamma(x_{lh}^h, z_j) \big\}
\end{align}
for $l=0,\ldots, 2^{n}-1$. The multi-dimensional case of scheme \eqref{eq:milstein:modif} with mark-dependent jump coefficient is computationally difficult to simulate because one requires to keep track of the jump times in each sub-interval. However, the mark-independent case i.e. when $\gamma(x,z)=\gamma(x)$ for any $x \in \mathbb{R}^d$ and $z\in Z$, can be simplified by imposing diffusion and jump commutative conditions as respectively given below,
\begin{align} \label{eq:sim:comutative}
\sum_{u=1}^{d} \sigma^{(u,j)}(x) \frac{\partial \sigma^{(i,k)}(x)}{\partial x^u} =\sum_{u=1}^{d} \sigma^{(u,k)}(x) \frac{\partial \sigma^{(i,j)}(x)}{\partial x^u}
\end{align}
for any $k,j=1,\ldots,m$, $i=1,\ldots,d$ and $x \in \mathbb{R}^d$, and
\begin{align} \label{eq:jum:comm}
\sigma^{(k,j)}(x+\gamma(x))-\sigma^{(k,j)}(x)= \sum_{u=1}^d \frac{\partial \gamma^k(x)}{\partial x^u} \sigma^{(u,j)}(x)
\end{align}
for any $x \in \mathbb{R}^d$, $k=1,\ldots,d$ and $j=1,\ldots,m$. A detailed discussion on commutative conditions can be found in \cite{platen2010}. Thus, the tamed Milstein scheme \eqref{eq:milstein:modif} of SDE \eqref{eq:sde:modi} with diffusion and jump coefficients satisfying commutative conditions \eqref{eq:sim:comutative} and \eqref{eq:jum:comm}, can be written as
\begin{align}  \label{eq:milstein:modif:com}
x_{lh+h}^h &=x_{lh}^h  + \tilde{b}^n(x_{lh}^h)h + \sum_{k=1}^m\sigma^{(k)}(x_{lh}^h)\Delta w_{lh}^k + \gamma(x_{lh}^h) \Delta N(lh,Z) \notag
\\
& + \frac{1}{2}\sum_{j,k=1}^m\sum_{u=1}^d   \sigma^{(u,j)}(x_{lh}^h)\frac{\partial \sigma^{(k)} (x_{lh}^h)}{\partial x^u} \{\Delta w_{lh}^j \Delta w_{lh}^k-hI_{\{j=k\}}\} \notag
\\
&+\sum_{j=1}^m \big\{\sigma^{(j)}(x_{lh}^h+\gamma(x_{lh}^h))-\sigma^{(j)}(x_{lh}^h)\big\} \Delta N(lh,Z) \Delta w_{lh}^j \notag
\\
&+\frac{1}{2} \big\{ \gamma(x_{lh}^h+\gamma(x_{lh}^h))-\gamma(x_{lh}^h) \big\} \{(\Delta N(lh,Z))^2- \Delta N(lh,Z)\}
\end{align}
where $\Delta N(lh,Z)=N(lh+h,Z)-N(lh,Z)$ (i.e. the number of jumps in $[lh,lh+h]$) for any $l=0,\ldots,2^n-1$. Also, notice that when assumption \eqref{eq:gam:mean} does not hold, one can implement the scheme \eqref{eq:milstein:modif} by replacing the drift coefficient $b^n(x)$ with $b^n(x)-\int_Z \gamma(x,z)\nu(dz)$.

\section*{Acknowledgments} This work has made use of the resources provided by the Edinburgh Compute and Data Facility (ECDF) \url{http://www.ecdf.ed.ac.uk/}. All the simulations included in this article are performed in c programming language using MPI parallel system.



\begin{thebibliography}{100}
\bibitem{bruti2007} Bruti-Liberati, N. and Platen, E. (2007). Strong approximations of stochastic differential equations with jumps. \emph{Journal of Computational and Applied Mathematics}. 205(2) 982-1001.  MR2329671

\bibitem{cont2004} Cont, R. and Tankov, P. (2004). \emph{Financial Modelling with Jump Processes}. Chapman and Hall/CRC, Florida, USA.  MR2042661

\bibitem{konstantinos2014} Dareiotis, K., Kumar C. and Sabanis, S. (2015). On Tamed Euler Approximations of SDEs Driven by L\'evy Noise with Applications to Delay Equations. \url{arXiv:1403.0498v2 [math.PR]}.

\bibitem{Dereich} Dereich, S. (2011). Multilevel Monte Carlo algorithms for L\'evy-driven SDEs with Gaussian correction. \emph{Annals of Applied Probability}. 21(1) 283--311. MR2759203

\bibitem{Dereich_Heidenreich} Dereich, S. and Heidenreich, F. (2011). A multilevel Monte Carlo algorithm for L\'evy-driven stochastic differential equations. \emph{Stochastic Processes and their Applications}. 121(7) 1565--1587. MR2802466

\bibitem{gyongy1980} Gy\"{o}ngy, I. and Krylov, N. V. (1980). On Stochastic Equations with Respect to Semi-martingales I. \emph{Stochastics}. 4 1-21.  MR0587426

\bibitem{higham2005} Higham, D. J. and   Kloeden, P. E. (2005). Numerical methods for non-linear stochastic differential equations with jumps. \emph{Numerische Mathematik}. 110(1) 101-119. MR2194720

\bibitem{higham2006} Higham, D. J. and   Kloeden, P. E. (2006). Convergence and Stability of Implicit Methods for Jump-diffusion Systems. \emph{International Journal of Numerical Analysis and Modelling}. 3(2) 125-140. MR2237621

\bibitem{HJ} Hutzenthaler, M. and Jentzen, A. (2015). Numerical approximations of stochastic differential equations with non-globally Lipschitz continuous coefficients. Memoirs of the American Mathematical Society. 236 (1112). MR3364862

\bibitem{hutzenthaler2014} Hutzenthaler, M. and Jentzen, A. (2014). On a perturbation theory and on strong convergence rates for stochastic ordinary and partial differential equations with non-globally monotone coefficients. \url{arXiv:1401.0295 [math.PR]}.

\bibitem{hutzenthaler2010} Hutzethaler, M., Jentzen, A. and Kloeden, P. E. (2010). Strong and weak divergence in finite time of Euler's method for stochastic differential equations with non-globally Lipschitz continuous coefficients. \emph{Proceedings of the Royal Society A}. 467 (2130) 1563-1576. MR2795791

\bibitem{hutzenthaler2012} Hutzenthaler, M., Jentzen, A. and Kloeden, P. E.  (2012). Strong convergence of an explicit numerical method for SDEs with nonglobally Lipschitz continuous coefficients. \emph{The Annals of Applied Probability}. 22(4) 1611-1641. MR2985171


\bibitem{jacod2005} Jacod, J., Kurtz, Thomas G., M\'{e}l\'{e}ard, S. and Protter, P. (2005). The approximate Euler method for L\'{e}vy driven stochastic differential equations. \emph{Ann. I. H. Poincar\'{e}-PR}. 41 523-558.  MR2139032

\bibitem{kloeden1999} Kloeden, P. E.  and Platen, E. (1999). \emph{Numerical Solution of Stochastic Differential Equations}.  Applications of Mathematics, Springer, Berlin.  MR1214374

\bibitem{Mik} Mikulevicius, R. and Pragarauskas, H. (2012). On $\mathcal{L}_p$-estimates of some singular integrals related to jump processes. \emph{SIAM J. Math. Anal.} 44(4) 2305-2328.  MR3023377

\bibitem{oksendal2005} \O ksendal, B.  and  Sulem, A. (2007). \emph{Applied Stochastic Control of Jump Diffusions}, 2nd edition, Springer, Berlin.  MR2322248

\bibitem{platen2010} Platen, E. and Bruti-Liberati, N. (2010). \emph{Numerical Solution of Stochastic Differential Equations with Jumps in Finance}, Springer-Verlag, Berlin. MR2723480

\bibitem{yor} Revuz, D.  and Yor, M. (1999). \emph{Continuous martingales and Brownian motion}, 3rd edition,  Springer-Verlag, Berlin.  MR1725357

\bibitem{sabanis2013} Sabanis, S. (2013). A note on tamed Euler approximations. \emph{Electronic Communications in Probability}. 18 1-10. MR3070913

\bibitem{sabanis2015} Sabanis, S. (2015). Euler approximations with varying coefficients: the case of superlinearly growing diffusion coefficients.  \url{ arXiv:1308.1796v3 [math.PR]}, to appear in  \emph{Annals of Applied Probability}.

\bibitem{situ2005} Situ, R. (2005). \emph{Theory of Stochastic Differential Equations with Jumps and Applications}: Mathematical and Analytical Techniques with Applications to Engineering, Springer, New York. MR2160585

\bibitem{Tretyakov-Zhang} Tretyakov, M.V.  and  Zhang, Z. (2013). A fundamental mean-square convergence theorem for SDEs with
locally Lipschitz coefficients and its applications, \emph{SIAM Journal of Numerical Analysis}. 51  3135--3162. MR3129758

\bibitem{wanga2013}  Wang, X. and  Gan, S. (2013). The tamed Milstein method for commutative stochastic differential equations with non-globally Lipschitz continuous coefficients. \emph{Journal of Difference Equations and Applications}. 19(3). 466-490. MR3037286

\bibitem{zhang2014} Zhang, Z. (2014). New explicit balanced schemes for SDEs with locally Lipschitz coefficients. \url{arXiv:1402.3708 [math.NA]}.

\end{thebibliography}
\end{document}